\documentclass[12pt,reqno, final]{amsart}

\usepackage{amssymb,amsmath,graphicx,
	amsfonts,euscript}
\usepackage{color}
\usepackage{cite}
\allowdisplaybreaks

\setlength{\textheight}{8.8in} \setlength{\textwidth}{5.9in}
\setlength{\oddsidemargin}{0.2in} \setlength{\evensidemargin}{0.2in}
\setlength{\parindent}{0.2in}
\setlength{\topmargin}{0.2in} \setcounter{section}{0}
\setcounter{figure}{0} \setcounter{equation}{0}

\newcommand{\be}{\begin{eqnarray}}
\newcommand{\ee}{\end{eqnarray}}
\newcommand{\bes}{\begin{eqnarray*}}
\newcommand{\ees}{\end{eqnarray*}}

\newcommand{\om}{\omega}

\newcommand{\na}{\nabla}

\newcommand{\p}{\partial}

\newtheorem{thm}{Theorem}[section]

\newtheorem{lem}{Lemma}[section]

\setlength{\arraycolsep}{0.5mm}

\newcommand{\beq}{\begin{equation}}
\newcommand{\eeq}{\end{equation}}
\newcommand{\ben}{\begin{eqnarray}}
\newcommand{\een}{\end{eqnarray}}
\newcommand{\beno}{\begin{eqnarray*}}
\newcommand{\eeno}{\end{eqnarray*}}

\voffset=-0.2in
\numberwithin{equation}{section}
\subjclass[2010]{35Q35, 35Q86, 76D03, 76D50}
\keywords{Boussinesq equations; Hydrostatic balance; Partial dissipation; Stability}

\begin{document}
	
	\title[ANISOTROPIC 2D BOUSSINESQ SYSTEM]{STABILITY AND LARGE-TIME BEHAVIOR FOR THE 2D BOUSSINESQ SYSTEM WITH VERTICAL DISSIPATION AND HORIZONTAL THERMAL DIFFUSION}
	
	\author[O. Ben Said and M. Ben Said]{Oussama Ben Said$^{1}$ and Mona Ben Said$^{2}$}
	
	\address{$^1$ Department of Mathematics, Oklahoma State University, Stillwater, OK 74078, USA}
	
	\email{obensai@ostatemail.okstate.edu}

	\address{$^{2}$ Laboratoire Analyse, Géométrie et Applications, Université Paris 13, 99 Avenue Jean Baptiste Clément
93430 Villetaneuse, France}
	
	\email{bensaid@univ-paris13.fr }

	\vskip .2in
	\begin{abstract}
This paper addresses the stability and large-time behavior problem on the perturbations near the hydrostatic balance of the two dimensional Boussinesq system, taking into account vertical dissipation and horizontal thermal diffusion. The spatial framework $\Omega$ is defined as $ \mathbb{T}\times\mathbb{R}$, where $\mathbb{T}$ spans $[0, 1]$, representing the 1D periodic box, while $\mathbb{R}$ denotes the whole line. The results outlined in this article confirm the fact that the temperature can actually have a stabilizing effect on the buoyancy-driven fluids. The stability and long-time behavior issues discussed here are difficult due to the lack of the horizontal dissipation and vertical thermal diffusion. By formulating in the appropriate energy functional and implementing the orthogonal decomposition of the velocity and the temperature into their horizontal averages and oscillation parts, we are able to make up for the missing regularization and establish the nonlinear stability in the Sobolev space $H^2(\Omega)$ and acheive the algebraic decay rates for the oscillation parts in the $H^1$-norm.

	\end{abstract}

	\maketitle

\section{Introduction}
This paper focuses on the following 2D anisotropic Boussinesq system 
\beq \label{y}
\begin{cases}
	\partial_t U + U\cdot \nabla U= -\nabla {P}+ \nu\, \partial_{22}U + g_0\Theta {\mathbf e}_2, 
	\quad x\in\Omega, \,\,t>0, \\
	\partial_t \Theta + U \cdot \nabla \Theta = \eta\, \partial_{11}\Theta, \\
	\nabla \cdot U=0, 
\end{cases}
\eeq
where $U$ denotes the fluid velocity, $P$ the pressure, $\Theta$ the temperature, $\nu>0$ and $\eta>0$ are parameters representing the kinematic viscosity and the thermal diffusivity,
respectively. Here ${\mathbf e}_2=(0,1)$ 
is the unit vector in the vertical direction, $g_0$ is a non zero constant and the spatial domain $\Omega$ is taken to be
$$
\Omega=\mathbb{T}\times\mathbb{R},
$$
with $\mathbb{T} = [0,1]$ being a 1D periodic box and $\mathbb{R}$ being the whole line. This partially dissipated system 
models anisotropic buoyancy-driven fluids in the circumstance when the horizontal dissipation and the vertical thermal diffusion are negligible \cite{Ped}.
\vskip .1in

The Boussinesq systems stand out as the most commonly employed models for studying atmospheric and oceanographic flows (see, e.g., \cite{Blu}, \cite{Hol}, \cite{Maj}). Recent research has been focused on addressing two fundamental challenges related to these equations, namely, global existence and regularity
problem and the stability problem on perturbations
near various physically relevant equilibrium states (see, e.g., \cite{BPW}, \cite{ABPW}, \cite{CCL},\cite{DWZZ},  \cite{DWXZ}, \cite{Kis}, \cite{Luk1},\cite{Luk2}, \cite{Luk3},  \cite{Lad}, \cite{Li}, \cite{LL},\cite{LWXZZ}, \cite{LWZ}, \cite{TWZZ}).
\vskip .1in 
This work intends to show the $H^2(\Omega)-$stability and examine the the large-time behavior of perturbations near the hydrostatic equilibrium $(U_{he}, \Theta_{he})$ with
$$
U_{he} =0, \quad \Theta_{he} =g_0x_2.
$$
For the velocity $U_{he}$, the momentum equation is fulfilled when the pressure
gradient is balanced by the buoyancy force, namely 
$$
-\na P_{he} + g_0\Theta_{he} \, \mathbf e_2 =0 \quad\mbox{or}\quad P_{he} = \frac12g_0^2 x_2^2. 
$$
To examine the stability problem, we need first to write down the equations for the perturbation $(u, p, \theta)$, where
$$
u = U -U_{he}, \quad p= P- P_{he} \quad\mbox{and} \quad \theta = \Theta- \Theta_{he}. 
$$
It is evident from equations (\ref{y}) that $(u, p, \theta)$ satisfies the following anisotropic Boussinesq equations with vertical  dissipation and horizontal thermal diffusion
\begin{align}\label{oussama}
    \begin{cases}
      \partial_t u+u\cdot \nabla u= -\nabla p+ \nu\, \partial_{22}u+ g_0\theta \mathbf e_2, 
	\quad x\in\Omega, \,\,t>0, \\
       \partial_t \theta + u \cdot \nabla \theta + g_0u_2= \eta\, \partial_{11} \theta, \\
        \nabla \cdot u=0, \\
        u(x,0) =u_0(x), \quad \theta(x,0) =\theta_0(x).    
    \end{cases}
\end{align}
The difference between the original system (\ref{y}) and the system governing the perturbations (\ref{oussama}) is that the temperature equation in (\ref{oussama}) contains $g_0u_2$. 
Without this extra term, the $L^2$-norm of the velocity
$u$ to (\ref{y}) can grow in time due to the buoyancy forcing term $g_0\theta{\mathbf e}_2$. With even full dissipation and thermal diffusion, as taken in \cite{BrS}, solutions of the 3D Boussinesq equations can actually grow in time. This term 
in (\ref{oussama}) contributes to balancing $g_0\theta{\mathbf e}_2$ in the energy estimates. Consequently, the buoyancy forcing ceases to have a destabilizing impact in (\ref{oussama}). In cases where dissipation is degenerate and is only one-directional as in (\ref{y}), it is not clear how the
solution would behave.
\vskip .1in 
When the spacial domain is the whole space $\mathbb{R}^2$, the lack of the horizontal dissipation complicates the control of the growth of the vorticity $\om = \na\times u$, satisfying
\beq\label{vor}
\p_t \om + u\cdot\na \om = \nu\, \p_{22} \om + g_0\p_1 \theta, \quad x\in \mathbb R^2, \, \,t>0.
\eeq
More precisely, it is feasible to derive a uniform bound on the $L^2$-norm of the vorticity $\om$ itself. Nonetheless, controlling the $L^2$-norm of the gradient of the vorticity, $\na\om$, does not seem achievable. In particular, if the temperature $\theta$ is zero, (\ref{vor}) reduces to the 2D Navier-Stokes equation with degenerate dissipation, 
\beq\label{vor0}
\p_t \om + u\cdot\na \om = \nu\, \p_{22} \om, \quad x\in \mathbb R^2, \, \,t>0.
\eeq
While (\ref{vor0}) always has a unique global solution $\om$ for any initial data $\om_0\in H^1(\mathbb R^2)$, the question of whether $\|\nabla \omega(t)\|_{L^2}$ for the solution $\omega$ of (\ref{vor0}) grows with respect to $t$ remains an open problem.

Furthermore, when there is no dissipation at all, namely when $\nu=0$, (\ref{vor0}) takes the form of the 2D Euler vorticity equation 
\begin{align}
\p_t \om + u\cdot\na \om =0, \quad x\in \mathbb R^2, \, \,t>0.\label{21}
\end{align}
As pointed out in many works (see, e.g., \cite{Den}, \cite{Kis}, \cite{Zla}), $\na\om(t)$ of (\ref{21}) can grow even double exponentially in time. Particularly, the 
velocity of the 2D Euler equations in the Sobolev space $H^2$ is not stable. Conversely, solutions to the 2D Navier-Stokes equations with full dissipation 
$$
\p_t \om + u\cdot\na \om = \nu\, \Delta \om, \quad x\in \mathbb R^2, \, \,t>0
$$
decays algebraically in time, as shown by Schonbek (see, e.g., \cite{Sch0}, \cite{Sch}). The abscence of the horizontal dissipation in (\ref{vor0}) hinders our ability to follow the approach used for the fully dissipative 
Navier-Stokes equations. Specifically, when applying the energy method to bound $\|\na \om(t)\|_{L^2}$, namely
\begin{align}
\frac12\,\frac{d}{dt} \|\na \om(t)\|_{L^2}^2 +  \nu \|\p_2 \na \om(t)\|_{L^2}^2 = - \int \na \om \cdot\na u\cdot \na \om\, dx, \label{(1.4)}
\end{align}
the one-directional dissipation is not enough to control the nonlinearity. The challenge lies in acquiring a suitable upper bound for the term on the right-hand side of (\ref{(1.4)}). To effectively leverage the anisotropic dissipation, we naturally decompose this term further into four component terms.
\ben
\int \na \om \cdot\na u\cdot \na \om\, dx &=& \int \p_1 u_1 \, (\p_1 \om)^2\,dx + \int \p_1 u_2\, \p_1 \om\, \p_2 \om\,dx \label{hard}\\
&& + \int \p_2 u_1 \, \p_1 \om\, \p_2 \om\,dx + \int \p_2 u_2\, (\p_2\om)^2\,dx. \notag
\een 
Without horizontal dissipation, establishing a time-integrable upper bound for the first two terms in (\ref{hard}) is not possible. 

\vskip .1in 
When dealing with the stability problem on (\ref{oussama}), we come across the same nonlinear term presented in (\ref{hard}). Fortunately, the smoothing and stabilization effect of the temperature through the coupling and interaction allows us to solve the stability problem in (\ref{oussama}). To reveal these effects, we start by eliminating the 
pressure term in (\ref{oussama}). Applying 
the Helmholtz-Leray projection $\mathbb P = I - \na\Delta^{-1}\na\cdot$ to the velocity equation in (\ref{oussama}), we get 
\beq\label{j8}
\p_t u = \nu \p_{22} u + \mathbb P (g_0\theta \mathbf e_2) -\mathbb P(u\cdot\na u).
\eeq
Using the definition of the Leray projection $\mathbb P$, 
\beq\label{j9}
\mathbb P (g_0\theta \mathbf e_2) = g_0\theta \mathbf e_2 - \na \Delta^{-1} \na\cdot (g_0\theta \mathbf e_2) = g_0\left[ \begin{array}{c} -\p_1 \p_2 \Delta^{-1} \theta \\ \theta - \p_2^2 \Delta^{-1} \theta\end{array}\right].
\eeq
Substituting (\ref{j9}) into (\ref{j8}) and expressing (\ref{j8}) in terms of its component equations, yields 
\beq\label{newu}
\begin{cases}
	\p_t u_1 = \nu \, \p_{22} u_1 - g_0\p_1\p_2 \Delta^{-1} \theta + N_1, \\
	\p_t u_2 = \nu \, \p_{22} u_2 + g_0\p_1\p_1 \Delta^{-1} \theta + N_2,
\end{cases}
\eeq
where $N_1$ and $N_2$ represent the nonlinear terms, 
$$
N_1 = -( u\cdot\na u_1 -\p_1 \Delta^{-1} \na\cdot(u\cdot\na u)), \quad 
N_2 = -( u\cdot\na u_2 -\p_2 \Delta^{-1} \na\cdot(u\cdot\na u)).
$$
Differentiating the first equation of (\ref{newu}) with respect to $t$, we get
$$
\p_{tt} u_1 = \nu \p_{22} \p_t u_1- g_0\p_1\p_2 \Delta^{-1} \p_t \theta + \p_t N_1.
$$
Using the equation of $\theta$ in (\ref{oussama}), we substitute $\p_t \theta$ in the above equation with $\eta \, \p_{11} \theta - g_0u_2 - u\cdot\na \theta$ to write
$$
\p_{tt} u_1 = \nu \p_{22} \p_t u_1 + g_0^2\p_1\p_2 \Delta^{-1} u_2 -g_0\eta \, \p_{11} \p_1\p_2\Delta^{-1} \theta + g_0\p_1 \p_2 \Delta^{-1}(u\cdot\na \theta) + \p_t N_1.
$$
Additionally, replacing $g_0\p_1\p_2\Delta^{-1} \theta$ by the first component equation of (\ref{newu}), namely
$$
- g_0\p_1\p_2 \Delta^{-1} \theta = \p_t u_1 -\nu \,\p_{22} u_1-N_1,
$$
we find 
\beno
\p_{tt} u_1 &=& \nu \p_{22} \p_t u_1 + g_0^2\p_1\p_2 \Delta^{-1} u_2 + \eta \, \p_{11} (\p_t u_1 -\nu \,\p_{22} u_1-N_1)\\
&& + g_0\,\p_1 \p_2 \Delta^{-1}(u\cdot\na \theta) + \p_t N_1,
\eeno
which in turn gives, due to the divergence-free condition $\p_2 u_2 = -\p_1 u_1$, 
\beq\label{u1t}
\p_{tt} u_1 -( \eta \p_{11} + \nu \p_{22})  \p_t u_1 + \nu \eta \p_{11} \p_{22} u_1 + g_0^2\p_{11}\Delta^{-1} u_1 = N_3,
\eeq
where $N_3$ is the nonlinear term, 
$$
N_3=(\p_t  -\eta \p_{11}) N_1 + g_0\p_1 \p_2 \Delta^{-1}(u\cdot\na \theta).
$$
Following the same procedure, we can easily show that $u_2$ and $\theta$ satisfy 
\ben
&& \p_{tt} u_2 -( \eta \p_{11} + \nu \p_{22})  \p_t u_2 + \nu \eta \p_{11} \p_{22} u_2 + g_0^2\p_{11}\Delta^{-1} u_2 = N_4, \label{u2t}\\
&& \p_{tt} \theta -( \eta \p_{11} + \nu \p_{22})  \p_t \theta + \nu \eta \p_{11} \p_{22} \theta + g_0^2\p_{11}\Delta^{-1} \theta = N_5 \notag
\een
with 
\beno
&& N_4 = (\p_t -\eta \p_{11}) N_2 - g_0\p_1 \p_1 \Delta^{-1}(u\cdot\na \theta),\\
&& N_5 = -(\p_t-\nu \p_{22})(u\cdot\na \theta) -g_0N_2.
\eeno 
Then, merging (\ref{u1t}) and (\ref{u2t}) and expressing them into the velocity vector form, we have reformulated (\ref{oussama}) into the following new system 
\beq\label{bb22}
\begin{cases}
	\p_{tt} u -( \eta \p_{11} + \nu \p_{22})  \p_t u + \nu \eta \p_{11} \p_{22} u + g_0^2\p_{11}\Delta^{-1} u = N_6,\\
	\p_{tt} \theta -( \eta \p_{11} + \nu \p_{22})  \p_t \theta + \nu \eta \p_{11} \p_{22} \theta + g_0^2\p_{11}\Delta^{-1} \theta = N_5,
\end{cases}
\eeq
where 
$$
N_6=(N_3, N_4) = -(\p_t -\eta \p_{11})\mathbb P(u\cdot\na u) + g_0\na^\perp \p_1 \Delta^{-1}(u\cdot\na \theta)
$$
with $\na^\perp =(\p_2, -\p_1)$. By applying the curl of the velocity equation, we can likewise transform (\ref{bb2}) into a system of $\om$ and $\theta$, 
$$
\begin{cases}
	\p_{tt} \om -( \eta \p_{11} + \nu \p_{22})  \p_t \om + \nu \eta \p_{11} \p_{22} \om + g_0^2\p_{11}\Delta^{-1} \om = N_7,\\
	\p_{tt} \theta -( \eta \p_{11} + \nu \p_{22})  \p_t \theta + \nu \eta \p_{11} \p_{22} \theta + g_0^2\p_{11}\Delta^{-1} \theta = N_5,
\end{cases}
$$
with
$$
N_7 = - (\p_t -\eta \p_{11})(u\cdot\na \om) -g_0\p_1 (u\cdot\na \theta).
$$
Remarkably, we observe that all physical quantities $u, \theta$ and $\om$ obey the same damped degenerate wave 
equation, differing only in their respective nonlinear terms. Compared to the original system of $(u,\theta)$ in (\ref{oussama}), the wave equations (\ref{bb22}) reveal the underlying smoothing and stabilization hidden in (\ref{oussama}). In (\ref{oussama}), where horizontal dissipation is absent in the velocity field, the wave structure implies that the temperature can stabilize the fluids by creating the horizontal regularization through the coupling and interaction. By taking advantage of these effects, the stability problem on (\ref{oussama}) was recently established by Ben Said and al in \cite{BPW} when the spacial domain is the whole plane $\mathbb{R}^2$. However, the large time behaviour of the solution in  $\mathbb{R}^2$ remains a mystery. When the spatial domain is $\Omega = \mathbb{T}\times\mathbb{R},$ this paper also proves the stability of (\ref{oussama}). Additionally, we analyze the large-time behavior of the solutions.
The core idea involves breaking down a physical quantity into its horizontal average and the associated oscillation. Specifically, for a function $f = f(x_1, x_2)$ defined 
on $\mathbb{T}\times \mathbb{R}$ and integrable in $x_1$ over the 1D periodic box $\mathbb{T} = [0,1]$, we define its horizontal average $\overline{f}$ by
\begin{align}
	\overline{f}(x_2)=\int_{\mathbb{T}}f(x_1,x_2)dx_1,  \label{b}
\end{align}
and we write,
\begin{align}
	f=\overline{f}+\widetilde{f}.\label{bbs}
\end{align}
Note here that, the horizontal average $\overline{f}$ corresponds to the zeroth Fourier mode of f while $\widetilde{f}$
contains all non-zero Fourier modes. 

The decomposition (\ref{bbs}) possesses distinct properties. To begin with, this decomposition is orthogonal
in the Sobolev space $H^k(\Omega)$ for any 
non-negative integer. This implies that the $H^k-$norms of $\overline{f}$ and $\widetilde{f}$
are bounded by the $H^k-$norm of $f$. Furthermore, this decomposition commutes with derivatives,
and $\overline{f}$ and $\widetilde{f}$ of a divergence-free vector field $f$ are also divergence-free. An essential property to
be frequently used in our estimates is that $\widetilde f$ admits strong versions of the Poincar\'{e} type 
inequality
$$
\|\widetilde{f}\|_{L^2(\Omega)} \le C\, \|\p_1 \widetilde{f}\|_{L^2(\Omega)}, \quad \|\widetilde{f}\|_{L^\infty(\Omega)} \le C\, \|\p_1 \widetilde{f}\|_{H^1(\Omega)}.
$$ 
Applying this decomposition to the velocity field $u$ and the temperature $\theta$, namely writing $$
u=\overline{u}+\widetilde{u}, \quad \theta=\overline{\theta}+\widetilde{\theta}
$$ and exploiting the aforementioned properties we can effectively handle the nonlinear terms in (\ref{hard}) appropriately, even when there is only vertical dissipation.
More precisely, the following theorems hold. Theorem \ref{TH} establishes the $H^2$-stability while Theorem \ref{TH1} specifies the decay rates of the oscillation part $(\widetilde u, \widetilde \theta)$. 
\begin{thm}\label{TH}
	Let $\mathbb{T}=[0,1]$ be a 1D periodic box and let $\Omega=\mathbb{T}\times\mathbb{R}$. Assume $u_0,\theta_0\in H^2(\Omega)$ and $\nabla\cdot u_0 = 0.$ Then there exists $\varepsilon = \varepsilon(\nu,\eta)>0$ such that, if 
	\begin{align*}
		\|u_0\|_{H^2}+\|\theta_0\|_{H^2}\le \varepsilon,
	\end{align*}
	then (\ref{oussama}) has a unique global solution $(u, \theta)$ that remains uniformly bounded for all time, for any $t\ge0$,
	\begin{align*}
		\|u(t)\|_{H^2}^2+\|\theta(t)\|_{H^2}^2&+2\nu\int_0^t\|\partial_{2}u(\tau)\|_{H^2}^2d\tau\\&+2\eta\int_0^t\|\partial_{1}\theta(\tau)\|_{H^2}^2d\tau + C(\nu,\eta)\int_0^t\|g_0\partial_1u_2\|_{L^2}^2d\tau\le C\varepsilon^2
	\end{align*}
where $C(\nu,\eta)$ and $C>0$ are constants.
\end{thm}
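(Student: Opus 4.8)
The plan is to pair a standard local well-posedness result in $H^2(\Omega)$ with a bootstrap (continuation) argument: if the $H^2$-norm of the solution stays below a threshold fixed by $\varepsilon$ throughout its maximal interval of existence, the solution cannot blow up and hence is global. Concretely, I would work with the energy
\[
\begin{aligned}
E(t) = {} & \sup_{0\le\tau\le t}\left(\|u(\tau)\|_{H^2}^2 + \|\theta(\tau)\|_{H^2}^2\right) + 2\nu\int_0^t\|\p_2 u\|_{H^2}^2\,d\tau \\
& + 2\eta\int_0^t\|\p_1\theta\|_{H^2}^2\,d\tau + C(\nu,\eta)\int_0^t\|g_0\p_1 u_2\|_{L^2}^2\,d\tau ,
\end{aligned}
\]
and aim at the a priori inequality $E(t)\le C\,E(0) + C\,E(t)^{\frac32}$ valid as long as the solution exists. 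Since $E(0)\le \|u_0\|_{H^2}^2+\|\theta_0\|_{H^2}^2\le\varepsilon^2$, a continuity argument then forces $E(t)\le C\varepsilon^2$ uniformly in time, which simultaneously rules out finite-time blow up and delivers the asserted bound.

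For the estimates themselves I would first record the basic $L^2$ balance. Testing the velocity equation of (\ref{oussama}) with $u$ and the temperature equation with $\theta$, the two buoyancy contributions $g_0\int\theta u_2$ and $-g_0\int u_2\theta$ cancel exactly, leaving
\[
\tfrac12\tfrac{d}{dt}\left(\|u\|_{L^2}^2+\|\theta\|_{L^2}^2\right) + \nu\|\p_2 u\|_{L^2}^2 + \eta\|\p_1\theta\|_{L^2}^2 = 0 .
\]
The same cancellation persists at the top order: applying $\p^\alpha$ with $|\alpha|\le 2$ (equivalently working with $\om$ and $\na\theta$, since $u$ is divergence free), testing and summing gives
\[
\tfrac12\tfrac{d}{dt}\left(\|u\|_{H^2}^2+\|\theta\|_{H^2}^2\right) + \nu\|\p_2 u\|_{H^2}^2 + \eta\|\p_1\theta\|_{H^2}^2 = -\sum_{|\alpha|\le2}\int\p^\alpha(u\cdot\na u)\cdot\p^\alpha u - \sum_{|\alpha|\le2}\int\p^\alpha(u\cdot\na\theta)\,\p^\alpha\theta .
\]
Separately I would manufacture the extra horizontal dissipation on $u_2$. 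Differentiating the temperature equation in $x_1$ gives $g_0\p_1 u_2 = -\p_t\p_1\theta + \eta\,\p_{111}\theta - \p_1(u\cdot\na\theta)$; pairing with $g_0\p_1 u_2$, the time-derivative term yields $-\tfrac{d}{dt}\int g_0\p_1\theta\,\p_1 u_2$, and after substituting $\p_t u_2$ from (\ref{newu}) the remaining linear terms are controlled, via Young's inequality with a small parameter, by $\nu\|\p_2 u\|_{H^2}^2$ and $\eta\|\p_1\theta\|_{H^2}^2$. The boundary quantity $g_0\int\p_1\theta\,\p_1 u_2$ is bounded by $C\big(\|u\|_{H^2}^2+\|\theta\|_{H^2}^2\big)$. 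Adding a sufficiently small multiple $C(\nu,\eta)$ of this identity to the $H^2$ balance reproduces precisely the dissipation structure recorded in the theorem.

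The crux, and the step I expect to be the main obstacle, is bounding the nonlinear terms, whose worst pieces are exactly those in (\ref{hard})—in particular $\int\p_1 u_1\,(\p_1\om)^2$ and $\int\p_1 u_2\,\p_1\om\,\p_2\om$—which carry no vertical-derivative factor and thus cannot be absorbed by $\nu\|\p_2 u\|_{H^2}^2$ alone. To treat them I would use the decomposition $f=\overline f+\widetilde f$ with its orthogonality together with the Poincaré inequalities $\|\widetilde f\|_{L^2}\le C\|\p_1\widetilde f\|_{L^2}$ and $\|\widetilde f\|_{L^\infty}\le C\|\p_1\widetilde f\|_{H^1}$. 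Splitting each factor into average and oscillation, the purely-average pieces, being $x_1$-independent, are handled through the vertical dissipation and the structure of the $\overline u,\overline\theta$ equations, while every oscillatory factor trades a horizontal derivative for an extra $\p_1$ that is then absorbed by $\eta\|\p_1\theta\|_{H^2}^2$ or by the newly gained $\|g_0\p_1 u_2\|_{L^2}^2$. The divergence-free identity $\p_1 u_1=-\p_2 u_2$ is invoked to rewrite the genuinely dangerous $\p_1 u_1$ factor as the vertical derivative $\p_2 u_2$, which the $\nu$-dissipation does control, and anisotropic (Ladyzhenskaya-type) Sobolev inequalities furnish the remaining $L^4$ and $L^\infty$ bounds. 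Each term is thereby dominated by a cubic expression $C\,E(t)^{\frac32}$, so that integrating the combined energy–dissipation identity in time produces $E(t)\le C\,E(0)+C\,E(t)^{\frac32}$. Choosing $\varepsilon$ small relative to $\nu,\eta$ and running the bootstrap closes the estimate and gives the uniform-in-time $H^2$ bound; uniqueness in the $H^2$ class follows from the usual energy estimate on the difference of two solutions, completing the proof.
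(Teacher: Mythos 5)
Your proposal is correct and follows essentially the same route as the paper: the same energy functional augmented by $\delta\int_0^t\|g_0\partial_1 u_2\|_{L^2}^2\,d\tau$ gained by solving for $g_0\partial_1 u_2$ from the temperature equation (with the pressure/$\partial_t u_2$ substitution handled exactly as you describe), the same horizontal-average/oscillation decomposition with the strong Poincar\'e and anisotropic Sobolev inequalities to tame the terms in (\ref{hard}) (including the $\partial_1 u_1=-\partial_2 u_2$ trick), and the same bootstrapping argument plus $L^2$ energy estimate on the difference of two solutions for uniqueness. No substantive deviation from the paper's proof.
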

Theorem \ref{TH} states that any small initial perturbation, in the $H^2$-sense, leads to a unique global, in time, solution of (\ref{oussama}) that remains small in $H^2$ for all time t. Furthermore, it implies that the time-integral of $\|\partial_1u_2(\tau)\|_{L^2}^2$ is finite.
 
 \vskip .1in 
The following Theorem asserts that the oscillation portion $(\widetilde{u},\widetilde{\theta})$ decays to zero algebraically in time in the $H^1$-norm. This result aligns with the stratification phenomenon of buoyancy
driven fluids. Additionally, it affirms the observation derived from the numerical simulations presented in \cite{DWZZ}, indicating that the temperature becomes horizontally
homogeneous and stratify in the vertical direction over time.
\begin{thm}\label{TH1} Let $u_0, \theta_0\in H^{2}(\Omega)$ with $\nabla\cdot u_0=0\,.$ Assume that $(u_0,\theta_0)$ satisfies \begin{align*}
		\|u_0\|_{H^2}+\|\theta_0\|_{H^2}\le \varepsilon,
	\end{align*}for sufficiently small $\varepsilon>0$. Let $(u,\theta)$ be the corresponding solution of (\ref{oussama}) with $g_0$ negative constant. Then the oscillation part $(\widetilde{u},\widetilde{\theta})$ satisfies the following algebraic decay in time,
	\begin{align*}
		\|\widetilde{u}\|_{H^1}+\|\widetilde{\theta}\|_{H^1} \le c (1 +t)^{-\frac{1}{2}},
	\end{align*}
	for some constant $c>0$ and for all $t\ge0.$ In addition, $(\widetilde{u},\widetilde{\theta})$ has
	the asymptotic behavior, as $t\to \infty$, 
	$$
	t\, (\|\widetilde{u}(t)\|_{H^1}^2+\|\widetilde{\theta}(t)\|_{H^1}^2) \to 0. 
	$$
\end{thm}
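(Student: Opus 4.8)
The plan is to reduce the $H^1$ decay to a lower-order decay by interpolating against the uniform $H^2$ bound of Theorem \ref{TH}, and then to extract that lower-order decay from a time-weighted energy estimate for the oscillation system, in which the Poincar\'e inequality and the divergence-free condition are the decisive tools. Two preliminary remarks streamline the argument. First, averaging the second momentum equation together with the divergence-free condition over $\mathbb T$ forces $\p_2\overline{u_2}=0$; since $\overline{u_2}(x_2)$ is then an $x_2$-independent function lying in $L^2(\R)$, it must vanish, so that $u_2=\widetilde{u_2}$ is purely oscillatory. Second, the interpolation inequality $\|\widetilde f\|_{H^1}^2\le C\,\|\widetilde f\|_{L^2}\,\|\widetilde f\|_{H^2}$ combined with the uniform bound $\|u\|_{H^2}+\|\theta\|_{H^2}\le C\ep$ shows that it suffices to prove the low-order decay
\begin{align*}
\|\widetilde u(t)\|_{L^2}+\|\widetilde\theta(t)\|_{L^2}\le c\,(1+t)^{-1},
\end{align*}
since squaring and interpolating then returns precisely $\|\widetilde u\|_{H^1}+\|\widetilde\theta\|_{H^1}\le c\,(1+t)^{-1/2}$.

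To obtain this low-order decay I would apply the oscillation projection to (\ref{oussama}) and carry out the $L^2$ energy estimate on $(\widetilde u,\widetilde\theta)$. The stabilizing cancellation built into (\ref{oussama}) persists on the oscillation level: the buoyancy contribution $\int g_0\widetilde\theta\,\widetilde{u_2}\,dx$ and the coupling contribution $-\int g_0\widetilde{u_2}\,\widetilde\theta\,dx$ cancel, leaving only $\nu\|\p_2\widetilde u\|_{L^2}^2$ and $\eta\|\p_1\widetilde\theta\|_{L^2}^2$ against the cubic nonlinearities. Here the oscillation structure is used in full: Poincar\'e gives $\|\widetilde\theta\|_{L^2}\le C\|\p_1\widetilde\theta\|_{L^2}$, so horizontal thermal diffusion coercively damps $\widetilde\theta$; and the divergence-free relation $\p_1\widetilde{u_1}=-\p_2\widetilde{u_2}$ with Poincar\'e gives $\|\widetilde{u_1}\|_{L^2}\le C\|\p_1\widetilde{u_1}\|_{L^2}=C\|\p_2\widetilde{u_2}\|_{L^2}$, so the vertical viscosity coercively damps $\widetilde{u_1}$. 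The cubic terms are estimated by the anisotropic product inequalities and absorbed using the smallness $\|(u,\theta)\|_{H^2}\le C\ep$.

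The main obstacle is that the vertical viscosity supplies no coercive control of the vertical velocity oscillation $\widetilde{u_2}$ itself: Poincar\'e can only trade $\|\widetilde{u_2}\|_{L^2}$ for the horizontal-derivative quantity $\|\p_1 u_2\|_{L^2}$, which the basic estimate never produces. This is exactly where the hydrostatic coupling between $u_2$ and $\theta$ must be used, and where the assumption that $g_0$ is negative (stable stratification) enters. I would add to the basic energy a small multiple of a mixed term built from $\widetilde{u_2}$ and $\widetilde\theta$ — the cross energy associated with the stiffness $g_0^2\p_{11}\Delta^{-1}$ in the damped wave reformulation (\ref{bb22}) — whose time derivative, for $g_0<0$, contributes the missing horizontal dissipation of $\widetilde{u_2}$ with the favorable sign. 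The price is that the same time derivative also produces terms carrying one extra derivative, so that the lowest-order and the higher-order energies no longer decouple; this failure of a single clean coercive inequality at the lowest order, together with the fact that the coupling damps high vertical wavenumbers only weakly, is precisely what degrades the rate from exponential to algebraic. The outcome is an equivalent modified functional $\mathcal E_0(t)\approx\|\widetilde u\|_{L^2}^2+\|\widetilde\theta\|_{L^2}^2$ obeying $\tfrac{d}{dt}\mathcal E_0+c\,\mathcal D_0\le R$, where $\mathcal D_0$ now dominates $\mathcal E_0$ and the remainder $R$ is controlled by the higher-order dissipation bounded in Theorem \ref{TH}.

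It remains to convert this into the rate, which I would do by a hierarchy of time-weighted estimates with weights $(1+t)^0,(1+t)^1,(1+t)^2$: at each stage, differentiating the weight once costs a term that is absorbed using the coercivity $\mathcal E_0\le C\mathcal D_0$ for $t$ large, while the weighted remainder is controlled by the bound from the previous, lower, weight together with the dissipation integrals $\int_0^\infty\big(\|\p_2 u\|_{H^2}^2+\|\p_1\theta\|_{H^2}^2+\|g_0\p_1 u_2\|_{L^2}^2\big)\,ds\le C\ep^2$ of Theorem \ref{TH}. The top stage yields $\mathcal E_0(t)\le c\,(1+t)^{-2}$, and the interpolation reduction then gives the claimed $H^1$ rate. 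Finally, for the sharpened statement $t\,(\|\widetilde u\|_{H^1}^2+\|\widetilde\theta\|_{H^1}^2)\to0$, I would observe that Poincar\'e and the divergence-free relation turn the same three dissipation integrals of Theorem \ref{TH} directly into $\int_0^\infty\big(\|\widetilde u(s)\|_{H^1}^2+\|\widetilde\theta(s)\|_{H^1}^2\big)\,ds<\infty$; combined with the uniform bound $(1+t)\big(\|\widetilde u\|_{H^1}^2+\|\widetilde\theta\|_{H^1}^2\big)\le C$ obtained above and the differential inequality bounding the variation of the energy, this forces $t\,(\|\widetilde u\|_{H^1}^2+\|\widetilde\theta\|_{H^1}^2)\to0$ as $t\to\infty$.
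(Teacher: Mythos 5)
Your overall architecture parallels the paper's proof: you form the oscillation system, run energy estimates, damp $\widetilde{u_1}$ and $\widetilde{\theta}$ via Poincar\'e and $\p_1\widetilde{u_1}=-\p_2\widetilde{u_2}$, and repair the missing damping of $\widetilde{u_2}$ with a small cross term coupling $\widetilde{u_2}$ and $\widetilde{\theta}$ (the paper's $-\delta(\widetilde{u_2},\widetilde{\theta})$), whose favorable sign is exactly where $g_0<0$ enters; your observations that $\overline{u_2}=0$ and that Theorem~\ref{TH}'s dissipation integrals imply $\int_0^\infty(\|\widetilde u\|_{H^1}^2+\|\widetilde\theta\|_{H^1}^2)\,dt<\infty$ are also correct. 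The gap is in the step that converts the differential inequality into a rate. Your interpolation reduction $\|\widetilde f\|_{H^1}^2\le C\|\widetilde f\|_{L^2}\|\widetilde f\|_{H^2}$ forces you to prove $\|\widetilde u\|_{L^2}+\|\widetilde\theta\|_{L^2}\le c(1+t)^{-1}$, i.e.\ squared-$L^2$ decay of order $(1+t)^{-2}$, which is strictly stronger than anything the available estimates give (the paper only ever obtains squared-$H^1$ decay of order $(1+t)^{-1}$). To reach it you run a weighted hierarchy with weights $(1+t)$ and $(1+t)^2$, absorbing the weighted remainder "using the dissipation integrals of Theorem~\ref{TH}". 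This step fails: the remainder $R$ in $\frac{d}{dt}\mathcal E_0+c\,\mathcal E_0\le R$ genuinely contains higher-derivative terms that cannot be absorbed at the $L^2$ level — for instance the cross term's time derivative produces $\delta\nu\int\p_{22}\widetilde{u_2}\,\widetilde\theta\,dx$ and $\delta\eta\int\p_{11}\widetilde\theta\,\widetilde{u_2}\,dx$, and the pressure contributes a term bounded only by $\|\p_1\p_2\widetilde\theta\|_{L^2}^2$ — and these are controlled solely by quantities ($\|\p_2 u\|_{H^2}^2$, $\|\p_1\theta\|_{H^2}^2$, $\|g_0\p_1u_2\|_{L^2}^2$) that are known to be time-integrable but carry \emph{no decay rate}. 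From $\int_0^\infty R\,d\tau<\infty$ alone, $\int_0^t(1+s)R(s)\,ds$ can already diverge (take $R(s)\sim s^{-1}(\log s)^{-2}$), so the first stage of your hierarchy collapses; all one can conclude from your inequality is $\mathcal E_0(t)\to 0$ with no rate whatsoever.

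The repair is essentially what you sketch in your final paragraph, promoted to the main argument and run at the $H^1$ level instead of $L^2$: establish the Lyapunov inequality $\frac{d}{dt}\big(\|\widetilde u\|_{H^1}^2+\|\widetilde\theta\|_{H^1}^2-\delta(\widetilde{u_2},\widetilde\theta)\big)+\nu\|\p_2\widetilde u\|_{H^1}^2+\eta\|\p_1\widetilde\theta\|_{H^1}^2-g_0\frac{\delta}{4}\|\widetilde{u_2}\|_{L^2}^2\le 0$ (this requires the gradient-level estimates, which form the bulk of Section~\ref{decay}; the higher-derivative terms above are then absorbed by the $H^1$-level dissipation rather than dumped into a remainder). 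This yields the generalized monotonicity $f(t)\le Cf(s)$ for $s\le t$ with $f=\|\widetilde u\|_{H^1}^2+\|\widetilde\theta\|_{H^1}^2$, which together with your integrability observation feeds into Lemma~\ref{special5}: a nonnegative integrable function that is decreasing in this generalized sense satisfies $f(t)\le C(1+t)^{-1}$ and $t\,f(t)\to 0$. That gives both conclusions of Theorem~\ref{TH1} simultaneously, with no interpolation step and no need for any rate on the dissipation quantities.
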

\vskip .1in 
According to Theorem \ref{TH1}, the solution $(u,\theta)$ of (\ref{oussama}) approaches its horizontal average $(\overline{u},\overline{\theta})$ asymptotically, and eventually, the Boussinesq equations (\ref{oussama}) evolves to the following 1D system
$$
\begin{cases}
	\partial_t\overline{u}+\overline{u\cdot\nabla\widetilde{u}}+\begin{pmatrix}
		0\\\partial_2\overline{p}
	\end{pmatrix}=g_0\begin{pmatrix}
		0\\\overline{\theta}
	\end{pmatrix}+\nu\partial_2^2\overline{u},\\\partial_t\overline{\theta}+\overline{u\cdot\nabla\widetilde{\theta}}=0.
\end{cases}
$$
\vskip .1in 

We briefly outline the proofs for Theorem \ref{TH} and Theorem \ref{TH1}. As the local, in time, well-posedness
on (\ref{oussama}) in the Sobolev setting $H^2(\Omega)$ can be established using standard approaches such as Friedrichs' Fourier cutoff (see, e.g.,
\cite{MaBe}), the proof of Theorem \ref{TH} is essentially reduced to demonstrating the global, in time, \textit{a
priori} bound on the solution in $H^2(\Omega)$. To do so, we make use of the bootstrapping argument (see \cite{Tao}, p 20). To set it up, we introduce the following energy functional for the
$H^2$-solution,
\ben
E(t) &=& \max_{0\leq \tau \leq t} (\|u(\tau)\|_{H^2}^{2} +\|\theta(\tau)\|_{H^2}^{2})  + 2 \nu \int_{0}^{t} \|\partial_2 u\|_{H^2}^{2}d\tau \notag\\
&& + 2\eta \int_{0}^t \|\partial_1 \theta\|_{H^2}^{2} d\tau + \delta \int_{0}^{t}\|g_0\partial_1 u_2\|_{L^2}^{2}\, d\tau, \label{ee}
\een
where $\delta>0$ is a suitably selected small parameter. Our central objective, is to show that, for a constant $C$ uniform and for all $t > 0$,
\beq\label{ine}
E(t) \le C\, E(0) + C\, E(t)^{\frac32}.
\eeq
To prove (\ref{ine}), we should make full use of the extra regularization resulting from the wave structure in (\ref{bb22}). Furthermore, the 
control on the time integral of the horizontal derivative of the velocity field, namely
\beq\label{cru}
\int_0^t \|g_0\p_1 u_2(\tau)\|_{L^2}^2\,d\tau 
\eeq
plays an improtant role our proof. Note here, that the uniform boundedness of (\ref{cru}) is 
not a consequence of the vertical dissipation in the velocity equation but due to the interaction
with the temperature equation. In fact, using the equation of $\theta$ in (\ref{ff}), we represent $g_0\p_1 u_2$ as,
$$
g_0\partial_1 u_2= -\partial_t \partial_1 \theta - \partial_1 (u \cdot \nabla \theta) + \eta \partial_{111} \theta, 
$$
then 
$$
\|g_0\partial_1 u_2\|_{L^2}^{2} = -g_0\int \partial_t \partial_1 \theta\,  \partial_1 u_2\;dx - g_0\int \partial_1 u_2\, \partial_1 (u \cdot \nabla \theta)\;dx + \eta g_0\int \partial_1 u_2\,\partial_{111} \theta\;dx. 
$$
Hence, the time integrability of $\|g_0\partial_1 u_2\|_{L^2}^{2}$ is converted to the time integrability 
of other terms. This phenomenon of extra regularization and time integrability, resulting from the coupling, is also observed in other models of partial differential equations, such as 
the Oldroyd-B system (see \cite{ConW}, \cite{ER}). 

Once (\ref{ine}) is proven, the bootstrapping argument then gives that, if  
$$
E(0)=\|(u_0, \theta_0)\|_{H^2}^2 \le \varepsilon^2
$$
for some sufficiently small $\varepsilon>0$, then $E(t)$ remains uniformly small for all time, 
namely 
\begin{align}
E(t)  \le C\, \varepsilon^2\label{E(t)}
\end{align}
for a constant $C>0$ and for all $t\ge 0$. In particular, (\ref{E(t)}) yields the desired global $H^2$-bound on the
solution $(u,\theta)$.
We leave details on the application of the bootstrapping argument in the proof of Theorem \ref{TH} in Section \ref{ssp}. 
\vskip .1in 
To demonstrate the algebraic decay rates on the $H^1$-norm of the oscillation component, as stated in Theorem \ref{TH1}, we  initially take the difference 
of (\ref{oussama}) and its horizontal average, to write down the system
governing the oscillation part $(\widetilde u, \widetilde \theta)$ 
\begin{align}\label{pp}
	\begin{cases}
		\partial_t\widetilde{u}+\widetilde{u\cdot\nabla\widetilde{u}}+\widetilde{u_2}\partial_2\overline{u}-\nu\partial_{2}^2\widetilde{u}+\nabla \widetilde{p}=g_0\widetilde{\theta}e_2,\\\partial_t\widetilde{\theta}+\widetilde{u\cdot\nabla\widetilde{\theta}}+\widetilde{u_2}\partial_2\overline{\theta}-\eta\partial_{1}^2\widetilde{\theta}+ g_0\widetilde{u_2}=0.
	\end{cases}
\end{align}
Controling the $H^1$-norm of $(\widetilde{u},\widetilde{\theta})$ naturally involves estimating the $L^2-$norms $\|(\widetilde u, \widetilde \theta)\|_{L^2}$ 
and $\|(\na \widetilde u, \na\widetilde \theta)\|_{L^2}$. Here, one major difficulty is that the 
equation of $\widetilde u$ has only vertical dissipation, however, the aforementioned 
Poincar\'{e} inequality can only bound a function in terms of its horizontal derivatives. Consequently, some of the  nonlinear parts  associated with $\widetilde u$ 
can not be bounded suitably and require the upper bounds involving $\|\widetilde u_2\|_{L^2}$. 
To handle these terms, we seek extra smoothing and stabilizing effect on $\widetilde u_2$ 
by exploiting the coupling in (\ref{pp}). Specifically, we introduce the following extra term along 
with the $H^1$-norm to form a Lyapunov functional, 
$$
-\delta (\widetilde u_2, \widetilde \theta),
$$
where $\delta>0$ is a small constant and $(\widetilde u_2, \widetilde \theta)$ denotes the $L^2$-inner 
product. The time derivative of this terme produces $\delta \|\widetilde u_2\|_{L^2}^2$,
which help balance $\|\widetilde u_2\|_{L^2}^2$ from the nonlinearity. Then, applying anisotropic inequalities presented in section \ref{pre}, we demonstrate the following energy inequality.
\begin{align*}
	\frac{d}{dt}\Big(\|\widetilde{u}\|_{H^1}^2 +\|\widetilde{\theta}\|_{H^1}^2-\delta(\widetilde{u_2},\widetilde{\theta})\Big)+\nu\|\partial_2\widetilde{u}\|_{H^1}^2+\eta\|\partial_1\widetilde{\theta}\|_{H^1}^2+\frac{\delta}{4}\|\widetilde{u_2}\|_{L^2}^2\le 0, 
\end{align*}
resulting the desired algebraic decay stated in Theorem \ref{TH1}. More details are given in Section \ref{decay}.
\vskip .1in 
The subsequent sections are organized as follows: Section \ref{pre} presents various anisotropic inequalities and some crucial properties related to the 
orthogonal decomposition, including the Poincar\'{e} type inequality for the oscillation portion $\widetilde f$. Section \ref{ssp} is dedicated to the proof of Theorem \ref{TH} and Section \ref{decay} proves Theorem \ref{TH1}.
 
\vskip .3in 
\section{Preliminaries} 
\label{pre}
This Section serves as preparation for the proof of Theorems \ref{TH} and \ref{TH1}.
Lemma \ref{l1} through Lemma \ref{l4} provide several frequently used facts on the orthogonal decomposition. While Lemma \ref{special5} presents a precise decay rate for a nonnegative integrable function,
which is also monotonic in a generalized sense.

\vskip .1in 
We start first, by presenting some basic properties of $\overline{f}$ and $\widetilde{f}$. 

\begin{lem}\label{l1}
Let $\Omega=\mathbb{T}\times \mathbb{R}$. Assume that $f$ defined on $\Omega$ is sufficiently
regular, say $f\in H^2(\Omega).$ Let  $\overline{f}$ and $\widetilde{f}$ be defined as in (\ref{b}) and (\ref{bbs}). Then 
\\
(a) The average operator $\overline{f}$ and the oscillation operator $\widetilde{f}$ commute with 
partial derivatives, 
\begin{align*}
\overline{\partial_1f}=\partial_1\overline{f}=0,\quad \overline{\partial_2f}=\partial_2\overline{f},\quad \widetilde{\partial_1f}=\partial_1\widetilde{f}, \quad  \widetilde{\partial_2f}=\partial_2\widetilde{f}, \quad \overline{\widetilde{f}}=0. 
\end{align*}
(b) If $f$ is a divergence-free vector field, namely $\nabla\cdot f=0,$ then $\overline{f}$ and $\widetilde{f}$ are also divergence-free,
\begin{align*}
\nabla\cdot \overline{f}=0\,\quad\text{and}\quad\nabla\cdot \widetilde{f}=0.
\end{align*}
(c) $\overline{f}$ and $\widetilde{f}$ are orthogonal in $\dot H^k$ for any integer $k\ge 0$, namely
\begin{align*}
(\overline{f},\widetilde{f})_{\dot H^k(\Omega)}:=\int_{\Omega}\overline{D^k f}\cdot \widetilde{D^k f}dx=0,\quad\|f\|_{\dot H^k(\Omega)}^2=\|\overline{f}\|_{\dot H^k(\Omega)}^2+\|\widetilde{f}\|_{\dot H^k(\Omega)}^2.
\end{align*}
In particular,
\begin{align*}
\|\overline{f}\|_{\dot H^k(\Omega)}\le \|f\|_{\dot H^k(\Omega)}\,\quad\text{and}\quad \|\widetilde{f}\|_{\dot H^k(\Omega)}\le \|f\|_{\dot H^k(\Omega)}.
\end{align*} 
The orthogonality is actually more general and holds for any integrable functions, 
$$
\int_{\Omega} \overline{f} \cdot \widetilde g \,dx =0. 
$$
\end{lem}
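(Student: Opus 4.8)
The plan is to argue directly from the definition (\ref{b}), exploiting two structural features of the torus factor: the periodicity of $f$ in $x_1$ over $\mathbb{T}=[0,1]$, and the normalization $|\mathbb{T}|=1$. A useful conceptual backbone is the Fourier picture already noted in the introduction, in which $\overline f$ is the zeroth $x_1$-mode $\hat f_0(x_2)$ and $\widetilde f$ collects the modes $n\neq 0$; from this viewpoint every identity in (a) reduces to the single observation that both $\partial_1$ and $\partial_2$ preserve the splitting of $x_1$-frequencies into $\{n=0\}$ and $\{n\neq 0\}$, and the orthogonality in (c) is just orthogonality of distinct Fourier characters in $L^2(\mathbb{T})$. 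I would nonetheless write the proof in the integral language, since it requires no Fourier apparatus and makes the regularity hypotheses transparent.

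For part (a), I would first record that
$$
\overline{\partial_1 f}(x_2)=\int_{\mathbb{T}}\partial_1 f(x_1,x_2)\,dx_1=f(1,x_2)-f(0,x_2)=0
$$
by the fundamental theorem of calculus together with periodicity in $x_1$, while $\partial_1\overline f=0$ holds trivially since $\overline f$ depends on $x_2$ alone. The identity $\overline{\partial_2 f}=\partial_2\overline f$ is differentiation under the integral sign, legitimate because $f\in H^2(\Omega)$ renders the integrand and its $x_2$-derivative suitably $x_1$-integrable. The two oscillation identities and $\overline{\widetilde f}=0$ then follow formally: from $\overline{\partial_i f}=\partial_i\overline f$ one gets $\widetilde{\partial_i f}=\partial_i f-\overline{\partial_i f}=\partial_i f-\partial_i\overline f=\partial_i\widetilde f$, and using the idempotency $\overline{\overline f}=\overline f$ (which is exactly where $|\mathbb{T}|=1$ enters) one obtains $\overline{\widetilde f}=\overline f-\overline{\overline f}=0$.

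For part (b), writing $f=(f_1,f_2)$ and applying (a) componentwise gives $\nabla\cdot\overline f=\partial_1\overline{f_1}+\partial_2\overline{f_2}=\overline{\partial_2 f_2}=\overline{\nabla\cdot f}-\overline{\partial_1 f_1}=0$, using in turn $\partial_1\overline{f_1}=0$, $\partial_2\overline{f_2}=\overline{\partial_2 f_2}$, the hypothesis $\nabla\cdot f=0$, and $\overline{\partial_1 f_1}=0$; then $\nabla\cdot\widetilde f=\nabla\cdot f-\nabla\cdot\overline f=0$ by linearity. Part (c) I regard as the heart of the lemma. I would first prove the general $L^2$-orthogonality: by Fubini and the $x_1$-independence of $\overline f$,
$$
\int_\Omega \overline f\cdot\widetilde g\,dx=\int_{\mathbb{R}}\overline f(x_2)\cdot\Big(\int_{\mathbb{T}}\widetilde g(x_1,x_2)\,dx_1\Big)\,dx_2=\int_{\mathbb{R}}\overline f(x_2)\cdot\overline{\widetilde g}(x_2)\,dx_2=0,
$$
since $\overline{\widetilde g}=0$ by (a). For the $\dot H^k$ statement I would apply (a) iteratively to move each derivative inside the bar and tilde, so that for every multi-index $\alpha$ with $|\alpha|=k$ one has $\partial^\alpha\overline f=\overline{\partial^\alpha f}$ and $\partial^\alpha\widetilde f=\widetilde{\partial^\alpha f}$; invoking the general orthogonality with $h=\partial^\alpha f$ annihilates each term $\int_\Omega \partial^\alpha\overline f\cdot\partial^\alpha\widetilde f\,dx$, and summing over $|\alpha|=k$ yields $(\overline f,\widetilde f)_{\dot H^k}=0$. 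The Pythagorean identity $\|f\|_{\dot H^k}^2=\|\overline f\|_{\dot H^k}^2+\|\widetilde f\|_{\dot H^k}^2$ and the two subordination inequalities then follow at once.

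The only genuine obstacle is bookkeeping at two points: ensuring the boundary term in $\overline{\partial_1 f}$ vanishes by \emph{periodicity} rather than by decay, and justifying the interchange in $\overline{\partial_2 f}=\partial_2\overline f$ by the $H^2$ regularity of $f$. Once these are secured, every remaining claim is a formal consequence of linearity of the averaging operator, the normalization $|\mathbb{T}|=1$, and the orthogonality of the zero and nonzero $x_1$-frequencies.
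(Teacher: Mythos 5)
Your proof is correct and takes essentially the same approach the paper intends: the paper states that all items "can be directly verified by the definition of $\overline{f}$ and $\widetilde{f}$" (deferring details to \cite{DWXZ}), and your argument is precisely that direct verification, with the key points—periodicity giving $\overline{\partial_1 f}=0$, differentiation under the integral for $\overline{\partial_2 f}=\partial_2\overline{f}$, and Fubini plus $\overline{\widetilde{g}}=0$ for the orthogonality—filled in correctly.
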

Lemma \ref{l1} can be proven easily using the definition of $\overline{f}$ and $\widetilde{f}$.

\vskip .1in 
The next Lemma compares the 1D Sobolev inequalities on the whole line $\mathbb R$ and 
on bounded domains. 

\begin{lem}\label{delta216}
For any 1D function $f \in H^1(\mathbb{R})$,
\begin{align*}
\|f\|_{L^{\infty}(\mathbb{R})} \le \sqrt{2} \,\|f\|_{L^{2}(\mathbb{R})}^{\frac{1}{2}}\,\|f'\|_{L^{2}(\mathbb{R})}^{\frac{1}{2}}.
\end{align*}
For any bounded domain such as $\mathbb{T} = [0,1]$ and $f \in H^1(\mathbb{T})$,
\begin{align*}
\|f\|_{L^{\infty}(\mathbb{T})} \le \sqrt{2} \,\|f\|_{L^{2}(\mathbb{T})}^{\frac{1}{2}}\,\|f'\|_{L^{2}(\mathbb{T})}^{\frac{1}{2}} + \|f\|_{L^{2}(\mathbb{T})},
\end{align*}
in particular, if the function f has mean zero such as the oscillation part $\tilde{f}$, 
\begin{align*}
\|f\|_{L^{\infty}(\mathbb{T})} \le C \,\|f\|_{L^{2}(\mathbb{T})}^{\frac{1}{2}}\,\|f'\|_{L^{2}(\mathbb{T})}^{\frac{1}{2}}.
\end{align*}
\end{lem}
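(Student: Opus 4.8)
The plan is to derive all three inequalities from one pointwise representation of $f(x)^2$ as an integral of $2ff'$, the three cases differing only in the choice of base point and in how a boundary contribution is absorbed. Throughout I use that $f\in H^1$ of a one-dimensional domain is (after modification on a null set) continuous, so pointwise evaluation is legitimate; the estimates may first be proved for smooth $f$ and then extended by density.

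First I would treat the whole line. For $f\in H^1(\mathbb{R})$, since $f(y)\to 0$ as $y\to-\infty$, I write
$$
f(x)^2=\int_{-\infty}^{x}\frac{d}{dy}\big(f(y)^2\big)\,dy=2\int_{-\infty}^{x}f(y)f'(y)\,dy\le 2\int_{\mathbb{R}}|f|\,|f'|\,dy,
$$
and Cauchy--Schwarz gives $f(x)^2\le 2\|f\|_{L^2(\mathbb{R})}\|f'\|_{L^2(\mathbb{R})}$ uniformly in $x$. Taking the supremum over $x$ and a square root produces the stated constant $\sqrt{2}$.

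For the bounded interval $\mathbb{T}=[0,1]$ the decay at $-\infty$ is no longer available, so the one new ingredient is a good base point. Because $\int_0^1 f^2=\|f\|_{L^2(\mathbb{T})}^2$ and $f^2$ is continuous, the mean value theorem for integrals yields a point $x_0\in[0,1]$ with $f(x_0)^2=\|f\|_{L^2(\mathbb{T})}^2$. Writing $f(x)^2=f(x_0)^2+2\int_{x_0}^{x}ff'\,dy$ and bounding the integral as above gives
$$
\|f\|_{L^\infty(\mathbb{T})}^2\le \|f\|_{L^2(\mathbb{T})}^2+2\|f\|_{L^2(\mathbb{T})}\|f'\|_{L^2(\mathbb{T})},
$$
and the stated form follows from the elementary subadditivity $\sqrt{a+b}\le\sqrt{a}+\sqrt{b}$, which separates the $\|f\|_{L^2(\mathbb{T})}$ term from the interpolation term.

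For the mean-zero case the cleanest route is to sharpen the choice of base point: a continuous $f$ with $\int_0^1 f\,dy=0$ must change sign (unless it vanishes identically), so by the intermediate value theorem it has a zero $x_0\in[0,1]$. Integrating from this zero gives $f(x)^2=2\int_{x_0}^{x}ff'\,dy\le 2\|f\|_{L^2(\mathbb{T})}\|f'\|_{L^2(\mathbb{T})}$, so the additive $\|f\|_{L^2(\mathbb{T})}$ term disappears entirely and one obtains the interpolation inequality with constant $\sqrt{2}$; alternatively, one can feed the Poincar\'e inequality $\|f\|_{L^2(\mathbb{T})}\le C\|f'\|_{L^2(\mathbb{T})}$ for mean-zero $f$ into the previous estimate to absorb the linear term, at the cost of a larger constant $C$. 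There is no substantive obstacle here: the only points requiring care are the continuity of $f$ (so that the mean value theorem and the sign argument apply) and, when this lemma is later used on $\widetilde f$, checking that $\widetilde f$ genuinely has zero $x_1$-mean so that the vanishing point exists; both are immediate from Lemma \ref{l1} and the one-dimensional Sobolev embedding.
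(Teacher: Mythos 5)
Your proof is correct, and there is nothing in the paper to diverge from: the paper states this lemma without proof (its closing remark refers to \cite{DWXZ} only for Lemmas \ref{l1}, \ref{l2}, \ref{l3} and \ref{l4}), and your argument is the canonical one for this standard 1D Agmon-type inequality. Your three base-point choices — decay at $-\infty$ on $\mathbb{R}$, a point $x_0$ with $f(x_0)^2 = \int_0^1 f^2\,dy$ on $\mathbb{T}$, and a zero of $f$ guaranteed by the mean-zero condition — are exactly the standard device, the absolute-continuity and vanishing-at-infinity facts you invoke are legitimate for $H^1$ in one dimension, and your mean-zero argument in fact yields the constant $\sqrt{2}$, sharper than the unspecified $C$ in the statement.
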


\vskip .1in 
The following lemma presents anisotropic upper bounds for triple products as 
well as for the $L^\infty$-norm on the domain $\Omega$. Anisotropic Sobolev inequalities are powerful tools for dealing with anisotropic models. The whole space version of these type of inequalities has previously been 
used in \cite{CaoWu} in the 2D cases and in \cite{WuZhu} in the 3D case. 

\begin{lem}\label{l2}Let $\Omega=\mathbb{T}\times \mathbb{R}.$ For any $f,g,h\in L^2(\Omega)$ with $\partial_1f\in L^2(\Omega)$ and $\partial_2g\in L^2(\Omega),$ then
\begin{align} \label{uu}
\Big|\int_{\Omega}fgh\,dx\Big|\le C\|f\|_{L^2}^{\frac12}(\|f\|_{L^2}+\|\partial_1f\|_{L^2})^{\frac12}\|g\|_{L^2}^{\frac12}\|\partial_2g\|_{L^2}^{\frac12}\|h\|_{L^2}\,.
\end{align}
For any $f \in H^2(\Omega),$ we have
\begin{align*}
\|f\|_{L^{\infty}(\Omega)}\le &C\|f\|_{L^2}^{\frac14}(\|f\|_{L^2}+\|\partial_1f\|_{L^2})^{\frac14}\|\partial_2f\|_{L^2}^{\frac14}\nonumber\\&\times(\|\partial_2f\|_{L^2}+\|\partial_1\partial_2f\|_{L^2})^{\frac14}.
\end{align*}
\end{lem}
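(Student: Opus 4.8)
The plan is to derive both inequalities by combining the two one-dimensional Sobolev inequalities of Lemma \ref{delta216}: the whole-line version in the unbounded vertical variable $x_2$ and the bounded-domain version on the periodic box $\mathbb{T}$ in the horizontal variable $x_1$. The asymmetry between these two directions is exactly what produces the asymmetric right-hand sides, and in particular the appearance of the full combination $\|f\|_{L^2}+\|\partial_1 f\|_{L^2}$ (rather than $\|\partial_1 f\|_{L^2}$ alone) reflects the absence of a Poincar\'{e} inequality on $\mathbb{T}$ for a general, non-mean-zero $f$.

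For the triple product, I would first freeze $x_2$ and extract the horizontal supremum of $f$, then apply Cauchy--Schwarz in $x_1$ to the pair $g,h$, obtaining
\[
\int_\Omega|fgh|\,dx \le \int_{\mathbb{R}} \|f(\cdot,x_2)\|_{L^\infty_{x_1}}\,\|g(\cdot,x_2)\|_{L^2_{x_1}}\,\|h(\cdot,x_2)\|_{L^2_{x_1}}\,dx_2.
\]
The remaining integral in $x_2$ is handled by pulling out $\sup_{x_2}\|g(\cdot,x_2)\|_{L^2_{x_1}}$ and applying Cauchy--Schwarz in $x_2$ to the two surviving factors. Three one-dimensional estimates then finish the job: first, $\sup_{x_2}\|g(\cdot,x_2)\|_{L^2_{x_1}}^2$ is controlled by the fundamental theorem of calculus in $x_2$, which yields $\sup_{x_2}\|g\|_{L^2_{x_1}} \le \sqrt{2}\,\|g\|_{L^2}^{1/2}\|\partial_2 g\|_{L^2}^{1/2}$; second, $\|h(\cdot,x_2)\|_{L^2_{x_1}}$ integrated over $x_2$ in $L^2$ is just $\|h\|_{L^2(\Omega)}$; and third, squaring the $\mathbb{T}$-Sobolev inequality for $\|f(\cdot,x_2)\|_{L^\infty_{x_1}}$ and integrating in $x_2$ gives $\|\,\|f\|_{L^\infty_{x_1}}\,\|_{L^2_{x_2}}^2 \le C\|f\|_{L^2}(\|f\|_{L^2}+\|\partial_1 f\|_{L^2})$. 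Multiplying these three bounds produces exactly the claimed estimate.

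For the $L^\infty$-bound I would avoid differentiating pointwise suprema by working instead with squared, integrated quantities. The vertical direction is handled first: the fundamental theorem of calculus in $x_2$ gives
\[
\sup_{x_2}f(x_1,x_2)^2 \le 2\,\|f(x_1,\cdot)\|_{L^2_{x_2}}\|\partial_2 f(x_1,\cdot)\|_{L^2_{x_2}},
\]
so that $\|f\|_{L^\infty(\Omega)}^2 \le 2\sup_{x_1}\bigl(\sqrt{P(x_1)}\sqrt{Q(x_1)}\bigr)$ with $P(x_1)=\|f(x_1,\cdot)\|_{L^2_{x_2}}^2$ and $Q(x_1)=\|\partial_2 f(x_1,\cdot)\|_{L^2_{x_2}}^2$. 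I would then bound $\sup_{x_1}P$ and $\sup_{x_1}Q$ separately by the mean-value-plus-FTC form of the $\mathbb{T}$ estimate, namely $\sup_{x_1}P \le \int_{\mathbb{T}}P\,dx_1 + \int_{\mathbb{T}}|P'|\,dx_1 \le C\|f\|_{L^2}(\|f\|_{L^2}+\|\partial_1 f\|_{L^2})$ and likewise $\sup_{x_1}Q \le C\|\partial_2 f\|_{L^2}(\|\partial_2 f\|_{L^2}+\|\partial_1\partial_2 f\|_{L^2})$. Combining through $\sup_{x_1}\sqrt{PQ}\le \sqrt{\sup_{x_1}P}\,\sqrt{\sup_{x_1}Q}$ and taking a square root yields the stated fourth-root estimate.

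The routine calculations are just repeated Cauchy--Schwarz and the one-dimensional inequalities, so I expect no serious analytic obstacle. The one point requiring care is the bounded-direction estimate on $\mathbb{T}$: because a general $f$ need not have zero horizontal mean, one cannot use Poincar\'{e}, and the correct device is the mean-value representation $f(x_1,x_2)^2 = \int_{\mathbb{T}}f(a,x_2)^2\,da + \int_{\mathbb{T}}\int_a^{x_1}2f\,\partial_1 f\,ds\,da$, which is precisely what generates both terms $\|f\|_{L^2}$ and $\|\partial_1 f\|_{L^2}$. Working with the integrated quantities $P$ and $Q$ rather than with $\sup_{x_2}|f|$ directly also sidesteps any measurability or differentiability worries about pointwise suprema.
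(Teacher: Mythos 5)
Your proof is correct. The paper does not actually prove Lemma \ref{l2}; it refers the reader to \cite{DWXZ} for the details, and your argument is precisely the standard direction-splitting proof used in that line of work: the two one-dimensional inequalities of Lemma \ref{delta216} applied separately in $x_1$ and $x_2$, combined with Cauchy--Schwarz, with the bounded-domain ($\mathbb{T}$) version correctly supplying the lower-order factor $\|f\|_{L^2}+\|\partial_1 f\|_{L^2}$ since $f$ need not have zero horizontal mean.
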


\vskip .1in 
Replacing $f$ in Lemma \ref{l2} by its oscillation portion $\widetilde{f}$, the lower-order part in (\ref{uu}) can be dropped, as presented in the next Lemma.
\begin{lem}\label{l3}Let $\Omega=\mathbb{T}\times \mathbb{R}.$ For any $f,g,h\in L^2(\Omega)$ with $\partial_1f\in L^2(\Omega)$ and $\partial_2g\in L^2(\Omega),$ then
\begin{align}
\Big|\int_{\Omega}\widetilde{f}gh\,dx\Big|\le C\|\widetilde{f}\|_{L^2}^{\frac12}\|\partial_1\widetilde{f}\|_{L^2}^{\frac12}\|g\|_{L^2}^{\frac12}\|\partial_2g\|_{L^2}^{\frac12}\|h\|_{L^2}.\label{a}
\end{align}
For any $f\in H^2(\Omega)$, we have
\begin{align*}
\|\widetilde{f}\|_{L^{\infty}(\Omega)}\le C\|\widetilde{f}\|_{L^2}^{\frac14}\|\partial_1\widetilde{f}\|_{L^2}^{\frac14}\|\partial_2\widetilde{f}\|_{L^2}^{\frac14}\|\partial_1\partial_2\widetilde{f}\|_{L^2}^{\frac14}\,.
\end{align*}
\end{lem}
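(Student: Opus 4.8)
The plan is to treat both inequalities as anisotropic estimates, handling the two coordinate directions separately with the appropriate one-dimensional Sobolev inequality from Lemma \ref{delta216}. The whole point is that, since $\widetilde f$ has zero average in $x_1$ over $\mathbb{T}$, one may invoke the mean-zero form $\|\widetilde f\|_{L^\infty(\mathbb{T})}\le C\|\widetilde f\|_{L^2(\mathbb{T})}^{1/2}\|\partial_1\widetilde f\|_{L^2(\mathbb{T})}^{1/2}$ rather than the version carrying the additive lower-order term. This is exactly what allows the factor $(\|f\|_{L^2}+\|\partial_1 f\|_{L^2})^{1/2}$ appearing in Lemma \ref{l2} to collapse to $\|\partial_1\widetilde f\|_{L^2}^{1/2}$, so the proofs run parallel to those of Lemma \ref{l2} with this one improvement.

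For the triple-product bound (\ref{a}), I would first estimate $\widetilde f$ in $L^\infty$ along the periodic direction and $g$ in $L^\infty$ along the vertical direction. Writing $F(x_2)=\|\widetilde f(\cdot,x_2)\|_{L^\infty(\mathbb{T})}$ and $G(x_1)=\|g(x_1,\cdot)\|_{L^\infty(\mathbb{R})}$, the integrand is controlled pointwise by $F(x_2)\,G(x_1)\,|h(x_1,x_2)|$, so the Cauchy--Schwarz inequality on $\Omega$ gives
\[
\Big|\int_\Omega \widetilde f\,g\,h\,dx\Big|\le \|F\|_{L^2(\mathbb{R})}\,\|G\|_{L^2(\mathbb{T})}\,\|h\|_{L^2(\Omega)}.
\]
The mean-zero inequality on $\mathbb{T}$ gives $F(x_2)^2\le C\|\widetilde f(\cdot,x_2)\|_{L^2(\mathbb{T})}\|\partial_1\widetilde f(\cdot,x_2)\|_{L^2(\mathbb{T})}$; integrating in $x_2$ and applying Cauchy--Schwarz once more yields $\|F\|_{L^2(\mathbb{R})}\le C\|\widetilde f\|_{L^2}^{1/2}\|\partial_1\widetilde f\|_{L^2}^{1/2}$. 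The whole-line inequality on $\mathbb{R}$ gives, in the same way, $\|G\|_{L^2(\mathbb{T})}\le C\|g\|_{L^2}^{1/2}\|\partial_2 g\|_{L^2}^{1/2}$, and combining the three factors produces exactly (\ref{a}).

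For the $L^\infty$ estimate, I would apply the mean-zero inequality on $\mathbb{T}$ fiberwise in $x_1$ to obtain $\|\widetilde f\|_{L^\infty(\Omega)}^2\le C\sup_{x_2}\big(\phi(x_2)^{1/2}\psi(x_2)^{1/2}\big)$, where $\phi(x_2)=\int_\mathbb{T}\widetilde f^2\,dx_1$ and $\psi(x_2)=\int_\mathbb{T}(\partial_1\widetilde f)^2\,dx_1$. Since $f\in H^2(\Omega)$, both $\phi$ and $\psi$ vanish as $x_2\to\pm\infty$, so the fundamental theorem of calculus together with Cauchy--Schwarz gives $\sup_{x_2}\phi\le 2\|\widetilde f\|_{L^2}\|\partial_2\widetilde f\|_{L^2}$ and $\sup_{x_2}\psi\le 2\|\partial_1\widetilde f\|_{L^2}\|\partial_1\partial_2\widetilde f\|_{L^2}$. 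Inserting these into the previous bound and taking a square root produces the stated four-factor estimate.

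The computations are routine once the decomposition is in place, and no single step is genuinely hard; the only things to be careful about are the correctly paired use of the two flavours of Lemma \ref{delta216} --- the mean-zero form on the compact periodic factor $\mathbb{T}$ and the standard form on the whole line $\mathbb{R}$ --- and the decay of $\widetilde f$ and its derivatives in $x_2$, guaranteed by $f\in H^2(\Omega)$, that justifies discarding the boundary contributions in the fundamental theorem of calculus.
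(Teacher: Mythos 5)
Your proof is correct. Note that the paper does not prove Lemma \ref{l3} itself but defers to \cite{DWXZ}; your argument --- fiberwise 1D Sobolev inequalities from Lemma \ref{delta216} (the mean-zero form on $\mathbb{T}$ applied to $\widetilde{f}$, which is legitimate since $\overline{\widetilde{f}}=0$, and the whole-line form in $x_2$ applied to $g$), assembled by Cauchy--Schwarz, with the vanishing of $\int_{\mathbb{T}}\widetilde{f}^{\,2}\,dx_1$ and $\int_{\mathbb{T}}(\partial_1\widetilde{f})^2\,dx_1$ at infinity justified so the fundamental theorem of calculus applies --- is exactly the standard anisotropic-estimate technique used in that reference, so there is nothing to flag.
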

The subsequent Lemma states that the oscillation component $\widetilde{f}$ verifies a strong Poincaré type inequality with the upper bound expressed in terms of $\partial_1\widetilde{f}$ rather than $\nabla\widetilde{f}$.
\begin{lem}\label{l4}Let  $\overline{f}$ and $\widetilde{f}$ be defined as in (\ref{b}) and (\ref{bbs}). If $\|\partial_1\widetilde{f}\|_{L^2(\Omega)}<\infty$, then 
\begin{align*}
\|\widetilde{f}\|_{L^2(\Omega)}\le C\|\partial_1\widetilde{f}\|_{L^2(\Omega)},
\end{align*}
where $C$ is a pure constant. In addition, if $\|\partial_1\widetilde{f}\|_{H^1(\Omega)}<\infty$, then
\begin{align*}
\|\widetilde{f}\|_{L^{\infty}(\Omega)}\le C\|\partial_1\widetilde{f}\|_{H^1(\Omega)}.
\end{align*}
\end{lem}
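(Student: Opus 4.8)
The plan is to exploit the defining property of the oscillation part, namely that $\widetilde f$ has vanishing mean in the periodic variable $x_1$ for every fixed $x_2$, which creates a spectral gap. For the first inequality I would expand $\widetilde f$ in a Fourier series in $x_1$ over $\mathbb{T}=[0,1]$,
$$
\widetilde f(x_1,x_2)=\sum_{k\neq 0}\widehat f_k(x_2)\,e^{2\pi i k x_1},
$$
where the $k=0$ mode is absent precisely because $\overline{\widetilde f}=0$ by Lemma \ref{l1}(a). Plancherel's identity in $x_1$ followed by integration in $x_2$ gives
$$
\|\widetilde f\|_{L^2(\Omega)}^2=\sum_{k\neq 0}\int_{\mathbb R}|\widehat f_k(x_2)|^2\,dx_2,\qquad
\|\partial_1\widetilde f\|_{L^2(\Omega)}^2=\sum_{k\neq 0}(2\pi k)^2\int_{\mathbb R}|\widehat f_k(x_2)|^2\,dx_2.
$$
Since $|k|\ge 1$ on the support of the sum, $(2\pi k)^2\ge (2\pi)^2$, and the first inequality follows with $C=\tfrac{1}{2\pi}$. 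Equivalently, one may apply the one-dimensional Poincaré--Wirtinger inequality slice-by-slice in $x_2$ and integrate, since $\widetilde f(\cdot,x_2)$ is mean-zero on $\mathbb{T}$; the whole-line direction plays no role whatsoever.

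For the $L^\infty$ bound I would start from the anisotropic estimate of Lemma \ref{l3},
$$
\|\widetilde f\|_{L^\infty(\Omega)}\le C\|\widetilde f\|_{L^2}^{1/4}\|\partial_1\widetilde f\|_{L^2}^{1/4}\|\partial_2\widetilde f\|_{L^2}^{1/4}\|\partial_1\partial_2\widetilde f\|_{L^2}^{1/4},
$$
and then remove the two factors that are not horizontal derivatives by invoking the $L^2$ Poincaré inequality just established. Applied to $\widetilde f$ it bounds $\|\widetilde f\|_{L^2}\le C\|\partial_1\widetilde f\|_{L^2}$; applied to $\partial_2\widetilde f$---which is again an oscillation function because differentiation commutes with the decomposition (Lemma \ref{l1}(a)), so that $\overline{\partial_2\widetilde f}=\partial_2\overline{\widetilde f}=0$---it bounds $\|\partial_2\widetilde f\|_{L^2}\le C\|\partial_1\partial_2\widetilde f\|_{L^2}$. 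Substituting both yields
$$
\|\widetilde f\|_{L^\infty(\Omega)}\le C\|\partial_1\widetilde f\|_{L^2}^{1/2}\|\partial_1\partial_2\widetilde f\|_{L^2}^{1/2}\le C\bigl(\|\partial_1\widetilde f\|_{L^2}+\|\partial_1\partial_2\widetilde f\|_{L^2}\bigr)\le C\|\partial_1\widetilde f\|_{H^1(\Omega)},
$$
where the middle step is Young's inequality and the last uses that $\|\partial_1\partial_2\widetilde f\|_{L^2}$ is one of the terms comprising $\|\partial_1\widetilde f\|_{H^1}$.

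I do not expect a genuine obstacle here: the only essential ingredient is the spectral gap produced by the absence of the zeroth Fourier mode, and everything else is bookkeeping. The one point deserving care is the rigorous justification of the Fourier--Plancherel manipulations for $f\in H^2(\Omega)$ on the mixed domain $\mathbb{T}\times\mathbb{R}$; this is handled either by proving the estimate first for smooth functions and extending by density, or simply by using the slice-wise Poincaré argument, which is purely one-dimensional in $x_1$ and sidesteps Fourier series altogether. It is worth emphasizing that no dissipation, coupling, or smallness enters the argument---Lemma \ref{l4} is a purely kinematic consequence of the decomposition \eqref{bbs}, which is exactly what permits it to be applied freely throughout the energy estimates in Sections \ref{ssp} and \ref{decay}.
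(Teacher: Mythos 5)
Your proposal is correct. The paper does not include its own proof of Lemma \ref{l4} (it defers to \cite{DWXZ}), and your argument---the spectral gap from the absent zeroth Fourier mode (equivalently, slice-wise Poincar\'e--Wirtinger in $x_1$) for the $L^2$ bound, followed by bootstrapping the anisotropic inequality of Lemma \ref{l3} with that Poincar\'e inequality applied to both $\widetilde f$ and $\partial_2\widetilde f$---is exactly the standard proof given in that reference; note also that your use of Lemma \ref{l3} introduces no circularity, since your $L^2$ Poincar\'e inequality is established independently of it and Lemma \ref{l3} precedes Lemma \ref{l4} in the paper's development.
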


\vskip .1in
As a direct consequence of Lemma \ref{l4} and the inequality  (\ref{a}), one has
\begin{align}\label{ohno}
\Big|\int_{\Omega}\widetilde{f}gh\,dx\Big|\le C\|\partial_1\widetilde{f}\|_{L^2}\|g\|_{L^2}^{\frac12}\|\partial_2g\|_{L^2}^{\frac12}\|h\|_{L^2}.
\end{align}
We refer the readers to \cite{DWXZ} for detailed proofs of Lemmas \ref{l1}, \ref{l2}, \ref{l3} and \ref{l4}.
\vskip .1in 
The last lemma precises an explicit decay rate in (\ref{special2}) for functions 
that are integrable and are decreasing in a general sense, namely (\ref{special1}).

\begin{lem}\label{special5}
Let $f = f(t)$ be a nonnegative function satisfying , for two constants $C_0 > 0$ and $C_1 > 0$,
\begin{align}\label{special1}
\int_0^{\infty} f(\tau) d\tau < C_0 \quad \text{and} \quad f(t) \le C_1 f(s) \quad \text{for any}\quad 0 \le s <t.
\end{align}
Then, for $C_2 = \max\lbrace2C_1f(0), 4C_0C_1\rbrace$ and for any $t>0$,
\begin{align}\label{special2}
f(t) \le C_2(1+t)^{-1}.
\end{align}
Furthermore, $f(t)$ has the following large-time asymptotic behavior, 
$$
\lim_{t\to +\infty} t \, f(t) =0. 
$$
\end{lem}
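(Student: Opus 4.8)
The plan is to convert the pointwise ``almost monotone'' bound in (\ref{special1}) into a localized integral bound by averaging over a well-chosen time window, and then to exploit the finiteness of $\int_0^\infty f$. The single genuine idea is that, although the hypothesis $f(t)\le C_1 f(s)$ only compares $f$ at two points, integrating it in $s$ replaces the uncontrolled value $f(s)$ by an integral of $f$, which we can bound by $C_0$.

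Concretely, I would fix $t>0$ and integrate the inequality $f(t)\le C_1 f(s)$ over $s\in[t/2,t]$. Since the left-hand side does not depend on $s$, this yields
\[
\frac{t}{2}\,f(t)=\int_{t/2}^{t} f(t)\,ds\le C_1\int_{t/2}^{t} f(s)\,ds\le C_1\int_0^\infty f(s)\,ds< C_0C_1,
\]
so that $f(t)\le \dfrac{2C_0C_1}{t}$ for every $t>0$. Separately, taking $s=0$ in (\ref{special1}) gives the crude uniform bound $f(t)\le C_1 f(0)$ valid for all $t>0$, which is what controls the function for small times.

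Next I would patch these two estimates together to recover the stated $(1+t)^{-1}$ form with the precise constant. For $0<t\le 1$ one has $(1+t)^{-1}\ge \tfrac12$, hence $f(t)\le C_1 f(0)\le 2C_1 f(0)\,(1+t)^{-1}$; for $t\ge 1$ one has $1+t\le 2t$, so $(1+t)^{-1}\ge \tfrac{1}{2t}$ and therefore $f(t)\le 2C_0C_1/t\le 4C_0C_1\,(1+t)^{-1}$. Choosing $C_2=\max\{2C_1 f(0),\,4C_0C_1\}$ then covers both ranges simultaneously and produces exactly (\ref{special2}).

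Finally, for the sharp asymptotic $\lim_{t\to\infty} t\,f(t)=0$ I would repeat the averaging step but retain the tail instead of bounding it by the full integral: the same computation gives
\[
t\,f(t)\le 2C_1\int_{t/2}^{t} f(s)\,ds\le 2C_1\int_{t/2}^{\infty} f(s)\,ds,
\]
and since $\int_0^\infty f<\infty$ the tail $\int_{t/2}^{\infty} f(s)\,ds\to 0$ as $t\to\infty$, whence $t\,f(t)\to 0$. I do not expect a serious obstacle: the only real step is averaging the one-sided monotonicity over the window $[t/2,t]$, and the mild care needed is in bookkeeping the constants to reproduce $C_2$ and in splitting at $t=1$ to upgrade the $1/t$ bound into a $(1+t)^{-1}$ bound.
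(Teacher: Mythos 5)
Your proof is correct: integrating the one-sided monotonicity $f(t)\le C_1 f(s)$ over the window $s\in[t/2,t]$ gives $\tfrac{t}{2}f(t)\le C_1\int_{t/2}^{t}f(s)\,ds$, which together with the crude bound $f(t)\le C_1f(0)$ and the split at $t=1$ yields (\ref{special2}), and retaining the tail $\int_{t/2}^{\infty}f(s)\,ds\to 0$ gives the sharp asymptotic $t\,f(t)\to 0$. The paper does not prove this lemma itself but defers to \cite{LWZ}, where the argument is precisely this averaging device; the fact that your bookkeeping reproduces the exact stated constant $C_2=\max\{2C_1f(0),\,4C_0C_1\}$ confirms that you have recovered the intended proof.
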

A detailed proof of Lemma \ref{special5} can be found in \cite{LWZ}.
\section{The $H^2$ Nonlinear Stability} 
\label{ssp}
This section proves Theorem. \ref{TH}.
\vskip .1in 
\begin{proof}
The proof is naturally divided into two major parts. The first part is for the existence,
while the second part is for the uniqueness of solutions to (\ref{oussama}).\\

To prove the global existence of solutions, it suffices to establish the energy in-
equality in (\ref{ine}) with $E(t)$ being defined in (\ref{ee}). 
This process consists of two main parts. The first is to estimate the $H^2$-norm of $(u, \theta)$ while the second is to estimate $\|\p_1 u_2\|_{L^2}^2$ and its time integral. 

\vskip .1in 
Note that, for a divergence-free vector field $u$, namely $\na\cdot u=0$, we have 
$$
\|\na u\|_{L^2} = \|\om\|_{L^2}, \quad \|\Delta u\|_{L^2} = \|\na \om\|_{L^2},
$$
where $\om=\na\times u$ is the vorticity. Then, the $H^2$-norm of $u$ is equivalent to the 
sum of the $L^2$-norms of $u$, $\om$ and $\na \om$. \\ Taking the $L^2$-inner product of $(u, \theta)$
with the first two equations in (\ref{oussama}), we find that the $L^2$-norm of $(u, \theta)$ obeys
\ben
&&\|u(t)\|_{L^2}^{2} +\|\theta(t)\|_{L^2}^{2}  + 2 \nu \int_{0}^{t} \|\partial_2 u(\tau)\|_{L^2}^{2}\, d\tau
+ 2\eta \int_{0}^t \|\partial_1 \theta(\tau)\|_{L^2}^{2} d\tau \notag\\
&& = \|u_0\|_{L^2}^{2} +\|\theta_0\|_{L^2}^{2}.\label{l2estimate}
\een
Next, we estimate the $L^2$-norm of $(\om, \na \theta)$. We make use of the vorticity equation 
and the temperature equation, 
\begin{equation}\label{vorticityequation}
\begin{aligned}
&\partial_t \omega +u\cdot \nabla \omega=  \nu \partial_{22}\omega + g_0\partial_1 \theta, \\
&\partial_t \theta + u \cdot \nabla \theta + g_0u_2= \eta \partial_{11} \theta.
\end{aligned}
\end{equation}
Dotting the equations of $\om$ and $\na\theta$ by $(\om, \na\theta)$, yields
\begin{equation}
\begin{aligned}
\frac{1}{2}\frac{d}{dt}(\|\omega\|_{L^2}^{2}+\|\nabla \theta\|_{L^2}^{2})+ \nu \|\partial_2 \omega \|_{L^2}^{2}+ \eta  \|\partial_1 \nabla \theta \|_{L^2}^{2}= I_1 + I_2, \label{coo}
\end{aligned}
\end{equation}
where 
$$
 I_1 =g_0\int (\partial_1 \theta\, \omega- \nabla u_2\cdot \nabla \theta)\, dx, \quad I_2= -\int \nabla \theta \cdot \nabla u \cdot \nabla \theta\, dx.
$$
Then, expressing $\om$ and $u$ in terms of the stream function $\psi$, namely $\om =\Delta \psi$ and $u= \na^\perp \psi:=(-\p_2 \psi, \p_1 \psi)$, we get 
\beno
I_1 &=& g_0\int (\partial_1 \theta\, \omega- \nabla u_2\cdot \nabla \theta)\, dx 
= g_0\int (\p_1 \theta \Delta \psi - \na \p_1 \psi\cdot \nabla \theta)\, dx\\
&=& g_0\int (-\theta \, \Delta\p_1 \psi + \Delta \p_1 \psi \, \theta)\,dx =0. 
\eeno
We further write $I_2$ into four terms,
\be
I_2 &=&  -\int (\partial_1 u_1 (\partial_1 \theta)^2+ \partial_1 u_2 \partial_1 \theta \partial_2 \theta +\partial_2 u_1 \partial_1 \theta \partial_2 \theta+ \partial_2 u_2 (\partial_2 \theta)^2)\,dx\nonumber\\
&:=& I_{21} + I_{22} + I_{23} + I_{24}.\label{S}
\ee 
The key point here is to obtain
upper bounds for the terms on the right-hand side of (\ref{S}) that are time integrable. By Lemmas \ref{l1}, \ref{l3} and Young’s inequality, $I_{21}$, $I_{22}$ and $I_{23}$ can be bounded as follows
\begin{align}\label{fogai11}
I_{21}:&=-\int \partial_1u_1(\partial_1\theta)^2dx=-\int \partial_1\widetilde{u_1}(\partial_1\theta)^2dx\nonumber\\&\le c\|\partial_1 \theta\|_{L^2}^{\frac{1}{2}} \|\partial_2 \partial_1 \theta \|_{L^2}^{\frac{1}{2}} \|\partial_1 \theta\|_{L^2}^{\frac{1}{2}} \|\partial_1 \partial_1\theta \|_{L^2}^{\frac{1}{2}}\|\partial_1\widetilde{u_1}\|_{L^2}\nonumber\\
&\le c\, \|u\|_{H^2}\, \|\partial_1\theta \|_{H^2}^2,
\end{align}
\begin{align}\label{fogai12}
I_{22}:&=-\int \partial_1u_2\partial_1\theta\partial_2\theta dx=-\int \partial_1u_2\partial_1\widetilde{\theta}\partial_2\theta dx\nonumber\\&\le c\, \|\partial_1\widetilde{\theta}\|_{L^2}^{\frac12}\|\partial_1\partial_1\widetilde{\theta}\|_{L^2}^{\frac12}\|\partial_2\theta\|_{L^2}^{\frac12}\|\partial_2\partial_2\theta\|_{L^2}^{\frac12}\|\partial_1u_2\|_{L^2}\nonumber\\&\le c\, \|\partial_1\theta\|_{H^2}\|\theta\|_{H^2}\|\partial_1u_2\|_{L^2}\nonumber\\&\le c\,\|\theta\|_{H^2}\Big(\|\partial_1\theta\|_{H^2}^2+\|\partial_1u_2\|_{L^2}^2\Big),
\end{align}
\begin{align}\label{foga13}
I_{23}:&=-\int \partial_2u_1\partial_1\theta\partial_2\theta dx= -\int \partial_2u_1\partial_1\widetilde{\theta}\partial_2\theta dx \nonumber\\&\le c\, \|\partial_1\widetilde{\theta}\|_{L^2}^{\frac12} \|\partial_1\partial_1\widetilde{\theta}\|_{L^2}^{\frac12}\|\partial_2u_1\|_{L^2}^{\frac12}\|\partial_2\partial_2u_1\|_{L^2}^{\frac12}\|\partial_2\theta\|_{L^2}\nonumber\\&\le c\, \|\partial_1\theta\|_{H^2}\|\partial_2u\|_{H^2}\|\theta\|_{H^2}\nonumber\\&\le c\,\|\theta\|_{H^2}\Big(\|\partial_1\theta\|_{H^2}^2+\|\partial_2u\|_{H^2}\Big).
\end{align} 
Using the divergence-free condition $\na\cdot u=0$, integration by parts and Lemmas \ref{l1} and \ref{l4}, we obtain 
\begin{align}\label{foga14}
I_{24}:&=-\int \partial_2u_2(\partial_2\theta)^2dx\nonumber=\int \partial_1u_1(\partial_2\theta)^2dx\\& =  - 2\int  \widetilde{u_1}\, \partial_2 \theta\, \partial_1\partial_2\theta\,dx
\nonumber\\&\le  c\,  \|\partial_2 \theta\|_{L^2} \| \widetilde{u_1}\|_{L^2}^{\frac{1}{2}} \|\partial_1 \widetilde{u_1} \|_{L^2}^{\frac{1}{2}}  \| \partial_1\partial_2\theta\|_{L^2}^{\frac{1}{2}} \|\partial_2\partial_1\partial_2\theta\|_{L^2}^{\frac{1}{2}}\nonumber\\&\le  c\,  \|\theta\|_{H^2} \underbrace{ \|\partial_1 \widetilde{u_1} \|_{L^2}}_{= \|\partial_2 \widetilde{u_2} \|_{L^2}}  \| \partial_1\theta\|_{H^2}
\nonumber\\&\le  c\,  \|\theta\|_{H^2} \Big( \|\partial_2 u \|_{H^2}^2+  \| \partial_1\theta\|_{H^2}^2\Big).
\end{align}
Hence, collecting the upper bounds on $I_2$ and inserting them in (\ref{coo}), we find 
\ben
&& \frac{d}{dt}(\|\nabla u\|_{L^2}^{2}+\|\nabla \theta\|_{L^2}^{2})+ 2\nu \|\partial_2 \nabla u \|_{L^2}^{2}+ 2\eta  \|\partial_1 \nabla \theta \|_{L^2}^{2} \notag\\
&& \qquad\qquad \leq c\, \|(u,\theta)\|_{H^2}  \Big( \|\partial_2 u \|_{H^2}^2+  \| \partial_1\theta\|_{H^2}^2+\|\partial_1u_2\|_{L^2}^2\Big). \label{h1sum}
\een
Thus, integrating  (\ref{h1sum}) over $[0, t]$ and combining with $(\ref{l2estimate})$, we get 
\ben
&& \|(u, \theta)\|_{H^1}^{2}+2 \nu  \int_{0}^{t}\|\partial_2  u(s) \|_{H^1}^{2}ds + 2 \eta  \int_{0}^{t}\|\partial_1  \theta(s) \|_{H^1}^{2}ds \notag\\
&& \leq \|(u_0, \theta_0)\|_{H^1}^{2} +  \,c\, \int_0^t \|(u,\theta)\|_{H^2}  \Big( \|\partial_2 u \|_{H^2}^2+  \| \partial_1\theta\|_{H^2}^2+\|\partial_1u_2\|_{L^2}^2\Big)\,d\tau \nonumber\\
&& \le E(0) + \,c\, E(t)^{\frac{3}{2}}. \label{h1final}
\een
\vskip .1in 
To bound the $H^2$-norm of $(u,\theta)$, it then remains to control the $L^2$-norm of $(\na \om, \Delta \theta)$. Applying $\nabla$ to the first equation of (\ref{vorticityequation}) then dotting with $\nabla \omega$, and applying $\Delta$ to the second equation of (\ref{vorticityequation}) then dotting with $\Delta \theta$, we find 
\beq\label{h2energy}
\frac{1}{2}\frac{d}{dt}(\|\nabla \omega\|_{L^2}^{2} +\|\Delta \theta(t)\|_{L^2}^{2})+  \nu \|\partial_2 \nabla \omega\|_{L^2}^{2}+ \eta \|\partial_1 \Delta \theta\|_{L^2}^{2} 
= J_1 + J_2 + J_3,
\eeq
with
\beno 
&& J_1 = g_0\int (\nabla \partial_1 \theta \cdot \nabla \omega- \Delta u_2 \Delta \theta)\,dx, \\
&& J_2 = -\int \nabla \omega \cdot \nabla u \cdot \nabla \omega\, dx, \\ 
&& J_3 = - \int \Delta \theta \cdot \Delta (u\cdot \nabla \theta) dx.
\eeno 
Similarly, we need to obtaining an upper bound for  that is time integrable
for each term in (\ref{h2energy}). Writing $\om$ and $u$ in terms of the stream function $\psi$, namely $\om =\Delta \psi$ and $u= \na^\perp \psi:=(-\p_2 \psi, \p_1 \psi)$, we have 
\beno 
J_1 &=& g_0\int (\nabla \partial_1 \theta \cdot \nabla \omega- \Delta u_2 \Delta \theta)\,dx
= g_0\int (\nabla \partial_1 \theta \cdot \nabla \omega- \Delta \p_1 \psi \Delta \theta)\,dx\\
&=& g_0\int (\nabla \partial_1 \theta \cdot \nabla \omega-  \p_1 \om\, \Delta \theta)\,dx
 = g_0\int (\nabla \partial_1 \theta \cdot \nabla \omega + \p_1 \na\om\, \cdot\na \theta)\,dx\\
&=& g_0\int \p_1(\na\theta \cdot \na \om)\,dx =0. 
\eeno 
After integration by parts, we decompose $J_3$ it into four pieces,
\begin{align}
J_3 &= -\int \Delta \theta\,  \Delta u_1 \, \partial_1 \theta\, dx - \int \Delta \theta \, \Delta u_2\, \partial_2 \theta\, dx\nonumber\\
&-2\int \Delta \theta\, \nabla u_1\cdot \partial_1\nabla\theta\, dx -2\int \Delta \theta \,\nabla u_2  \cdot \partial_2 \nabla \theta\, dx\nonumber\\
&:= J_{31} + J_{32} + J_{33} + J_{34}.\label{dd3}
\end{align}
To deal with $J_{31}$, we make use of the orthogonal decompositions $u =\overline{u}+\widetilde{u}$ and  $\theta= \overline{\theta}+\widetilde{\theta}$ to write
\begin{align}\label{usaj21}
J_{31}:&=-\int \Delta\theta\Delta u_1\partial_1\theta dx=-\int \Delta\theta\Delta u_1\partial_1\widetilde{\theta} dx\nonumber\\&=-\int \Delta\theta\partial_{11} u_1\partial_1\widetilde{\theta} dx-\int \Delta\theta\partial_{22} u_1\partial_1\widetilde{\theta} dx\nonumber\\&=\int \Delta\theta\partial_{12} u_2\partial_1\widetilde{\theta} dx-\int \Delta\theta\partial_{22} u_1\partial_1\widetilde{\theta} dx\nonumber\\&:=J_{311}+J_{312}.
\end{align}
Applying Lemma \ref{l3} we obtain,
\begin{align}
J_{311}&:=\int \Delta\theta\partial_{12} u_2\partial_1\widetilde{\theta} dx\nonumber\\&\le c\|\partial_1\widetilde{\theta}\|_{L^2}^{\frac12}\|\partial_1\partial_1\widetilde{\theta}\|_{L^2}^{\frac12}\|\partial_{12}u_2\|_{L^2}^{\frac12}\|\partial_2\partial_{12}u_2\|_{L^2}^{\frac12}\|\Delta\theta\|_{L^2}\nonumber\\&\le c\|\theta\|_{H^2}\|\partial_1\theta\|_{H^2}\|\partial_2u\|_{H^2}\nonumber\\&\le c\|\theta\|_{H^2}\Big(\|\partial_1\theta\|_{H^2}^2+\|\partial_2u\|_{H^2}^2\Big),\label{s1}
\end{align}
\begin{align}\label{s2}
J_{312}&:=-\int \Delta\theta\partial_{22} u_1\partial_1\widetilde{\theta} dx\nonumber\\&\le c\|\partial_1\widetilde{\theta}\|_{L^2}^{\frac12}\|\partial_1\partial_1\widetilde{\theta}\|_{L^2}^{\frac12}\|\partial_{22}u_1\|_{L^2}^{\frac12}\|\partial_2\partial_{22}u_1\|_{L^2}^{\frac12}\|\Delta\theta\|_{L^2}\nonumber\\&\le c\|\theta\|_{H^2}\|\partial_1\theta\|_{H^2}\|\partial_2u\|_{H^2}\nonumber\\&\le c\|\theta\|_{H^2}\Big(\|\partial_1\theta\|_{H^2}^2+\|\partial_2u\|_{H^2}^2\Big).
\end{align}
Inserting the upper bounds for $J_{311}$ and $J_{312}$ in (\ref{usaj21}) yields
\begin{align}
J_{31}\le  c\|\theta\|_{H^2}\Big(\|\partial_{1}\theta\|_{H^2}^{2}+\|\partial_{2}u\|_{H^2}^{2}\Big).\label{d31}
\end{align}
To deal with $J_{32}$, we divide it first into two terms,
\begin{align}\label{DI*}
J_{32}&=-\int \Delta \theta  \Delta u_2\partial_2 \theta dx\nonumber\\&
=-\int \partial_1 \partial_1 \theta \,\Delta u_2 \,\partial_2 \theta dx -\int \partial_2 \partial_2 \theta \, \Delta u_2\, \partial_2 \theta \, dx \nonumber\\&
=-\int \partial_1 \partial_1 \theta\, \Delta u_2\partial_2\, \theta \,dx +\frac{1}{2}\int \Delta  \partial_2 u_2\, (\partial_2 \theta)^2\, dx \nonumber\\&
=-\int \partial_1 \partial_1 \theta\, \Delta u_2\,\partial_2 \theta\, dx -\frac{1}{2}\int \Delta  \partial_1 u_1\, (\partial_2 \theta)^2 dx \nonumber\\&
=-\int \partial_1 \partial_1 \theta\, \Delta u_2\,\partial_2 \theta\, dx +\int \Delta  u_1\, \partial_2 \theta\, \partial_1 \partial_2 \theta dx\nonumber\\&=J_{321}+J_{322}.
\end{align}
Invoking the decompositions of $u$ and $\theta$, we can rewrite $J_{321}$ as,
\begin{align}\label{usa104}
J_{321}&:=-\int \partial_1 \partial_1 \theta\, \Delta u_2\,\partial_2 \theta\, dx\nonumber\\&=-\int \partial_1 \partial_1 \widetilde{\theta}\, \partial_{11}\widetilde{u_2}\,\partial_2 \overline{\theta}\, dx-\int \partial_1 \partial_1 \widetilde{\theta}\, \partial_{11}\widetilde{u_2}\,\partial_2 \widetilde{\theta}\, dx-\int \partial_1 \partial_1 \widetilde{\theta}\, \partial_{22} u_2\,\partial_2 \theta\, dx \nonumber\\&:= J_{3211} +  J_{3212} +  J_{3213}.
\end{align}
The three terms in $J_{321}$ can be bounded as follows. By interation by parts, Lemma \ref{l1}, Hölder's inequality, Lemma \ref{delta216} and Young's inequality,
\begin{align}\label{1usa104*}
J_{3211}&:= -\int \partial_1 \partial_1 \widetilde{\theta}\, \partial_{11}\widetilde{u_2}\,\partial_2 \overline{\theta}\, dx\nonumber\\&=\int \partial_1 \partial_{11} \widetilde{\theta}\, \partial_{1}\widetilde{u_2}\,\partial_2 \overline{\theta}\, dx\nonumber\\&=\int_{\mathbb{R}} \partial_{2}\overline{\theta}\Big(\int_{\mathbb{T}}\partial_1 \partial_{11} \widetilde{\theta}\, \partial_{1}\widetilde{u_2} dx_1\Big)dx_2\nonumber\\&\le \int_{\mathbb{R}} |\partial_{2}\overline{\theta}|\|\partial_{1} \widetilde{u_2}\|_{L^2_{x_1}}\|\partial_1\partial_{11}\widetilde{\theta} \|_{L^2_{x_1}}dx_2\nonumber\\&\le  \|\partial_{2}\overline{\theta}\|_{L^{\infty}_{x_2}}\|\partial_{1} \widetilde{u_2}\|_{L^2_{x_2}L^2_{x_1}}\|\partial_1\partial_{11}\widetilde{\theta} \|_{L^2_{x_2}L^2_{x_1}}\nonumber\\& \le c\|\partial_{2}\overline{\theta}\|_{H^1}\|\partial_{1} \widetilde{u_2}\|_{L^2}\|\partial_1\partial_{11}\widetilde{\theta} \|_{L^2}\nonumber\\&\le c\|\theta\|_{H^2}\Big(\|\partial_{1} u_2\|_{L^2}^2+\|\partial_1\theta \|_{H^2}^2\Big).
\end{align}
By lemma \ref{l3} and then lemma \ref{l4}
\begin{align}\label{2usa104}
J_{3212}&:=-\int \partial_1 \partial_1 \widetilde{\theta}\, \partial_{11}\widetilde{u_2}\,\partial_2 \widetilde{\theta}\, dx\nonumber\\&\le c\underbrace{\|\partial_{2}\widetilde{\theta}\|_{L^2}^{\frac12}}_{\le \|\partial_1\partial_{2}\widetilde{\theta}\|_{L^2}^{\frac12}} \|\partial_1\partial_{2}\widetilde{\theta}\|_{L^2}^{\frac12}\|\partial_{11}\widetilde{\theta}\|_{L^2}^{\frac12}\|\partial_2\partial_{11}\widetilde{\theta}\|_{L^2}^{\frac12}\|\partial_{11}u_2\|_{L^2}\nonumber\\&\le c\|u\|_{H^2}\|\partial_{1}\theta\|_{H^2}^2.
\end{align}
Making use of the divergence-free condition of $u$, Lemmas \ref{l1} and \ref{l3}, we have
\begin{align}\label{3usa104}
J_{3213}&:=-\int \partial_1 \partial_1 \widetilde{\theta}\, \partial_{22} u_2\,\partial_2 \theta\, dx\nonumber\\&=-\int \partial_1 \partial_1 \widetilde{\theta}\, \partial_{21} \widetilde{u_1}\,\partial_2 \theta\, dx\nonumber\\&\le c \|\partial_{21} \widetilde{u_1}\|_{L^2}^{\frac{1}{2}} \|\partial_1\partial_{21} \widetilde{u_1}\|_{L^2}^{\frac{1}{2}}\|\partial_{11} \widetilde{\theta}\|_{L^2}^{\frac{1}{2}} \|\partial_2\partial_{11} \widetilde{\theta}\|_{L^2}^{\frac{1}{2}}\|\partial_2\theta\|_{L^2}\nonumber\\&\le c\|\theta\|_{H^2}\|\partial_{1}\theta\|_{H^2}\|\partial_{2}u\|_{H^2}\nonumber\\&\le c\|\theta\|_{H^2}\Big(\|\partial_{1}\theta\|_{H^2}^2+\|\partial_{2}u\|_{H^2}^2\Big).
\end{align}
Inserting (\ref{1usa104*}), (\ref{2usa104}) and (\ref{3usa104}) in (\ref{usa104}) we obtain
\begin{align}
J_{321}\le c\|(u,\theta)\|_{H^2}\Big(\|\partial_{1}\theta\|_{H^2}^2+\|\partial_{2}u\|_{H^2}^2+\|\partial_{1}u_2\|_{L^2}^2\Big).\label{di6}
\end{align}
We now turn to $J_{322}$. We further decompose it into two
terms,
\begin{align}
J_{322}&:=\int \Delta  u_1\, \partial_2 \theta\, \partial_1 \partial_2 \theta dx\nonumber\\&=\int \partial_{11}  \widetilde{u_1}\, \partial_2 \theta\, \partial_1 \partial_2 \widetilde{\theta} dx+\int \partial_{22}  u_1\, \partial_2 \theta\, \partial_1 \partial_2 \widetilde{\theta} dx\nonumber\\&=J_{3221}+J_{3222}.\label{di*}
\end{align}
Due to the divergence-free condition of $u$ and Lemma \ref{l3},
\begin{align}\label{di1*}
J_{3221}&:=\int \partial_{11}  \widetilde{u_1}\, \partial_2 \theta\, \partial_{12} \widetilde{\theta} dx\nonumber\\&=-\int \partial_{12}  \widetilde{u_2}\, \partial_2 \theta\, \partial_{12}  \widetilde{\theta} dx\nonumber\\&\le c \|\partial_{12}  \widetilde{u_2}\|_{L^2}^{\frac12}\|\partial_1\partial_{12}  \widetilde{u_2}\|_{L^2}^{\frac12}\|\partial_{12}  \widetilde{\theta}\|_{L^2}^{\frac12}\|\partial_2\partial_{12}  \widetilde{\theta}\|_{L^2}^{\frac12}\|\partial_2\theta\|_{L^2}\nonumber\\&\le c \|\theta\|_{H^2}\|\partial_2u\|_{H^2}\|\partial_1\theta\|_{H^2}\nonumber\\&\le c \|\theta\|_{H^2}\Big(\|\partial_2u\|_{H^2}^2+\|\partial_1\theta\|_{H^2}^2\Big).
\end{align}
By Lemma \ref{l3},
\begin{align}\label{di2*}
J_{3222}&:=\int \partial_{22}  u_1\, \partial_2 \theta\, \partial_{12} \widetilde{\theta} dx\nonumber\\&\le c \|\partial_{12}  \widetilde{\theta}\|_{L^2}^{\frac12}\|\partial_1\partial_{12}  \widetilde{\theta}\|_{L^2}^{\frac12}\|\partial_{22}  u_2\|_{L^2}^{\frac12}\|\partial_2\partial_{22}  u_2\|_{L^2}^{\frac12}\|\partial_2\theta\|_{L^2}\nonumber\\&\le c \|\theta\|_{H^2}\|\partial_2u\|_{H^2}\|\partial_1\theta\|_{H^2}\nonumber\\&\le c \|\theta\|_{H^2}\Big(\|\partial_2u\|_{H^2}^2+\|\partial_1\theta\|_{H^2}^2\Big).
\end{align}
Combining the estimates (\ref{di1*}) and (\ref{di2*}) and inserting them in (\ref{di*}) we find
\begin{align}\label{di4}
J_{322}\le c \|\theta\|_{H^2}\Big(\|\partial_2u\|_{H^2}^2+\|\partial_1\theta\|_{H^2}^2\Big).
\end{align}
Putting (\ref{di6}) and (\ref{di4}) in (\ref{DI*}) we obtain
\begin{align}
J_{32}\le  c \|(u,\theta)\|_{H^2}\Big(\|\partial_2u\|_{H^2}^2+\|\partial_1\theta\|_{H^2}^2+\|\partial_1u_2\|_{L^2}^2\Big).\label{d32}
\end{align}
The next term $J_{33}$ is naturally split into two parts, 
\begin{align}
J_{33}&:=-2\int \Delta\theta\nabla u_1\cdot\partial_1\nabla\theta dx\nonumber\\&=-2\int \Delta\theta\partial_1 u_1\partial_1\partial_1\theta dx-2\int \Delta\theta\partial_2 u_1\partial_1\partial_2\theta dx\nonumber\\&:=J_{331}+J_{332}.\label{L}
\end{align}
All terms can be bounded suitably. In fact, due to the divergence-free condition of $u$ and Lemma \ref{l3},
\begin{align}\label{L1}
J_{331}&:=-2\int \Delta\theta\partial_1 u_1\partial_{11}\theta dx\nonumber\\&=2\int \Delta\theta\partial_2 u_2\partial_1\partial_1\widetilde{\theta} dx\nonumber\\&\le c\|\partial_{11}\widetilde{\theta}\|_{L^2}^{\frac12}\|\partial_1\partial_{11}\widetilde{\theta}\|_{L^2}^{\frac12}\|\partial_{2}u_2\|_{L^2}^{\frac12}\|\partial_2\partial_{2}u_2\|_{L^2}^{\frac12}\|\Delta\theta\|_{L^2}\nonumber\\&\le c \|\theta\|_{H^2}\|\partial_2u\|_{H^2}\|\partial_1\theta\|_{H^2}\nonumber\\&\le c \|\theta\|_{H^2}\Big(\|\partial_2u\|_{H^2}^2+\|\partial_1\theta\|_{H^2}^2\Big).
\end{align}
$J_{332}$ can be bounded similarly, by $\partial_1\theta=\partial_1\widetilde{\theta}$ and Lemma \ref{l3},
\begin{align}\label{L2}
J_{332}&:=-2\int \Delta\theta\partial_2 u_1\partial_{12}\theta dx\nonumber\\&=-2\int \Delta\theta\partial_2 u_1\partial_{12}\widetilde{\theta} dx\nonumber\\&\le c\|\partial_{12}\widetilde{\theta}\|_{L^2}^{\frac12}\|\partial_1\partial_{12}\widetilde{\theta}\|_{L^2}^{\frac12}\|\partial_{2}u_1\|_{L^2}^{\frac12}\|\partial_2\partial_{2}u_1\|_{L^2}^{\frac12}\|\Delta\theta\|_{L^2}\nonumber\\&\le c \|\theta\|_{H^2}\|\partial_2u\|_{H^2}\|\partial_1\theta\|_{H^2}\nonumber\\&\le c \|\theta\|_{H^2}\Big(\|\partial_2u\|_{H^2}^2+\|\partial_1\theta\|_{H^2}^2\Big).
\end{align}
Inserting these upper bounds in (\ref{L}) we get
\begin{align}
J_{33}\le c\|\theta\|_{H^2}\Big(\|\partial_1\theta\|_{H^2}^2+\|\partial_2u\|_{H^2}^2\Big).\label{d33}
\end{align}
To estimate $J_{34}$, we first invoke the decompositions $u = \overline{u} + \widetilde{u}$, $\theta = \overline{\theta} + \widetilde{\theta}$ and Lemma \ref{l1}, to write $J_{34}$ as
\begin{align}
J_{34}&:=-2\int \Delta\theta\nabla u_2\cdot\partial_2\nabla\theta dx\nonumber\\&=-2\int (\partial_1 u_2\partial_1\partial_2\theta\Delta\theta+\partial_2 u_2\partial_2\partial_2\theta\Delta\theta) dx\nonumber\\&=-2\int \partial_1 \widetilde{u_2}\partial_1\partial_2\widetilde{\theta}\Delta\theta-2\int\partial_2 u_2\partial_2\partial_2\theta\Delta\theta dx\nonumber\\&=-2\int  \partial_1\widetilde{u_2}\partial_1\partial_2\widetilde{\theta}\partial_{11}\theta dx-2\int  \partial_1\widetilde{u_2}\partial_1\partial_2\widetilde{\theta}\partial_{22}\theta dx-2\int\partial_2 u_2\partial_2\partial_2\theta\Delta\theta dx\nonumber\\&:=J_{341}+J_{342}+J_{343}.\label{24}
\end{align}
We start with $J_{341}$. By integration by parts, Lemmas \ref{l1} and \ref{l3} we have 
\begin{align}
J_{341}&:=-2\int  \partial_1\widetilde{u_2}\partial_1\partial_2\widetilde{\theta}\partial_{11}\theta dx\nonumber\\&=2\int  \widetilde{u_2}\partial_1\partial_1\partial_2\widetilde{\theta}\partial_{11}\widetilde{\theta} dx\nonumber\\&\le c\|\partial_{11}\widetilde{\theta}\|_{L^2}^{\frac12}\|\partial_1\partial_{11}\widetilde{\theta}\|_{L^2}^{\frac12}\|\partial_{12}\widetilde{\theta}\|_{L^2}^{\frac12}\|\partial_2\partial_{11}\widetilde{\theta}\|_{L^2}^{\frac12}\|\partial_1\widetilde{u_2}\|_{L^2}\nonumber\\&\le c\|u\|_{H^2}\|\partial_1\theta\|_{H^2}^2.\label{241}
\end{align}
Using the decomposition $\theta = \overline{\theta} + \widetilde{\theta}$ we write $J_{342}$ as,
\begin{align}\label{11usa104}
J_{342}&:=-2\int  \partial_1\widetilde{u_2}\partial_1\partial_2\widetilde{\theta}\partial_{22}\theta dx\nonumber\\&=-2\int  \partial_1\widetilde{u_2}\partial_1\partial_2\widetilde{\theta}\partial_{22}\overline{\theta} dx-2\int  \partial_1\widetilde{u_2}\partial_1\partial_2\widetilde{\theta}\partial_{22}\widetilde{\theta} dx\nonumber\\&:=J_{3421}+J_{3422}.
\end{align}
We start with $J_{3421}$. Due to integration by parts, Lemma \ref{l1}, Hölder's inequality, Lemma \ref{delta216} and Young's inequality,
\begin{align}\label{1usa104}
J_{3421}&:= -2\int  \partial_1\widetilde{u_2}\partial_1\partial_2\widetilde{\theta}\partial_{22}\overline{\theta} dx\nonumber\\&=2\int (\partial_2\partial_1\widetilde{u_2}\partial_1\partial_2\widetilde{\theta}+\partial_1\widetilde{u_2}\partial_2\partial_1\partial_2\widetilde{\theta})\partial_{2}\overline{\theta}\, dx\nonumber\\&=2\int_{\mathbb{R}} \partial_{2}\overline{\theta}\Big(\int_{\mathbb{T}}(\partial_2\partial_1\widetilde{u_2}\partial_1\partial_2\widetilde{\theta}+\partial_1\widetilde{u_2}\partial_2\partial_1\partial_2\widetilde{\theta}) dx_1\Big)dx_2\nonumber\\&\le c\int_{\mathbb{R}} |\partial_{2}\overline{\theta}|\Big(\|\partial_2\partial_1\widetilde{u_2} \|_{L^2_{x_1}}\|\partial_1\partial_2\widetilde{\theta}\|_{L^2_{x_1}}+\|\partial_1\widetilde{u_2} \|_{L^2_{x_1}}\|\partial_2\partial_1\partial_2\widetilde{\theta}\|_{L^2_{x_1}}\Big)dx_2\nonumber\\&\le c \|\partial_{2}\overline{\theta}\|_{L^{\infty}_{x_2}}\Big(\|\partial_2\partial_1\widetilde{u_2} \|_{L^2_{x_2}L^2_{x_1}}\|\partial_1\partial_2\widetilde{\theta}\|_{L^2_{x_2}L^2_{x_1}}+\|\partial_1\widetilde{u_2} \|_{L^2_{x_2}L^2_{x_1}}\|\partial_2\partial_1\partial_2\widetilde{\theta}\|_{L^2_{x_2}L^2_{x_1}}\Big)\nonumber\\& \le c\|\partial_{2}\overline{\theta}\|_{H^1}\Big(\|\partial_2\partial_1\widetilde{u_2} \|_{L^2}\|\partial_1\partial_2\widetilde{\theta}\|_{L^2}+\|\partial_1\widetilde{u_2} \|_{L^2}\|\partial_2\partial_1\partial_2\widetilde{\theta}\|_{L^2}\Big)\nonumber\\&\le c\|\theta\|_{H^2}\Big(\|\partial_{2} u\|_{H^2}^2+\|\partial_1\theta \|_{H^2}^2+\|\partial_{1} u_2\|_{L^2}^2\Big).
\end{align}
For $J_{3422}$, we apply Lemma \ref{l3} then Young's inequality,
\begin{align}\label{12usa104}
J_{3422}&:= -2\int  \partial_1\widetilde{u_2}\partial_1\partial_2\widetilde{\theta}\partial_{22}\widetilde{\theta} dx\nonumber\\&\le c\|\partial_{22}\widetilde{\theta}\|_{L^2}^{\frac12}\|\partial_1\partial_{22}\widetilde{\theta}\|_{L^2}^{\frac12}\|\partial_{12}\widetilde{\theta}\|_{L^2}^{\frac12}\|\partial_2\partial_{12}\widetilde{\theta}\|_{L^2}^{\frac12}\|\partial_1\widetilde{u_2}\|_{L^2}\nonumber\\&\le c\|\theta\|_{H^2}^{\frac12}\|u\|_{H^2}^{\frac12}\|\partial_1\theta\|_{H^2}^{\frac32}\|\partial_1u_2\|_{L^2}^{\frac12}\nonumber\\&\le c\|\theta\|_{H^2}^{\frac12}\|u\|_{H^2}^{\frac12}\Big(\|\partial_1\theta\|_{H^2}^{2}+\|\partial_1u_2\|_{L^2}^{2}\Big).
\end{align}
In view of (\ref{11usa104}), (\ref{1usa104}) and (\ref{12usa104}) we have
\begin{align}
J_{342}\le c\|(u,\theta)\|_{H^2}\Big(\|\partial_{2} u\|_{H^2}^2+\|\partial_1\theta \|_{H^2}^2+\|\partial_{1} u_2\|_{L^2}^2\Big).\label{J342*}
\end{align}
Writing $J_{343}$
more explicitly and using $\partial_1\theta=\partial_1\widetilde{\theta}$, we have
\begin{align}
J_{343}&:=-2\int\partial_2 u_2\partial_2\partial_2\theta\Delta\theta dx\nonumber\\&=-2\int\partial_2 u_2\partial_{22}\theta\partial_{11}\widetilde{\theta} dx-2\int\partial_2 u_2\partial_{22}\theta\partial_{22}\theta dx\nonumber\\&:=J_{3431}+J_{3432}.\label{bb}
\end{align}
From Lemma \ref{l3}, $J_{3431}$ can be bounded as,
\begin{align}
J_{3431}&:=-2\int\partial_2 u_2\partial_{22}\theta\partial_{11}\widetilde{\theta} dx\nonumber\\&\le c\|\partial_{11}\widetilde{\theta}\|_{L^2}^{\frac12}\|\partial_1\partial_{11}\widetilde{\theta}\|_{L^2}^{\frac12}\|\partial_2u_2\|_{L^2}^{\frac12}\|\partial_2\partial_2u_2\|_{L^2}^{\frac12}\|\partial_{22}\theta\|_{L^2}\nonumber\\&\le c\|\theta\|_{H^2}\|\partial_1\theta\|_{H^2}\|\partial_2u\|_{H^2}\nonumber\\&\le c\|\theta\|_{H^2}\Big(\|\partial_1\theta\|_{H^2}^2+\|\partial_2u\|_{H^2}^2\Big).\label{bb1}
\end{align}
The estimate of $J_{3432}$ is slightly more delicate. Due to the decomposition $\theta=\widetilde{\theta}+\overline{\theta}$, we write $J_{3432}$ as,
\begin{align}
J_{3432}&:=-2\int\partial_2 u_2\partial_{22}\theta\partial_{22}\theta dx\nonumber\\&=-2\int\partial_2 u_2\partial_{22}\overline{\theta}\partial_{22}\overline{\theta} dx-4\int\partial_2u_2\partial_{22}\overline{\theta}\partial_{22}\widetilde{\theta} dx-2\int\partial_2 u_2\partial_{22}\widetilde{\theta}\partial_{22}\widetilde{\theta}dx\nonumber\\&:=J_{34321}+J_{34322}+J_{34323}.\label{aa}
\end{align}
By $\nabla\cdot u=0$ and Lemma \ref{l1}, the first term $J_{34321}$ is clearly zero,
\begin{align}
J_{34321}=-2\int\partial_2 u_2\partial_{22}\overline{\theta}\partial_{22}\overline{\theta} dx=2\int\partial_1 \widetilde{u_1}\partial_{22}\overline{\theta}\partial_{22}\overline{\theta} dx=0.\label{aa1}
\end{align}
Applying Lemmas \ref{l3} and \ref{l4} and Young's inequality,
\begin{align}
J_{34322}&:=-4\int\partial_2u_2\partial_{22}\overline{\theta}\partial_{22}\widetilde{\theta} dx\nonumber\\&\le c \| \partial_{22}\overline{\theta}\|_{L^2}\| \partial_{22}\widetilde{\theta}\|_{L^2}^{\frac12}\|\partial_1\partial_{22}\widetilde{\theta}\|_{L^2}^{\frac12}\| \partial_{2}u_2\|_{L^2}^{\frac12}\|\partial_2\partial_{2}u_2\|_{L^2}^{\frac12}\nonumber\\&\le c \| \partial_{22}\overline{\theta}\|_{L^2}\|\partial_1\partial_{22}\widetilde{\theta}\|_{L^2}^{\frac12}\|\partial_1\partial_{22}\widetilde{\theta}\|_{L^2}^{\frac12}\| \partial_{2}u_2\|_{L^2}^{\frac12}\|\partial_2\partial_{2}u_2\|_{L^2}^{\frac12}\nonumber\\&\le c\|\theta\|_{H^2}\|\partial_1\theta\|_{H^2}\|\partial_2u\|_{H^2}\nonumber\\&\le c\|\theta\|_{H^2}\Big(\|\partial_1\theta\|_{H^2}^2+\|\partial_2u\|_{H^2}^2\Big),\label{aa2}
\end{align} 
\begin{align}
J_{34323}&:=-4\int\partial_2u_2\partial_{22}\widetilde{\theta}\partial_{22}\widetilde{\theta} dx\nonumber\\&\le c \| \partial_{22}\overline{\theta}\|_{L^2}\| \partial_{22}\widetilde{\theta}\|_{L^2}^{\frac12}\|\partial_1\partial_{22}\widetilde{\theta}\|_{L^2}^{\frac12}\| \partial_{2}u_2\|_{L^2}^{\frac12}\|\partial_2\partial_{2}u_2\|_{L^2}^{\frac12}\nonumber\\&\le c \| \partial_{22}\overline{\theta}\|_{L^2}\|\partial_1\partial_{22}\widetilde{\theta}\|_{L^2}^{\frac12}\|\partial_1\partial_{22}\widetilde{\theta}\|_{L^2}^{\frac12}\| \partial_{2}u_2\|_{L^2}^{\frac12}\|\partial_2\partial_{2}u_2\|_{L^2}^{\frac12}\nonumber\\&\le c\|\theta\|_{H^2}\|\partial_1\theta\|_{H^2}\|\partial_2u\|_{H^2}\nonumber\\&\le c\|\theta\|_{H^2}\Big(\|\partial_1\theta\|_{H^2}^2+\|\partial_2u\|_{H^2}^2\Big).\label{aa3}
\end{align} 
The bounds for $J_{3432}$ in (\ref{aa1}), (\ref{aa2}) and (\ref{aa3}) lead to,
\begin{align}
J_{3432}\le c\|\theta\|_{H^2}\Big(\|\partial_1\theta\|_{H^2}^2+\|\partial_2u\|_{H^2}^2\Big).\label{bb2}
\end{align}
Combining (\ref{bb1}) and (\ref{bb2}) and inserting them in (\ref{bb}) we obtain
\begin{align}
J_{343}\le c\|\theta\|_{H^2}\Big(\|\partial_1\theta\|_{H^2}^2+\|\partial_2u\|_{H^2}^2\Big).\label{242}
\end{align}
Inserting (\ref{241}), (\ref{J342*}) and (\ref{242}) in (\ref{24}) we get
\begin{align}
J_{34}\le c\|(u,\theta)\|_{H^2}\Big(\|\partial_1\theta\|_{H^2}^2+\|\partial_2u\|_{H^2}^2+\|\partial_1u_2\|_{L^2}^2\Big).\label{d34}
\end{align}
Thus, by (\ref{d31}), (\ref{d32}), (\ref{d33}), (\ref{d34}), and (\ref{dd3}),
\begin{align}
J_3\le c\|(u,\theta)\|_{H^2}\Big(\|\partial_2u\|_{H^2}^2&+\|\partial_1u_2\|_{L^2}^2+\|\partial_1\theta\|_{H^2}^2\Big).\label{kk}
\end{align}
As outlined in the introduction, we need the help of an extra regularization term to bound $J_2$, namely,
\beq\label{qqq}
\int_0^t \|\p_1 u_2\|_{L^2}^2\,d\tau.
\eeq
In order to make efficient use of the anisotropic dissipation, we express $J_2$ as follows
\begin{align}
J_2 =&-\int \partial_1 u_1\,  (\partial_1\omega)^2 \, dx - \int \partial_1 u_2\,  \partial_1 \omega \, \partial_2 \omega\,  dx \nonumber\\
& - \int \partial_2 u_1\, \partial_1 \omega \, \partial_2 \omega dx-\int \partial_2 u_2\,  (\partial_2\omega)^2 dx\nonumber\\
=&\int \partial_2 u_2\,  (\partial_1\omega)^2 dx - \int \partial_1 u_2\,  \partial_1 \omega\,  \partial_2 \omega \, dx \nonumber\\
& - \int \partial_2 u_1\,  \partial_1 \omega\,  \partial_2 \omega \, dx - \int \partial_2 u_2\,  (\partial_2\omega)^2\,  dx\nonumber\\
:=& J_{21} + J_{22} + J_{23} + J_{24}. \label{J2}
\end{align}
The terms $J_{21}$ through $J_{24}$ can be bounded  in the following manner. Due to $\nabla\cdot u=0$, integration by parts and Lemmas \ref{l1} and \ref{l3},
\begin{align}\label{ohno1}
J_{21}&:=-\int \partial_1u_1(\partial_1 \omega)^2\,dx\nonumber\\&=\int \partial_2u_2(\partial_1 \omega)^2\,dx\nonumber\\&=-2\int \widetilde{u_2}\partial_1\widetilde{\omega}\partial_2\partial_1\widetilde{\omega}\,dx\nonumber\\&\le c\|\widetilde{u_2}\|_{L^2}^{\frac12}\|\partial_1\widetilde{u_2}\|_{L^2}^{\frac12}\|\partial_2\partial_1\widetilde{\omega}\|_{L^2}^{\frac12}\|\partial_2\partial_1\widetilde{\omega}\|_{L^2}^{\frac12}\|\partial_2\partial_1\widetilde{\omega}\|_{L^2}\nonumber\\&\le c\|u\|_{H^2}\|\partial_1u_2\|_{L^2}^{\frac12}\|\partial_2u\|_{H^2}^{\frac32}\nonumber\\&\le c\|u\|_{H^2}\Big(\|\partial_1u_2\|_{L^2}^{2}+\|\partial_2u\|_{H^2}^{2}\Big).
\end{align}
According to Lemmas \ref{l1} and \ref{l3},
\begin{align}\label{ohno2}
J_{22}&:=-\int \partial_1u_2\partial_1 \omega\partial_2\omega\,dx\nonumber\\&=-\int \partial_1\widetilde{u_2}\partial_1\widetilde{\omega}\partial_2\omega\,dx\nonumber\\&\le c\|\partial_1\widetilde{u_2}\|_{L^2}^{\frac12}\|\partial_1\partial_1\widetilde{u_2}\|_{L^2}^{\frac12}\|\partial_1\widetilde{\omega}\|_{L^2}^{\frac12}\|\partial_2\partial_1\widetilde{\omega}\|_{L^2}^{\frac12}\|\partial_2\omega\|_{L^2}\nonumber\\&\le c\|u\|_{H^2}\|\partial_1u_2\|_{L^2}^{\frac12}\|\partial_2u\|_{H^2}^{\frac32}\nonumber\\&\le c\|u\|_{H^2}\Big(\|\partial_1u_2\|_{L^2}^{2}+\|\partial_2u\|_{H^2}^{2}\Big).
\end{align}
To bound $J_{23}$, we first use the orthogonal decomposition of $u_1$ and $\omega$ and Lemma \ref{l1}, to write $J_{23}$ as
\begin{align}
J_{23}&:=-\int \partial_2u_1\partial_1\omega\partial_2\omega\,dx=-\int \partial_2u_1\partial_1\widetilde{\omega}\partial_2\omega\,dx\nonumber\\&=-\int \partial_2\overline{u_1}\partial_1\widetilde{\omega}\partial_2\overline{\omega}\,dx-\int \partial_2\overline{u_1}\partial_1\widetilde{\omega}\partial_2\widetilde{\omega}\,dx-\int \partial_2\widetilde{u_1}\partial_1\widetilde{\omega}\partial_2 \omega\,dx\nonumber\\&=J_{231}+J_{232}+J_{233}.\label{13-}
\end{align}
According to Lemma \ref{l1}, the first term $J_{231}$ is clearly zero,
\begin{align}
J_{231}&:=-\int \partial_2\overline{u_1}\partial_1\widetilde{\omega}\partial_2\overline{\omega}\,dx=-\int_{\mathbb{R}} \partial_2\overline{u_1}\partial_2\overline{\omega}\int_{\mathbb{T}}\partial_1\widetilde{\omega}\,dx_1dx_2=0.\label{131}
\end{align}
The terms $J_{232}$ and $J_{233}$ can be bounded directly. By Lemma \ref{l3}, 
\begin{align}
J_{232}&:=-\int \partial_2\overline{u_1}\partial_1\widetilde{\omega}\partial_2\widetilde{\omega}\,dx\nonumber\\&\le c\|\partial_2\widetilde{\omega}\|_{L^2}^{\frac12}\|\partial_1\partial_2\widetilde{\omega}\|_{L^2}^{\frac12}\|\partial_1\widetilde{\omega}\|_{L^2}^{\frac12}\|\partial_2\partial_1\widetilde{\omega}\|_{L^2}^{\frac12}\|\partial_2\overline{u_1}\|_{L^2}\nonumber\\&\le c\|u\|_{H^2}\|\partial_2u\|_{H^2}^2,\label{132}
\end{align}
\begin{align}
J_{233}&:=-\int \partial_2\widetilde{u_1}\partial_1\widetilde{\omega}\partial_2\omega\,dx\nonumber\\&\le c\|\partial_2\widetilde{u_1}\|_{L^2}^{\frac12}\|\partial_1\partial_2\widetilde{u_1}\|_{L^2}^{\frac12}\|\partial_1\widetilde{\omega}\|_{L^2}^{\frac12}\|\partial_2\partial_1\widetilde{\omega}\|_{L^2}^{\frac12}\|\partial_2\omega\|_{L^2}\nonumber\\&\le  c\|u\|_{H^2}\|\partial_2u\|_{H^2}^2.\label{133}
\end{align}
Inserting these upper bounds in (\ref{13-})   yields
\begin{align}\label{ohno3}
J_{23}\le c\|u\|_{H^2}\|\partial_2u\|_{H^2}^2.
\end{align}
To deal with $J_{24}$ we use the divergence-free condition of $u$, Lemma \ref{l1}, and the inequality (\ref{ohno}) in Lemma \ref{l4}
\begin{align}\label{ohno4}
J_{24}&:=-\int \partial_2u_2(\partial_2 \omega)^2\,dx\nonumber\\&=-\int \partial_1\widetilde{u_1}(\partial_2\overline{\omega}+\partial_2\widetilde{\omega})^2\,dx\nonumber\\&=-2\int \partial_1\widetilde{u_1}\partial_2\overline{\omega}\partial_2\widetilde{\omega}\,dx-2\int \partial_1\widetilde{u_1}(\partial_2\widetilde{\omega})^2\,dx\nonumber\\&\le c\Big(\|\partial_2\overline{\omega}\|_{L^2}+\|\partial_2\widetilde{\omega}\|_{L^2}\Big)\|\partial_1\widetilde{u_1}\|_{L^2}^{\frac12}\|\partial_2\partial_1\widetilde{u_1}\|_{L^2}^{\frac12}\|\partial_1\partial_2\widetilde{\omega}\|_{L^2}\nonumber\\&\le c\|u\|_{H^2}\|\partial_2u\|_{H^2}^2.
\end{align}
Collecting the bounds for $J_{21}$ through $J_{24}$ obtained in (\ref{ohno1}), (\ref{ohno2}), (\ref{ohno3}) and (\ref{ohno4}), we obtain
\begin{align}
J_2\le c\|u\|_{H^2}\|\partial_2u\|_{H^2}^2\,.\label{j2b}
\end{align}
Inserting $J_1=0$, (\ref{kk}) and (\ref{j2b}) in (\ref{h2energy}), yields
\ben
&& \frac{d}{dt}(\|\Delta u\|_{L^2}^{2}+\|\Delta \theta\|_{L^2}^{2}) + 2\nu \|\partial_2 \Delta u \|_{L^2}^{2}+ 2\eta  \|\partial_1 \Delta \theta \|_{L^2}^{2}\notag \\
&& \leq c\, \|(u,\theta)\|_{H^2}  \Big( \|\partial_2 u \|_{H^2}^2+  \| \partial_1\theta\|_{H^2}^2+\|\partial_1u_2\|_{L^2}^2\Big). \label{honesum}
\een
Integrating (\ref{honesum}) over $[0, t]$, we get
\ben
&& \|\Delta u(t)\|_{L^2}^{2}+\|\Delta \theta(t)\|_{L^2}^{2} +2 \nu  \int_{0}^{t}\|\partial_2  \Delta u \|_{L^2}^{2}d\tau + 2 \eta  \int_{0}^{t}\|\Delta \partial_1  \theta \|_{L^2}^{2}d\tau
\notag \\
&& \le \|\Delta u_0\|_{L^2}^{2}+\|\Delta \theta_0\|_{L^2}^{2} + c\, \int_0^t   \|(u,\theta)\|_{H^2}  \Big( \|\partial_2 u \|_{H^2}^2+  \| \partial_1\theta\|_{H^2}^2+\|\partial_1u_2\|_{L^2}^2\Big) \notag \\
&& \leq E(0)  + \, c\,  E(t)^{\frac{3}{2}}. \label{H2final}
\een
The subsequent step is to control the last piece in $E(t)$ defined by (\ref{ee}), namely
\begin{align}
\int_0^t \|g_0\p_1 u_2\|_{L^2}^2\,d\tau.\label{ff}
\end{align}
Our strategy is to make use of the special structure of the equation for $\theta$ in (\ref{oussama}) and replace $g_0\partial_1u_2$ in (\ref{ff}) via the equation of $\theta$,
\begin{equation}\label{p1u}
g_0\partial_1 u_2= -\partial_t \partial_1 \theta - \partial_1 (u \cdot \nabla \theta) + \eta \partial_{111} \theta. 
\end{equation}
Multiplying (\ref{p1u}) by $g_0\partial_1 u_2$ and then integrating over $\Omega$, we obtain
\begin{align}
\|g_0\partial_1 u_2\|_{L^2}^{2}&= -g_0\int \partial_t \partial_1 \theta\,  \partial_1 u_2\;dx - g_0\int \partial_1 u_2\, \partial_1 (u \cdot \nabla \theta)\;dx + g_0\eta \int \partial_1 u_2\,\partial_{111} \theta\;dx \nonumber\\
&:= K_1 +K_2 + K_3.\label{K**}
\end{align}
We bound $K_3$ as follows, 
\beq\label{k1up}
|K_3|\le \,\eta \|g_0\p_1 u_2\|_{L^2}\, \|\p_{111} \theta\|_{L^2} \le \frac12 \|g_0\p_1 u_2\|_{L^2}^2 + c\, \|\p_1\theta\|_{H^2}^2, 
\eeq
the term with unfavorable derivative $\p_1 u_2$ will be then absorbed by the left-hand side of (\ref{k1up}). 
\\
For $K_1$, we first shift the time derivative  
\beq\label{k3up}
K_1 = - g_0\frac{d}{dt} \int \partial_1 \theta\, \partial_1 u_2\, dx+g_0\int  \partial_1 \theta\, \partial_1 \partial_t u_2\, dx := K_{11} + K_{12}.
\eeq
Using the equation for the second component of the velocity, we write 
\be
K_{12} &=& -g_0\int \p_1\p_1\theta \, \p_t u_2\,dx \\
&=& -g_0\int \partial_{11}  \theta  (-(u\cdot \nabla u_2)- \partial_2 p+\nu \partial_{22}u_2+ g_0\theta)\;dx\nonumber\\
&= & g_0\int \partial_{11}  \theta\, (u\cdot \nabla u_2)\;dx\;+g_0\int \partial_{11}  \theta \;\partial_2 p\;dx\nonumber\\
&& - g_0\nu \int \partial_{11}  \theta\; \partial_{22}u_2\;dx\; - g_0^2\int \partial_{11}  \theta\; \theta\;dx.\label{eqp1}
\ee
Then, we apply the divergence operator to the velocity equation to express the pressure term as
\begin{equation}
p= -\Delta^{-1}\nabla \cdot (u\cdot \nabla u)+ g_0\Delta^{-1} \partial_2 \theta.\label{eqp}
\end{equation}
Inserting (\ref{eqp}) in (\ref{eqp1}), we obtain \begin{align}
K_{12} &= g_0\int \partial_{11}  \theta\, (u\cdot \nabla u_2)\;dx\;+g_0\int \partial_{11}  \theta\, (-\partial_2 \Delta^{-1}\nabla \cdot (u\cdot \nabla u))\;dx\nonumber\\ 
&-g_0\nu \int \partial_{11}  \theta\; \partial_{22}u_2\;dx\; -g_0^2\int \partial_{11}  \theta\; \partial_{11} \Delta^{-1}\theta\;dx\nonumber\\
&:= K_{121}+ K_{122}+ K_{123}+K_{124}.\label{HA1}
\end{align}
Due to Hölder's inequality and the fact that the double Riesz transform $\p_{11} \Delta^{-1}$ is bounded on $L^q$ for any $1<q<\infty$ (see, e.g., \cite{Stein}), we have 
\begin{align}
K_{124} := -g_0^2\int \p_1\theta\, \partial_{11} \Delta^{-1} \p_1 \theta\;dx \le c\, \|\p_1\theta\|_{L^2}\|\partial_{11} \Delta^{-1}\p_1\theta\|_{L^2}\le c\, \|\p_1\theta\|_{L^2}^2. \label{HA2}
\end{align}
Thanks to Hölder's inequality, 
\begin{align}
K_{123}:=-g_0\nu \int \partial_{11}  \theta\; \partial_{22}u_2\;dx \le c\, \|\p_{11}\theta\|_{L^2} \, \|\p_{22} u_2\|_{L^2}.\label{HA3}
\end{align}
By integration by parts, Hölder's inequality and the boundedness of the double Riesz transform, 
\begin{align}
K_{122} &:=g_0\int \partial_{11}  \theta\, (-\partial_2 \Delta^{-1}\nabla \cdot (u\cdot \nabla u))\;dx\nonumber\\&= g_0\int \partial_{1} \theta\; \partial_{12} \Delta^{-1}\nabla \cdot (u\cdot \nabla u)\;dx\nonumber\\
&\leq c \|\partial_{1} \theta\|_{L^2}\; \| \Delta^{-1}\partial_{12} \nabla \cdot (u\cdot \nabla u)\|_{L^2}\nonumber\\
&\leq \,c\, \|\partial_{1} \theta\|_{L^2}\,\|\partial_{2} (u\cdot \nabla u)\|_{L^2} \nonumber\\  
&\leq  \,c\, \|\partial_{1} \theta\|_{L^2}\,\|\partial_{2} u\cdot \nabla u+ u\cdot \nabla \partial_2 u\|_{L^2}  \nonumber\\
&\leq  \,c\, \|\partial_{1} \theta\|_{L^2} \left(\;\|\partial_{2}u\|_{L^4} \;\| \nabla u\|_{L^4}+ \|u\|_{\infty}\|\nabla \partial_2 u\|_{L^2}\right) \nonumber\\             
&\leq  \,c\,\|\partial_{1} \theta\|_{L^2} \,\|\partial_{2}u\|_{H^1}\;  \| \nabla u\|_{H^1}+
\,c\,\|\partial_{1} \theta\|_{L^2} \, \|u\|_{H^2}\|\nabla \partial_2 u\|_{L^2}.\label{HA44}
\end{align}
To deal with $K_{121}$, we rewrite it as
\begin{align}
K_{121} &= g_0\int \partial_{11}  \theta (u_1 \partial_1 u_2+ u_2 \partial_2 u_2) dx\nonumber\\
&=g_0\int \partial_{11}  \widetilde{\theta}\; u_1\; \partial_1 u_2 \;dx\;+ g_0\int \partial_{11}\widetilde{\theta}\; u_2\; \partial_2 u_2\; dx\nonumber\\
&=g_0\int \partial_{11}  \widetilde{\theta}\; \widetilde{u_1}\; \partial_1 \widetilde{u_2} \;dx\;+g_0\int \partial_{11}  \widetilde{\theta}\; \overline{u_1}\; \partial_1 \widetilde{u_2} \;dx+ g_0\int \partial_{11}\widetilde{\theta}\; u_2\; \partial_2 u_2\; dx\nonumber\\&:=K_{1211}+K_{1212}+K_{1213}.\label{HA111}
\end{align}
By Lemma \ref{l3}, the divergence-free condition of $u$ and Lemma \ref{l4}, 
\begin{align}
K_{1211} &:=g_0\int \partial_{11}  \widetilde{\theta}\; \widetilde{u_1}\; \partial_1 \widetilde{u_2} \;dx\nonumber\\&\le \, c\, \|\partial_{11} \theta\|_{L^2}^{\frac12}\|\partial_2\partial_{11} \theta\|_{L^2}\underbrace{\|\widetilde{u_1}\|_{L^2}^{\frac12}}_{\le \|\partial_1\widetilde{u_1}\|_{L^2}^{\frac12}}\underbrace{\|\partial_1\widetilde{u_1}\|_{L^2}^{\frac12}}_{=\|\partial_2\widetilde{u_2}\|_{L^2}^{\frac12}}\|\partial_1 \widetilde{u_2}\|_{L^2}\nonumber\\&\le \,c\, \|u\|_{H^2}\, \|\p_2u\|_{H^2}\, \|\partial_{1} \theta\|_{H^2}\nonumber\\&\le \,c\, \|u\|_{H^2}\Big(\|\p_2u\|_{H^2}^2+ \|\partial_{1} \theta\|_{H^2}^2\Big).\label{HA11}
\end{align}
Due to Lemma \ref{l1}, Hölder's inequality, Lemma \ref{delta216} and then Young's inequality,
\begin{align}
K_{1212} &:=g_0\int \partial_{11}  \widetilde{\theta}\; \overline{u_1}\; \partial_1 \widetilde{u_2} \;dx\nonumber\\&=g_0\int_{\mathbb{R}} \overline{u_1}\Big(\int_{\mathbb{T}}\partial_{11}  \widetilde{\theta} \partial_1 \widetilde{u_2}dx_1\Big)dx_2\nonumber\\&\le c\int_{\mathbb{R}} | \overline{u_1}|\|\partial_{11}  \widetilde{\theta}\|_{L^2_{x_1}}\| \partial_1 \widetilde{u_2} \|_{L^2_{x_1}}dx_2\nonumber\\&\le  c\|\overline{u_1}\|_{L^{\infty}_{x_2}}\|\partial_{11}\widetilde{\theta} \|_{L^2_{x_2}L^2_{x_1}}\|\partial_{1} \widetilde{u_2}\|_{L^2_{x_2}L^2_{x_1}}\nonumber\\& \le c\| \overline{u_1}\|_{H^1}\|\partial_{11}\widetilde{\theta} \|_{L^2}\|\partial_{1} \widetilde{u_2}\|_{L^2}\nonumber\\&\le c\|u\|_{H^2}\Big(\|\partial_{1} u_2\|_{L^2}^2+\|\partial_1\theta \|_{H^2}^2\Big).\label{HA12}
\end{align}
According to Lemma \ref{l3},
\begin{align}
K_{1213}&:=g_0\int \partial_{11}\widetilde{\theta}\; u_2\; \partial_2 u_2\; dx\nonumber\\&\le c\|\partial_{11}\widetilde{\theta}\|_{L^2}^{\frac12}\|\partial_1\partial_{11}\widetilde{\theta}\|_{L^2}^{\frac12}\| \partial_2 u_2\|_{L^2}^{\frac12}\|\partial_2\; \partial_2 u_2\|_{L^2}^{\frac12}\|u_2\|_{L^2}\nonumber\\&\le c\|u\|_{H^2}\|\partial_{2} u\|_{H^2}\|\partial_1\theta \|_{H^2}\nonumber\\&\le c\|u\|_{H^2}\Big(\|\partial_{2} u\|_{H^2}^2+\|\partial_1\theta \|_{H^2}^2\Big).\label{HA13}
\end{align}
Inserting (\ref{HA11}), (\ref{HA12}) and (\ref{HA13}) in (\ref{HA111}) we get
\begin{align}
K_{121}\le c\|u\|_{H^2}\Big(\|\partial_{2} u\|_{H^2}^2+\|\partial_{1} u_2\|_{L^2}^2+\|\partial_1\theta \|_{H^2}^2\Big).\label{HA4}
\end{align}
It then follows from (\ref{HA1}), (\ref{HA2}), (\ref{HA3}), (\ref{HA44}) and (\ref{HA4}) that 
\beq \label{k12up}
|K_{12}| \le \,c\|u\|_{H^2}\Big(\|\partial_{2} u\|_{H^2}^2+\|\partial_{1} u_2\|_{L^2}^2+\|\partial_1\theta \|_{H^2}^2\Big).
\eeq
We now need to bound $K_2$. We first split it into four terms,
\begin{align}
K_2 :&= - g_0\int \partial_1 u_2\, \partial_1 (u \cdot \nabla \theta)\;dx\nonumber\\&=  - g_0\int \partial_1 u_2\, \partial_1 u_1\, \partial_1  \theta\,  dx - g_0\int \partial_1 u_2  u_1 \partial_1 \partial_1  \theta\, dx\nonumber\\
&\;\;\;\;- g_0\int \partial_1 u_2 \partial_1 u_2 \partial_2  \theta\,  dx - g_0\int \partial_1 u_2  u_2 \partial_1 \partial_2  \theta\, dx\nonumber\\:&=K_{21}+K_{22}+K_{23}+K_{24}. \label{K2}
\end{align}
Due to $\partial_1\theta=\partial_1\widetilde{\theta}$, Lemma \ref{l3} and Young's inequality
\begin{align}\label{K21}
K_{21}&:= - g_0\int \partial_1 u_2\, \partial_1 u_1\, \partial_1  \theta\,  dx\nonumber\\&= - g_0\int \partial_1 u_2\, \partial_1 u_1\, \partial_1  \widetilde{\theta}\,  dx\nonumber\\&\le c  \|\partial_1 \widetilde{\theta}\|_{L^2}^{\frac{1}{2}} \|\partial_1 \partial_1 \widetilde{\theta}\|_{L^2}^{\frac{1}{2}} \|\partial_2 u_2\|_{L^2}^{\frac{1}{2}}\|\partial_2 \partial_2 u_2\|_{L^2}^{\frac{1}{2}} \|\partial_1u_2\|_{L^2} \nonumber\\&\le c\|u\|_{H^2}\|\partial_{2} u\|_{H^2}\|\partial_1\theta \|_{H^2}\nonumber\\&\le c\|u\|_{H^2}\Big(\|\partial_{2} u\|_{H^2}^2+\|\partial_1\theta \|_{H^2}^2\Big).
\end{align}
Using Lemma \ref{l1} and invoking the decompositions $u=\overline{u}+\widetilde{u}$ we write $K_{22}$ as
\begin{align}
K_{22}&:=-g_0\int \partial_1 u_2  u_1 \partial_1 \partial_1  \theta\, dx\nonumber\\&=g_0\int \partial_{11}  \widetilde{\theta}\; \widetilde{u_1}\; \partial_1 \widetilde{u_2} \;dx\;+g_0\int \partial_{11}  \widetilde{\theta}\; \overline{u_1}\; \partial_1 \widetilde{u_2} \;dx\nonumber\\&:=K_{221}+K_{222}.\label{ci}
\end{align}
By Lemmas \ref{l3}, \ref{l4} and the divergence-free condition of $u$,
\begin{align}\label{ci1}
K_{221} &:=g_0\int \partial_{11}  \widetilde{\theta}\; \widetilde{u_1}\; \partial_1 \widetilde{u_2} \;dx\nonumber\\&\le \, c\, \|\partial_{11} \theta\|_{L^2}^{\frac12}\|\partial_2\partial_{11} \theta\|_{L^2}\underbrace{\|\widetilde{u_1}\|_{L^2}^{\frac12}}_{\le \|\partial_1\widetilde{u_1}\|_{L^2}^{\frac12}}\underbrace{\|\partial_1\widetilde{u_1}\|_{L^2}^{\frac12}}_{=\|\partial_2\widetilde{u_2}\|_{L^2}^{\frac12}}\|\partial_1 \widetilde{u_2}\|_{L^2}\nonumber\\&\le \,c\, \|u\|_{H^2}\, \|\p_2u\|_{H^2}\, \|\partial_{1} \theta\|_{H^2}\nonumber\\&\le \,c\, \|u\|_{H^2}\Big(\|\p_2u\|_{H^2}^2+ \|\partial_{1} \theta\|_{H^2}\Big).
\end{align}
To bound $K_{222}$, we first use Lemma \ref{l1}, Hölder's inequality and then Lemma \ref{delta216} to obtain
\begin{align}\label{ci2}
K_{222} &:=g_0\int \partial_{11}  \widetilde{\theta}\; \overline{u_1}\; \partial_1 \widetilde{u_2} \;dx\nonumber\\&=g_0\int_{\mathbb{R}} \overline{u_1}\Big(\int_{\mathbb{T}}\partial_{11}  \widetilde{\theta} \partial_1 \widetilde{u_2}dx_1\Big)dx_2\nonumber\\&\le c\int_{\mathbb{R}} | \overline{u_1}|\|\partial_{11}  \widetilde{\theta}\|_{L^2_{x_1}}\| \partial_1 \widetilde{u_2} \|_{L^2_{x_1}}dx_2\nonumber\\&\le c \|\overline{u_1}\|_{L^{\infty}_{x_2}}\|\partial_{11}\widetilde{\theta} \|_{L^2_{x_2}L^2_{x_1}}\|\partial_{1} \widetilde{u_2}\|_{L^2_{x_2}L^2_{x_1}}\nonumber\\& \le c\| \overline{u_1}\|_{H^1}\|\partial_{11}\widetilde{\theta} \|_{L^2}\|\partial_{1} \widetilde{u_2}\|_{L^2}\nonumber\\&\le c\|u\|_{H^2}\Big(\|\partial_{1} u_2\|_{L^2}^2+\|\partial_1\theta \|_{H^2}^2\Big).
\end{align}
Then (\ref{ci1}), (\ref{ci2}) and (\ref{ci}) together leads to
\begin{align}
K_{22}\le c\|u\|_{H^2}\Big(\|\partial_{2} u\|_{H^2}^2+\|\partial_{1} u_2\|_{L^2}^2+\|\partial_1\theta \|_{H^2}^2\Big).\label{K22}
\end{align}
By $\partial_1u_2=\partial_1\widetilde{u_2}$ and $\theta=\widetilde{\theta}+\overline{\theta}$, we rewrite $K_{23}$ as
\begin{align}
K_{23}&:=- g_0\int \partial_1 u_2 \partial_1 u_2 \partial_2  \theta\,  dx \nonumber\\&=- g_0\int \partial_1 \widetilde{u_2} \partial_1 \widetilde{u_2} \partial_2  \overline{\theta}\,  dx - g_0\int \partial_1 \widetilde{u_2} \partial_1 \widetilde{u_2} \partial_2  \widetilde{\theta}\,  dx \nonumber\\&:=K_{231}+K_{232}.\label{di}
\end{align}
To estimate $K_{231}$, we make use of Lemma \ref{l1}, Hölder's inequality and then Lemma \ref{delta216} to get
\begin{align}\label{di1}
K_{231} &:=g_0\int \partial_{1}  \widetilde{u_2}\partial_{1}  \widetilde{u_2} \partial_2\overline{\theta} \;dx\nonumber\\&=g_0\int_{\mathbb{R}} \partial_2\overline{\theta}\Big(\int_{\mathbb{T}}\partial_{1}  \widetilde{u_2}\partial_{1}  \widetilde{u_2}dx_1\Big)dx_2\nonumber\\&\le c\int_{\mathbb{R}} | \partial_2\overline{\theta}|\|\partial_{1}  \widetilde{u_2}\|_{L^2_{x_1}}\|\partial_{1}  \widetilde{u_2}\|_{L^2_{x_1}}dx_2\nonumber\\&\le  c\|\partial_2\overline{\theta}\|_{L^{\infty}_{x_2}}\|\partial_{1}  \widetilde{u_2}\|_{L^2_{x_2}L^2_{x_1}}\|\partial_{1}  \widetilde{u_2}\|_{L^2_{x_2}L^2_{x_1}}\nonumber\\& \le c\| \partial_2\overline{\theta}\|_{H^1}\|\partial_{1}  \widetilde{u_2}\|_{L^2}^2\nonumber\\&\le c\|\theta\|_{H^2}\|\partial_{1} u_2\|_{L^2}^2.
\end{align}
Via Lemma \ref{l3},
\begin{align}\label{di2}
K_{232} &:=g_0\int \partial_{1}  \widetilde{u_2}\partial_{1}  \widetilde{u_2} \partial_2\widetilde{\theta} \;dx\nonumber\\&\le c  \|\partial_2 \widetilde{\theta}\|_{L^2}^{\frac{1}{2}} \|\partial_1 \partial_2 \widetilde{\theta}\|_{L^2}^{\frac{1}{2}} \| \partial_1 u_2\|_{L^2}^{\frac{1}{2}}\|\partial_2 \partial_1 u_2\|_{L^2}^{\frac{1}{2}}\|\partial_1 u_2\|_{L^2}\nonumber\\&\le c\|\theta\|_{H^2}^{\frac{1}{2}}\|u\|_{H^2}^{\frac{1}{2}} \|\partial_1\theta\|_{H^2}^{\frac{1}{2}}\|\partial_1 u_2\|_{L^2}^{\frac{3}{2}}\nonumber\\&\le c\|(u,\theta)\|_{H^2}\Big(\|\partial_1\theta\|_{H^2}^2+\|\partial_1 u_2\|_{L^2}^2\Big).
\end{align}
Inserting the bounds for $K_{231}$ and $K_{232}$ in (\ref{di}), we find
\begin{align}
K_{23}\le c\|(u,\theta)\|_{H^2}\Big(\|\partial_1\theta\|_{H^2}^2+\|\partial_1 u_2\|_{L^2}^2\Big).\label{K23}
\end{align}
The last term $K_{24}$ can also be bounded due to the fact that $\overline{u_2}=0$, Lemmas \ref{l3} and \ref{l4}
\begin{align}
K_{24}&:= -g_0 \int \partial_1 u_2  u_2 \partial_1 \partial_2  \theta\, dx\nonumber\\&= -g_0 \int \partial_1 u_2  \widetilde{u_2} \partial_1 \partial_2  \theta\, dx\nonumber\\&\le c \underbrace{\|\widetilde{u_2}\|_{L^2}^{\frac{1}{2}}}_{\le \|\partial_1\widetilde{u_2}\|_{L^2}^{\frac{1}{2}}} \|\partial_1 \widetilde{u_2}\|_{L^2}^{\frac{1}{2}} \|\partial_1 u_2\|_{L^2}^{\frac{1}{2}} \|\partial_2 \partial_1 u_2\|_{L^2}^{\frac{1}{2}} \|\partial_1 \partial_2 \theta\|_{L^2}\nonumber\\&\le c\|\theta\|_{H^2}^{\frac{1}{2}}\|u\|_{H^2}^{\frac{1}{2}} \|\partial_1\theta\|_{H^2}^{\frac{1}{2}}\|\partial_1 u_2\|_{L^2}^{\frac{3}{2}}\nonumber\\&\le c\|(u,\theta)\|_{H^2}\Big(\|\partial_1\theta\|_{H^2}^2+\|\partial_1 u_2\|_{L^2}^2\Big).\label{K24}
\end{align}
Inserting (\ref{K21}), (\ref{K22}), (\ref{K23}), (\ref{K24}), in (\ref{K2}) we obtain 
\ben
K_2 \le c\|(u,\theta)\|_{H^2}\Big(\|\partial_1\theta\|_{H^2}^2+\|\partial_1 u_2\|_{L^2}^2+\|\partial_2 u\|_{H^2}^2\Big). \label{k2up}
\een 
Collecting the bounds obtained above for $K_1$ through $K_3$ in (\ref{k1up}), (\ref{k3up}), (\ref{k12up}) and (\ref{k2up}) and inserting them in (\ref{K**}), we get 
\begin{align}
\frac12 \, \|g_0\p_1 u_2\|_{L^2}^2 &\le \,c\, \|\p_1\theta\|_{H^2}^2 -g_0\frac{d}{dt}\int \p_1\theta\, \p_1 u_2\,dx \nonumber\\
&+c\|(u,\theta)\|_{H^2}\Big(\|\partial_1\theta\|_{H^2}^2+\|\partial_2 u\|_{H^2}^2+\|\partial_1 u_2\|_{L^2}^2\Big).\label{mo}
\end{align}
Integrating (\ref{mo}) over the time interval $[0, t]$, we find
\ben
\int_0^t \|g_0\p_1 u_2\|_{L^2}^2 \,d\tau &\le& \,c\, \int_0^t \|\p_1\theta\|_{H^2}^2 \,d\tau
-2g_0\int \p_1\theta\, \p_1 u_2\,dx + 2g_0\int \p_1\theta_0\, \p_1 u_{02}\,dx \notag \\
&& + \,c\,\int_0^t \|(u,\theta)\|_{H^2}\Big(\|\partial_1\theta\|_{H^2}^2+\|\partial_2 u\|_{H^2}^2+\|\partial_1 u_2\|_{L^2}^2\Big) \,d\tau \notag\\
&\le& \,c\, \int_0^t \|\p_1\theta\|_{H^2}^2 \,d\tau + \,c\, \int_0^t \|\p_2 u\|_{H^2}^2\,d\tau
 + \,c\, (\|u\|_{H^1}^2 + \|\theta\|_{H^1}^2)\notag \\
&& + \,c\, (\|u_0\|_{H^1}^2 + \|\theta_0\|_{H^1}^2) + c\, E(t)^{\frac32}. \label{defb}
\een

To conclude, we combine the $H^1$-bound in (\ref{h1final}), the homogeneous $H^2$-bound in (\ref{H2final})
and the bound for the extra regularization term in (\ref{defb}). When doing so, we need eliminate 
the quadratic terms on the right-hand side of (\ref{defb}) by the corresponding terms 
on the left-hand side, then it suffices to multiply both sides of (\ref{defb}) by a suitable small coefficient
$\delta>0$. Taking $(\ref{h1final}) + (\ref{H2final}) + \delta\,(\ref{defb})$, leads to
\ben
&& \|u(t)\|_{H^2}^{2} +\|\theta(t)\|_{H^2}^{2} + 2 \nu \int_{0}^{t} \|\partial_2 u\|_{H^2}^{2}d\tau  
+ 2\eta \int_{0}^t \|\partial_1 \theta\|_{H^2}^{2} d\tau+ \delta \int_{0}^{t}\|g_0\partial_1 u_2\|_{L^2}^{2} \notag \\
&&\leq \,  E(0) + \,c\,E(t)^{\frac{3}{2}}\;+\, c\,\delta\, (\|u(t)\|_{H^{2}}^{2} +\|\theta(t)\|_{H^{2}}^{2}) + \,c\,\delta\, (\|u_0\|_{H^{2}}^{2} + \|\theta_0\|_{H^{2}}^{2}) \notag\\
&&\quad  + \,c\,\delta \int_{0}^{t}\|\partial_2 u \|_{H^2}^{2} d\tau 
+ \,c\,\delta\int_{0}^{t}\|\partial_1 \theta  \|_{H^2}^{2} d\tau 
+ \,c\,\delta\, E(t)^{\frac{3}{2}}. \label{gir}
\een
If  $\delta>0$ is chosen to be sufficiently small, say 
$$
c\, \delta \le \frac12, \quad c\, \delta \le \nu, \quad c\, \delta \le \eta,
$$
then (\ref{gir}) gives 
\begin{equation}\label{ggd}
E(t)\leq \,C_1\,  E(0)+ \, C_2\, E(t)^{\frac{3}{2}},
\end{equation}
where $C_1$ and $C_2$ are positive constants. The proof of the desired stability result, is then completed by applying the bootstrapping argument on (\ref{ggd}). Indeed, if the initial data $(u_0, \theta_0)$, is sufficiently small, say,
\begin{align}
E(0)=\|(u_0, \theta_0)\|_{H^2}^2 \le \varepsilon^2 := \frac1{16 C_1C_2^2},\label{E0}
\end{align}
then (\ref{ggd}) implies 
$$
\|(u(t), \theta(t))\|_{H^2}^2 \le 2 C_1\, \varepsilon^2.
$$
To initiate the bootstrapping argument, we make the ansatz that, for $t\le T$
\beq\label{lov1}
E(t) \le \frac1{4 C_2^2},
\eeq
and we then show that $E(t)$ actually admits an even smaller bound by taking the initial $H^2$-norm $E(0)$ sufficiently small. In fact,  Inserting (\ref{lov1}) in (\ref{ggd}) yields 
\beno
E(t) &\leq& \,C_1\,  E(0)+ \, C_2\, E(t)^{\frac{3}{2}} 
\\
&\le& C_1 \, \varepsilon^2 + \, C_2\, \frac1{2C_2}\, E(t).
\eeno
That is, 
$$
\frac12 E(t) \le C_1 \, \varepsilon^2\quad\mbox{or}\quad E(t) \le 2\, C_1 \, \frac{1}{16 C_1 C_2^2} = \frac{1}{8 C_2^2} = 2C_1\, \epsilon^2, \quad\mbox{for all $t\le T$}.
$$
The bootstrapping argument then assesses that (\ref{lov1}) holds for all time when $E(0)$ satisfies (\ref{E0}). This establishes the global stability.
\vskip .1in 
Finally, we establish the uniqueness of $H^2$-solutions to (\ref{oussama}). Assume 
that $(u^{(1)}, p^{(1)}, \theta^{(1)})$ and $(u^{(2)}, p^{(2)}, \theta^{(2)})$ are two solutions of (\ref{oussama}) with one of them in the $H^2$-regularity class say $(u^{(1)}, \theta^{(1)})\in L^{\infty}(0, T; H^2) $.  The difference between the two solutions $({u}^*, {p}^*, {\theta}^*)$ with 
$$
{u}^*=u^{(2)}- u^{(1)},\quad {p}^*=p^{(2)}- p^{(1)} \quad \text{and} \quad {\theta}^*=\theta^{(2)}- \theta^{(1)} 
$$
verifies 
\begin{equation}\label{uniqueness}\begin{cases}
\begin{aligned}
&\partial_t {u}^*+u^{(2)}\cdot \nabla {u}^*+ {u}^* \cdot \nabla u^{(1)}+\nabla {p}^*= \nu \partial_{22}{u}^*+ g_0{\theta}^*{\mathbf e}_2, \\
&\partial_t {\theta}^* + u^{(2)} \cdot \nabla {\theta}^* + {u}^*\cdot \nabla \theta^{(1)} + g_0{u_2}^*\;= \eta \partial_{11} {\theta}^*, \\
&\nabla \cdot {u}^*=0,\\
& u^*(x,0) =0, \quad \theta^*(x,0) =0.
\end{aligned}
\end{cases}
\end{equation}
We estimate the difference $({u}^*, {p}^*, {\theta}^*)$ in $L^{2}(\Omega)$. Taking the $L^2$-inner product of (\ref{uniqueness}) with $({u}^*, {\theta}^*)$ and applying the divergence-free condition, we get
\begin{align}
\frac{1}{2}\frac{d}{dt}\|({u}^*, {\theta}^*)\|_{L^2}^{2}+ \nu \|\partial_{2}{u}^*\|_{L^2}^{2}+ \eta \|\partial_{1}{\theta}^*\|_{L^2}^{2}&= -\int {u}^* \cdot \nabla u^{(1)}\cdot {u}^*\; dx - \int {u}^* \cdot \nabla \theta^{(1)}\cdot {\theta}^*\; dx\nonumber\\&=I_1+I_2.\label{=1}
\end{align}
Due to Lemma \ref{l2} and the uniformly global bound for $\|{u}^{(1)}\|_{H^2}$,
\begin{align}\label{unique11}
I_1&:=-\int{u}^{*}\cdot\nabla u^{(1)}\cdot{u}^{*}\,dx\nonumber\\&\le c\underbrace{\|\nabla{u}^{(1)}\|_{L^2}^{\frac12}\Big(\|\nabla{u}^{(1)}\|_{L^2}+\|\partial_1\nabla{u}^{(1)}\|_{L^2}\Big)^{\frac12}}_{\le c}\|{u}^{*}\|_{L^2}^{\frac12}\|\partial_2{u}^{*}\|_{L^2}^{\frac12}\|{u}^{*}\|_{L^2}\nonumber\\&\le c\|{u}^{*}\|_{L^2}^{\frac32}\|\partial_2{u}^{*}\|_{L^2}^{\frac12}\nonumber\\&\le c\|{u}^{*}\|_{L^2}^2+\frac{\nu}{4}\|\partial_2{u}^{*}\|_{L^2}^2.
\end{align}
Similarly, by Lemma \ref{l2} and the uniformly global bound for $\|{\theta}^{(1)}\|_{H^2}$,
\begin{align}\label{unique21}
I_2&:=-\int{u}^{*}\cdot\nabla \theta^{(1)}\cdot{\theta}^{*}\,dx\nonumber\\&\le c\underbrace{\|\nabla{\theta}^{(1)}\|_{L^2}^{\frac12}\Big(\|\nabla{\theta}^{(1)}\|_{L^2}+\|\partial_1\nabla{\theta}^{(1)}\|_{L^2}\Big)^{\frac12}}_{\le c}\|u^{*}\|_{L^2}^{\frac12}\|\partial_2u^{*}\|_{L^2}^{\frac12}\|\theta^{*}\|_{L^2}\nonumber\\&\le c\|u^{*}\|_{L^2}^{\frac12}\|\partial_2u^{*}\|_{L^2}^{\frac12}\|\theta^{*}\|_{L^2}\nonumber\\&\le c\|\theta^{*}\|_{L^2}\Big(\|u^{*}\|_{L^2}+\|\partial_2u^{*}\|_{L^2}\Big)\nonumber\\&\le c\|\theta^{*}\|_{L^2}^2+c\|u^{*}\|_{L^2}^2+\frac{\nu}{4}\|\partial_2u^{*}\|_{L^2}^2.
\end{align}
Putting the estimates (\ref{unique11}) and (\ref{unique21}) in (\ref{=1}) leads to
\begin{align*}
\frac12&\frac{d}{dt}\|({u}^{*},{\theta}^{*})\|^2_{L^2}+\nu\|\partial_2{u}^{*}\|_{L^2}^2+\eta\|\partial_1{\theta}^{*}\|_{L^2}^2\\&\le c\Big(\|{u}^{*}\|_{L^2}^2+\|{\theta}^{*}\|_{L^2}^2\Big)+\frac{\nu}{2}\|\partial_2{u}^{*}\|_{L^2}^2
\end{align*}
or
\begin{align}
\frac{d}{dt}\|({u}^{*},{\theta}^{*})\|^2_{L^2}+\nu\|\partial_2{u}^{*}\|_{L^2}^2+\eta\|\partial_1{\theta}^{*}\|_{L^2}^2\le c\|({u}^{*},{\theta}^{*})\|^2_{L^2}.\label{i1}
\end{align}
Grönwall’s inequality then implies,
$$
\|{u}^*(t)\|_{L^2}= \|{\theta}^*(t)\|_{L^{2}} =0.
$$
In other words, these two solutions coincide.
This finishes the proof of Theorem \ref{TH}.\end{proof}
\section{Decay Rates Result} 
\label{decay}
This section is devoted to the proof the decay rates presented in Theorem \ref{TH1}.
\begin{proof}[Proof of Theorem \ref{TH1}]
Taking the
average of the system (\ref{oussama}) and using the fact that $\overline{u\cdot\nabla\overline{u}}=0$, we write the equations of $(\overline{u},\overline{\theta})$, 
\begin{align}\label{horizontalaverage}
\begin{cases}
	\partial_t\overline{u}+\overline{u\cdot\nabla\widetilde{u}}+\begin{pmatrix}
	0\\\partial_2\overline{p}
	\end{pmatrix}=g_0\begin{pmatrix}
	0\\\overline{\theta}
	\end{pmatrix}+\nu\partial_2^2\overline{u}\,,\\\partial_t\overline{\theta}+\overline{u\cdot\nabla\widetilde{\theta}}=0,
\end{cases}
\end{align}
where $g_0$ is a negative constant.
By subtracting (\ref{horizontalaverage}) from (\ref{oussama}), we get
\begin{align}\label{*}
\begin{cases}
	\partial_t\widetilde{u}+\widetilde{u\cdot\nabla\widetilde{u}}+\widetilde{u_2}\partial_2\overline{u}-\nu\partial_2^2\widetilde{u}+\nabla \widetilde{p}=g_0\widetilde{\theta}e_2\,,\\\partial_t\widetilde{\theta}+\widetilde{u\cdot\nabla\widetilde{\theta}}+\widetilde{u_2}\partial_2\overline{\theta}-\eta\partial_1^2\widetilde{\theta}+ g_0\widetilde{u_2}=0.
\end{cases}
\end{align}
Taking the $L^2$-inner product of $(\widetilde{u},\widetilde{\theta})$ with (\ref{*}) yields, 
\begin{align}
&\frac12\frac{d}{dt}\Big(\|\widetilde{u}\|_{L^2}^2+\|\widetilde{\theta}\|_{L^2}^2\Big)+\nu\|\partial_2\widetilde{u}\|_{L^2}^2+\eta\|\partial_1\widetilde{\theta}\|_{L^2}^2\nonumber\\&=-\int\widetilde{u\cdot\nabla\widetilde{u}}\cdot\widetilde{u}dx-\int\widetilde{ u_2}\partial_2\overline{u}\cdot\widetilde{u}dx-\int\widetilde{u\cdot\nabla\widetilde{\theta}}\cdot\widetilde{\theta}dx-\int \widetilde{u_2}\partial_2\overline{\theta}\cdot\widetilde{\theta}dx\nonumber\\&:=A_1+A_2+A_3+A_4.\label{AA}
\end{align}
Now, we estimate $A_1$ through $A_4$. The first term $A_1$ is clearly zero due to $\nabla\cdot u=0$ and Lemma \ref{l1},
\begin{align}
A_1:=-\int\widetilde{u\cdot\nabla\widetilde{u}}\cdot\widetilde{u}dx=\underbrace{-\int u\cdot\nabla\widetilde{u}\cdot\widetilde{u}dx}_{=0}+\underbrace{\int\overline{u\cdot\nabla\widetilde{u}}\cdot\widetilde{u}dx}_{=0}=0.\label{A_11}
\end{align}
Likewise, \begin{align}A_3:=\int\widetilde{u\cdot\nabla\widetilde{\theta}}\cdot\widetilde{\theta}dx =0.\label{A_13}\end{align}
To bound $A_2$ we first write it as,
\begin{align}
A_2&:=-\int \widetilde{u_2}\partial_2\overline{u}\cdot\widetilde{u}dx\nonumber\\&:=-\int \widetilde{u_2}\partial_2\overline{u_1}\widetilde{u_1}dx-\int \widetilde{u_2}\partial_2\overline{u_2}\widetilde{u_2}dx\nonumber\\&:=A_{21}+A_{22}.\label{decA2}
\end{align}
Due to the fact that $\overline{u_2}=0$ we have,
\begin{align}
A_{22}=-\int \widetilde{u_2}\partial_2\overline{u_2}\widetilde{u_2}dx=0.\label{A_122}
\end{align}
Applying Lemmas \ref{l3} and \ref{l4}, the divergence-free condition of $u$ and then Young's inequality leads to
\begin{align}
A_{21}&:=-\int \widetilde{u_2}\partial_2\overline{u_1}\widetilde{u_1}dx\nonumber\\& \le c\|\partial_2\overline{u}\|_{L^2}\|\widetilde{u_2}\|_{L^2}^{\frac12}\|\partial_2\widetilde{u_2}\|_{L^2}^{\frac12}\underbrace{\|\widetilde{u_1}\|_{L^2}^{\frac12}}_{\le \|\partial_1\widetilde{u_1}\|_{L^2}^{\frac12}= \|\partial_2\widetilde{u_2}\|_{L^2}^{\frac12}}\underbrace{\|\partial_1\widetilde{u_1}\|_{L^2}^{\frac12}}_{\;\;=\|\partial_2\widetilde{u_2}\|_{L^2}^{\frac12}}\nonumber\\& \le c\|u\|_{H^2}\|\widetilde{u_2}\|_{L^2}^{\frac12}\|\partial_2\widetilde{u}\|_{L^2}^{\frac32}\nonumber\\&\le c\|u\|_{H^2}\Big(\|\widetilde{u_2}\|_{L^2}^{2}+\|\partial_2\widetilde{u}\|_{L^2}^{2}\Big).\label{A_121}
\end{align}
Inserting (\ref{A_122}) and (\ref{A_121}) in (\ref{decA2}) we get
\begin{align}
A_2\le c\|u\|_{H^2}\Big(\|\widetilde{u_2}\|_{L^2}^{2}+\|\partial_2\widetilde{u}\|_{L^2}^{2}\Big). \label{A_12}
\end{align}
The last term $A_4$ can be bounded via Lemma \ref{l1}, Hölder's inequality, and Lemmas \ref{delta216} and  \ref{l4},
\begin{align}
A_4&:=-\int \widetilde{u_2}\partial_2\overline{\theta}\cdot\widetilde{\theta}dx\nonumber\\&=-\int_{\mathbb{R}} \partial_2\overline{\theta}\Big(\int_{\mathbb{T}}  \widetilde{\theta} \widetilde{u_2}dx_1\Big)dx_2\nonumber\\&\le c\|\partial_2\overline{\theta}\|_{L^{\infty}_{x_2}}\|\widetilde{u_2}\|_{L^2}\|\widetilde{\theta}\|_{L^2}\nonumber\\&\le c \| \partial_2\overline{\theta}\|_{H^{1}}\|\widetilde{u_2}\|_{L^2}\|\partial_1\widetilde{\theta}\|_{L^2}\nonumber \\&\le c \| \theta\|_{H^{2}}\|\widetilde{u_2}\|_{L^2}\|\partial_1\widetilde{\theta}\|_{L^2}\nonumber\\&\le c\|\theta\|_{H^2}\Big(\|\widetilde{u_2}\|_{L^2}^2+\|\partial_1\widetilde{\theta}\|_{L^2}^{2}\Big).\label{A_14}
\end{align}
Combining the estimates of $A_1$ through $A_4$, we get
\begin{align}
\frac12\frac{d}{dt}\Big(\|\widetilde{u}\|_{L^2}^2&+\|\widetilde{\theta}\|_{L^2}^2\Big)+\nu\|\partial_2\widetilde{u}\|_{L^2}^2+\eta\|\partial_1\widetilde{\theta}\|_{L^2}^2\nonumber\\&\le c\|(u,\theta)\|_{H^2}\Big(\|\widetilde{u_2}\|_{L^2}^2+\|\partial_2\widetilde{u}\|_{L^2}^2+\|\partial_1\widetilde{\theta}\|_{L^2}^2\Big).\label{M2}
\end{align}
Applying $\nabla$ to (\ref{*}), we write
\begin{align}\label{15}
\begin{cases}
	\partial_t\nabla\widetilde{u}+\nabla(\widetilde{u\cdot\nabla\widetilde{u}})+\nabla(\widetilde{u_2}\partial_2\overline{u})-\nu\partial_2^2\nabla\widetilde{u}+\nabla\nabla \widetilde{p}=g_0\nabla(\widetilde{\theta}e_2)\,,\\\partial_t\nabla\widetilde{\theta}+\nabla(\widetilde{u\cdot\nabla\widetilde{\theta}})+\nabla(\widetilde{u_2}\partial_2\overline{\theta})-\eta\partial_1^2\nabla\widetilde{\theta}+ g_0\nabla\widetilde{u_2}=0.
\end{cases}
\end{align}
Dotting (\ref{15}) by $(\nabla\widetilde{u},\nabla\widetilde{\theta})$, we get
\begin{align}
&\frac12\frac{d}{dt}\Big(\|\nabla\widetilde{u}(t)\|_{L^2}^2+\|\nabla\widetilde{\theta}(t)\|_{L^2}^2\Big)+\nu\|\partial_2\nabla\widetilde{u}\|_{L^2}^2+\eta\|\partial_1\nabla\widetilde{\theta}\|_{L^2}^2\nonumber\\&=-\int\nabla(\widetilde{u\cdot\nabla\widetilde{u}})\cdot\nabla\widetilde{u}dx-\int \nabla(\widetilde{u_2}\partial_2\overline{u})\cdot\nabla\widetilde{u}dx\nonumber\\&\quad-\int\nabla(\widetilde{u\cdot\nabla\widetilde{\theta}})\cdot\nabla\widetilde{\theta}dx-\int \nabla(\widetilde{u_2}\partial_2\overline{\theta})\cdot\nabla\widetilde{\theta}dx\nonumber\\&:=B_1+B_2+B_3+B_4.\label{B}
\end{align}
The terms $ B_1$ through $B_4$ can be bounded as follows. We start with $B_1$. Using Lemma \ref{l1}, we write $B_1$ as,
\begin{align}
B_1&:=-\int\nabla(\widetilde{u\cdot\nabla\widetilde{u}})\cdot\nabla\widetilde{u}dx\nonumber\\&=-\int\nabla(u\cdot\nabla\widetilde{u})\cdot\nabla\widetilde{u}\;dx+\underbrace{\int\nabla(\overline{u\cdot\nabla\widetilde{u}})\cdot\nabla\widetilde{u}dx}_{=0}\nonumber\\&=-\int \partial_1u_1\partial_1\widetilde{u} \cdot\partial_1\widetilde{u}dx-\int \partial_1u_2\partial_2\widetilde{u}\cdot\partial_1\widetilde{u}dx\notag\\
&\quad -\int \partial_2u_1\partial_1\widetilde{u}\cdot\partial_2\widetilde{u}dx-\int \partial_2u_2\partial_2\widetilde{u}\cdot\partial_2\widetilde{u}dx\nonumber\\&:=B_{11}+B_{12}+B_{13}+B_{14}.\label{B_1}
\end{align}
Further, we divide the first term $B_{11}$ into the following two integrals,
\begin{align}
B_{11}&:=-\int \partial_1u_1\partial_1\widetilde{u}\cdot\partial_1\widetilde{u}dx\nonumber\\&=-\int \partial_1\widetilde{u_1}\partial_1\widetilde{u_1}\partial_1\widetilde{u_1}dx-\int \partial_1\widetilde{u_1}\partial_1\widetilde{u_2}\partial_1\widetilde{u_2}dx\nonumber\\&:=B_{111}+B_{112}.\label{B_{11=}}
\end{align}
By the divergence-free condition of $u$ and Lemma \ref{l3}
\begin{align}
B_{111}&:=-\int \partial_1\widetilde{u_1}\partial_1\widetilde{u_1}\partial_1\widetilde{u_1}dx\nonumber\\&=\int \partial_2\widetilde{u_2}\partial_2\widetilde{u_2}\partial_2\widetilde{u_2}dx\nonumber\\&\le c\|\partial_2\widetilde{u_2}\|_{L^2}\|\partial_2\widetilde{u_2}\|_{L^2}^{\frac12}\|\partial_1\partial_2\widetilde{u_2}\|_{L^2}^{\frac12}\|\partial_2\widetilde{u_2}\|_{L^2}^{\frac12}\|\partial_2\partial_2\widetilde{u_2}\|_{L^2}^{\frac12}
\nonumber\\&\le c\|u\|_{H^2}\|\partial_2\widetilde{u}\|_{H^1}^2.\label{B_{111}}
\end{align}
Due to $\nabla\cdot u=0$, integration by parts and Lemma, \ref{l3}\begin{align}
B_{112}&:=-\int \partial_1\widetilde{u_1}\partial_1\widetilde{u_2}\partial_1\widetilde{u_2}dx\nonumber\\&=\int \partial_2\widetilde{u_2}\partial_1\widetilde{u_2}\partial_1\widetilde{u_2}dx\nonumber\\&=2\int \widetilde{u_2}\partial_2\partial_1\widetilde{u_2}\partial_1\widetilde{u_2}dx\nonumber\\&\le c\|\partial_2\partial_1\widetilde{u_2}\|_{L^2}\|\widetilde{u_2}\|_{L^2}^{\frac12}\|\partial_2\widetilde{u_2}\|_{L^2}^{\frac12}\|\partial_1\widetilde{u_2}\|_{L^2}^{\frac12}\|\partial_1\partial_1\widetilde{u_2}\|_{L^2}^{\frac12}
\nonumber\\&\le c\|u\|_{H^2}\|\widetilde{u_2}\|_{L^2}^{\frac12}\|\partial_2\widetilde{u}\|_{H^1}^{\frac32}\nonumber\\&\le c\|u\|_{H^2}\Big(\|\widetilde{u_2}\|_{L^2}^{2}+\|\partial_2\widetilde{u}\|_{H^1}^{2}\Big).\label{B_{112}}
\end{align}
Inserting the upper bound for $B_{111}$ and $B_{112}$ in (\ref{B_{11=}}) we get
\begin{align}
B_{11}\le c\|u\|_{H^2}\Big(\|\widetilde{u_2}\|_{L^2}^{2}+\|\partial_2\widetilde{u}\|_{H^1}^{2}\Big).\label{B_{11}}
\end{align}
To deal with $B_{12},$ we write it first as
\begin{align}
B_{12}&:=-\int \partial_1u_2\partial_2\widetilde{u}\cdot\partial_1\widetilde{u}dx\nonumber\\&=-\int \partial_1\widetilde{u_2}\partial_2\widetilde{u_1}\partial_1\widetilde{u_1}dx-\int \partial_1\widetilde{u_2}\partial_2\widetilde{u_2}\partial_1\widetilde{u_2}dx\nonumber\\&:=B_{121}+B_{122}.\label{B_{12=}} 
\end{align}
For $B_{121},$ we use the divergence-free condition of $u$ and Lemma \ref{l3}
\begin{align}
B_{121}&:=-\int \partial_1\widetilde{u_2}\partial_2\widetilde{u_1}\partial_1\widetilde{u_1}dx=\int \partial_1\widetilde{u_2}\partial_2\widetilde{u_1}\partial_2\widetilde{u_2}dx\nonumber\\&\le c\|\partial_1\widetilde{u_2}\|_{L^2}\|\partial_2\widetilde{u_1}\|_{L^2}^{\frac12}\|\partial_1\partial_2\widetilde{u_1}\|_{L^2}^{\frac12}\|\partial_2\widetilde{u_2}\|_{L^2}^{\frac12}\|\partial_2\partial_2\widetilde{u_2}\|_{L^2}^{\frac12}\nonumber\\&\le c\|u\|_{H^2}\|\partial_2\widetilde{u}\|_{H^1}^{2}.\label{B_{121}}
\end{align}
The second piece $B_{122}$ can be bounded using integrating by parts, Lemma \ref{l3} and then Young's inequality
\begin{align}
B_{122}&:=-\int \partial_1\widetilde{u_2}\partial_2\widetilde{u_2}\partial_1\widetilde{u_2}dx\nonumber\\&\nonumber=2\int \partial_2\partial_1\widetilde{u_2}\widetilde{u_2}\partial_1\widetilde{u_2}dx\nonumber\\&\le  c\|\partial_2\partial_1\widetilde{u_2}\|_{L^2}\|\widetilde{u_2}\|_{L^2}^{\frac12}\|\partial_2\widetilde{u_2}\|_{L^2}^{\frac12}\|\partial_1\widetilde{u_2}\|_{L^2}^{\frac12}\|\partial_1\partial_1\widetilde{u_2}\|_{L^2}^{\frac12}
\nonumber\\&\le c\|u\|_{H^2}\|\widetilde{u_2}\|_{L^2}^{\frac12}\|\partial_2\widetilde{u}\|_{H^1}^{\frac32}\nonumber\\&\le c\|u\|_{H^2}\Big(\|\widetilde{u_2}\|_{L^2}^{2}+\|\partial_2\widetilde{u}\|_{H^1}^{2}\Big)\label{B_{122}}
\end{align}
Combining (\ref{B_{121}}) and (\ref{B_{122}}) and inserting them in (\ref{B_{12=}}) we obtain
\begin{align}
B_{12}\le c\|u\|_{H^2}\Big(\|\widetilde{u_2}\|_{L^2}^{2}+\|\partial_2\widetilde{u}\|_{H^1}^{2}\Big).\label{B_{12}}
\end{align}
The term $B_{13}$ is naturally divided into two integrals,
\begin{align}
B_{13}&:=-\int \partial_2u_1\partial_1\widetilde{u}\cdot\partial_2\widetilde{u}dx\nonumber\\&=-\int \partial_2u_1\partial_1\widetilde{u_1}\partial_2\widetilde{u_1}dx-\int \partial_2u_1\partial_1\widetilde{u_2}\partial_2\widetilde{u_2}dx\nonumber\\&:=B_{131}+B_{132}.\label{B_{13=}} 
\end{align}
Due to $\nabla\cdot u=0$ and Lemma \ref{l3},
\begin{align}
B_{131}&:=-\int \partial_2u_1\partial_1\widetilde{u_1}\partial_2\widetilde{u_1}dx\nonumber\\&=\int \partial_2u_1\partial_2\widetilde{u_2}\partial_2\widetilde{u_1}dx\nonumber\\&\le c\|\partial_2u_1\|_{L^2}\|\partial_2\widetilde{u_2}\|_{L^2}^{\frac12}\|\partial_1\partial_2\widetilde{u_2}\|_{L^2}^{\frac12}\|\partial_2\widetilde{u_1}\|_{L^2}^{\frac12}\|\partial_2\partial_2\widetilde{u_1}\|_{L^2}^{\frac12}\nonumber\\&\le c\|u\|_{H^2}\|\partial_2\widetilde{u}\|_{H^1}^2.\label{B_{131}}
\end{align}
Integrating by parts, making use of Lemma \ref{l3} and then Young's inequality
\begin{align}
B_{132}&:=-\int \partial_2u_1\partial_1\widetilde{u_2}\partial_2\widetilde{u_2}dx\nonumber\\&=\int \partial_1\partial_2\widetilde{u_1}\widetilde{u_2}\partial_2\widetilde{u_2}dx+\int \partial_2u_1\widetilde{u_2}\partial_1\partial_2\widetilde{u_2}dx\nonumber\\&\le c\|\partial_1\partial_2\widetilde{u_1}\|_{L^2}\|\widetilde{u_2}\|_{L^2}^{\frac12}\|\partial_2\widetilde{u_2}\|_{L^2}^{\frac12}\|\partial_2\widetilde{u_2}\|_{L^2}^{\frac12}\|\partial_1\partial_2\widetilde{u_2}\|_{L^2}^{\frac12}\nonumber\\&\quad+c\|\partial_2u_1\|_{L^2}\|\widetilde{u_2}\|_{L^2}^{\frac12}\|\partial_2\widetilde{u_2}\|_{L^2}^{\frac12}\|\partial_1\partial_2\widetilde{u_2}\|_{L^2}^{\frac12}\|\partial_1\partial_1\partial_2\widetilde{u_2}\|_{L^2}^{\frac12}\nonumber\\&\le c\|u\|_{H^2}\|\widetilde{u_2}\|_{L^2}^{\frac12}\|\partial_2\widetilde{u}\|_{H^1}^{\frac32}\nonumber\\&\le c\|u\|_{H^2}\Big(\|\widetilde{u_2}\|_{L^2}^2+\|\partial_2\widetilde{u}\|_{H^1}^2\Big).\label{B_{132}}
\end{align}
Inserting the estimates (\ref{B_{131}}) and (\ref{B_{132}}) in (\ref{B_{13=}}) we get 
\begin{align}
B_{13}\le c\|u\|_{H^2}\Big(\|\widetilde{u_2}\|_{L^2}^2+\|\partial_2\widetilde{u}\|_{H^1}^2\Big).\label{B_{13}}
\end{align}
The last term $B_{14}$ can be bounded directly via Lemma \ref{l3},
\begin{align}
B_{14}&:=-\int \partial_2u_2\partial_2\widetilde{u}\cdot\partial_2\widetilde{u}dx \nonumber\\&\le c\|\partial_2u_2\|_{L^2}\|\partial_2\widetilde{u}\|_{L^2}^{\frac12}\|\partial_1\partial_2\widetilde{u}\|_{L^2}^{\frac12}\|\partial_2\widetilde{u}\|_{L^2}^{\frac12}\|\partial_2\partial_2\widetilde{u}\|_{L^2}^{\frac12}\nonumber\\&\le c\|u\|_{H^2}\|\partial_2\widetilde{u}\|_{H^1}^2. \label{B_{14}} 
\end{align}
Collecting the upper bounds obtained in (\ref{B_{11}}), (\ref{B_{12}}), (\ref{B_{13}}) and (\ref{B_{14}}) and inserting them in (\ref{B_1}), yields
\begin{align}
B_1 \le c\|u\|_{H^2}\Big(\|\widetilde{u_2}\|_{L^2}^2+\|\partial_2\widetilde{u}\|_{H^1}^2\Big).\label{B_11}
\end{align}
The next term $B_2$ is naturally split into four parts, 
\begin{align}
B_2&:=-\int \nabla(\widetilde{u_2}\partial_2\overline{u})\cdot\nabla\widetilde{u}dx\nonumber\\&=-\int\partial_1\widetilde{u_2}\partial_2\overline{u}\cdot\partial_1\widetilde{u}dx -\int\partial_2\widetilde{u_2}\partial_2\overline{u}\cdot\partial_2\widetilde{u}dx\nonumber\\&\quad -\int \widetilde{u_2}\partial_1\partial_2\overline{u}\cdot\partial_1\widetilde{u}dx-\int \widetilde{u_2}\partial_2\partial_2\overline{u}\cdot\partial_2\widetilde{u}dx\nonumber\\&:=B_{21}+B_{22}+B_{23}+B_{24}.\label{B_2}
\end{align}
We rewrite $B_{21}$ as,
\begin{align}
B_{21}&:=-\int\partial_1\widetilde{u_2}\partial_2\overline{u}\cdot \partial_1\widetilde{u}dx\nonumber\\&=-\int\partial_1\widetilde{u_2}\partial_2\overline{u_1} \partial_1\widetilde{u_1}dx-\int\partial_1\widetilde{u_2}\partial_2\overline{u_2} \partial_1\widetilde{u_2}dx\nonumber\\&:=B_{211}+B_{212}.\label{B_{21=}}
\end{align}
Clearly, due to $\overline{u_2}=0$, 
\begin{align}
B_{212}:=-\int\partial_1\widetilde{u_2}\partial_2\overline{u_2} \partial_1\widetilde{u_2}dx=0.\label{B_{211}}
\end{align}
By the divergence-free condition of $u$, integration by parts, Lemma \ref{l1}, Hölder's inequality and then Lemma \ref{delta216} 
\begin{align}
B_{211}&:=-\int\partial_1\widetilde{u_2}\partial_2\overline{u_1} \partial_1\widetilde{u_1}dx\nonumber\\&=\int\partial_1\widetilde{u_2}\partial_2\overline{u_1} \partial_2\widetilde{u_2}dx\nonumber\\&=\int\widetilde{u_2}\partial_2\overline{u_1} \partial_1\partial_2\widetilde{u_2}dx\nonumber\\&=\int_{\mathbb{R}} \partial_2\overline{u_1}\Big(\int_{\mathbb{T}}  \widetilde{u_2} \partial_1\partial_2\widetilde{u_2}dx_1\Big)dx_2\nonumber\\&\le c\|\partial_2\overline{u_1}\|_{L^{\infty}_{x_2}}\|\widetilde{u_2} \|_{L^2}\|\partial_1\partial_2\widetilde{u_2}\|_{L^2}\nonumber\\&\le  c\|\partial_2\overline{u_1}\|_{H^1}\|\widetilde{u_2} \|_{L^2}\|\partial_1\partial_2\widetilde{u_2}\|_{L^2}\nonumber \\&\le c \| u\|_{H^{2}}\|\widetilde{u_2}\|_{L^2}\|\partial_2\widetilde{u}\|_{H^1}\nonumber\\&\le c\|\theta\|_{H^2}\Big(\|\widetilde{u_2}\|_{L^2}^2+\|\partial_2\widetilde{u}\|_{H^1}^{2}\Big).
\label{B_{212}}
\end{align}
It then follows from (\ref{B_{211}}), (\ref{B_{212}}) and (\ref{B_{21=}}) that
\begin{align}
B_{21}\le  c\|u\|_{H^2}\Big(\|\partial_2\widetilde{u}\|_{H^1}^{2}+\|\widetilde{u_2}\|_{L^2}^{2}\Big).\label{B_{21}}
\end{align}
According to Lemma \ref{l3},
\begin{align}
B_{22}&:=-\int\partial_2\widetilde{u_2}\partial_2\overline{u}\cdot\partial_2\widetilde{u}dx\nonumber\\&\le c\|\partial_2\overline{u}\|_{L^2}\|\partial_2\widetilde{u_2}\|_{L^2}^{\frac12}\|\partial_1\partial_2\widetilde{u_2}\|_{L^2}^{\frac12}\|\partial_2\widetilde{u}\|_{L^2}^{\frac12}\|\partial_2\partial_2\widetilde{u}\|_{L^2}^{\frac12}\nonumber\\&\le c\|u\|_{H^2}\|\partial_2\widetilde{u}\|_{H^1}^2.\label{B_{22}}
\end{align}
By definition of $\overline{u}$,
\begin{align}
B_{23}&:=-\int \widetilde{u_2}\partial_1\partial_2\overline{u}\cdot\partial_1\widetilde{u}dx = 0.\label{B_{24}}
\end{align}
Due Lemma \ref{l3} and Young's inequality, 
\begin{align}
B_{24}&:=-\int \widetilde{u_2}\partial_2\partial_2\overline{u}\cdot\partial_2\widetilde{u}dx\nonumber\\&\le c\|\partial_2 \partial_2 \overline{u}\|_{L^2}\|\widetilde{u_2}\|_{L^2}^{\frac12}\|\partial_2\widetilde{u_2}\|_{L^2}^{\frac12}\|\partial_2\widetilde{u}\|_{L^2}^{\frac12}\|\partial_1\partial_2\widetilde{u}\|_{L^2}^{\frac12}\nonumber\\&\le c\|u\|_{H^2}\|\widetilde{u_2}\|_{L^2}^{\frac12}\|\partial_2\widetilde{u}\|_{H^1}^{\frac32}\nonumber\\&\le c\|u\|_{H^2}\Big(\|\widetilde{u_2}\|_{L^2}^2+\|\partial_2\widetilde{u}\|_{H^1}^2\Big). \label{B_{23}}
\end{align}
Collecting the estimates (\ref{B_{21}}), (\ref{B_{22}}), (\ref{B_{24}}), (\ref{B_{23}}) and (\ref{B_2}),  we get
\begin{align}
B_2\le c\|u\|_{H^2}\Big(\|\widetilde{u_2}\|_{L^2}^2+\|\partial_2\widetilde{u}\|_{H^1}^2\Big).\label{B_22}
\end{align}
To bound $B_3$, we first write $u =\overline{u}+ \widetilde{u}$ and use Lemma \ref{l1}
\begin{align}
B_3&:=-\int\nabla(\widetilde{u\cdot\nabla\widetilde{\theta}})\cdot\nabla\widetilde{\theta}dx\nonumber\\&=-\int\nabla(u\cdot\nabla\widetilde{\theta})\cdot\nabla\widetilde{\theta}dx+\underbrace{\int\nabla(\overline{u\cdot\nabla\widetilde{\theta}})\cdot\nabla\widetilde{\theta}dx}_{=0}\nonumber\\&=-\int \partial_1\widetilde{\theta}\partial_1\widetilde{u_1}\partial_1\widetilde{\theta} dx-\int \partial_2\widetilde{\theta}\partial_1\widetilde{u_2}\partial_1\widetilde{\theta} dx\nonumber\\&\quad-\int \partial_1\widetilde{\theta}\partial_2u_1\partial_2\widetilde{\theta} dx-\int \partial_2\widetilde{\theta}\partial_2\widetilde{u_2}\partial_2\widetilde{\theta} dx\nonumber\\&:=B_{31}+
B_{32}+B_{33}+B_{34}.\label{B_3}\end{align}
All terms in (\ref{B_3}) can be bounded suitably. In fact, by Lemma \ref{l3},
\begin{align}
B_{31}&:=-\int \partial_1\widetilde{\theta}\partial_1\widetilde{u_1}\partial_1\widetilde{\theta} dx\nonumber\\&\le c\|\partial_1\widetilde{u_1}\|_{L^2}\|\partial_1\widetilde{\theta}\|_{L^2}^{\frac12}\|\partial_1\partial_1\widetilde{\theta}\|_{L^2}^{\frac12}\|\partial_1\widetilde{\theta}\|_{L^2}^{\frac12}\|\partial_2\partial_1\widetilde{\theta}\|_{L^2}^{\frac12}
\nonumber\\&\le c\|u\|_{H^2}\|\partial_1\widetilde{\theta}\|_{H^1}^{2}.\label{B_{31}}
\end{align}
For $B_{32}$, $B_{33}$ and $B_{34}$ we use Lemmas \ref{l3} and \ref{l4}, 
\begin{align}
B_{32}&:=-\int \partial_2\widetilde{\theta}\partial_1\widetilde{u_2}\partial_1\widetilde{\theta} dx\nonumber\\&\le c\|\partial_1\widetilde{u_2}\|_{L^2}\underbrace{\|\partial_2\widetilde{\theta}\|_{L^2}^{\frac12}}_{\le \|\partial_1\partial_2\widetilde{\theta}\|_{L^2}^{\frac12}} \|\partial_1\partial_2\widetilde{\theta}\|_{L^2}^{\frac12}\|\partial_1\widetilde{\theta}\|_{L^2}^{\frac12}\|\partial_2\partial_1\widetilde{\theta}\|_{L^2}^{\frac12}
\nonumber\\&\le c\|u\|_{H^2}\|\partial_1\widetilde{\theta}\|_{H^1}^{2},\label{B_{32}}
\end{align}
\begin{align}
B_{33}&:=-\int \partial_1\widetilde{\theta}\partial_2u_1\partial_2\widetilde{\theta} dx\nonumber\\&\le c\|\partial_2u_1\|_{L^2}\underbrace{\|\partial_2\widetilde{\theta}\|_{L^2}^{\frac12}}_{\le \|\partial_1\partial_2\widetilde{\theta}\|_{L^2}^{\frac12}} \|\partial_1\partial_2\widetilde{\theta}\|_{L^2}^{\frac12}\|\partial_1\widetilde{\theta}\|_{L^2}^{\frac12}\|\partial_2\partial_1\widetilde{\theta}\|_{L^2}^{\frac12}
\nonumber\\&\le c\|u\|_{H^2}\|\partial_1\widetilde{\theta}\|_{H^1}^{2}, \label{B33}
\end{align}
and
\begin{align}
B_{34}&:=-\int \partial_2\widetilde{\theta}\partial_2\widetilde{u_2}\partial_2\widetilde{\theta} dx\nonumber\\&\le c\|\partial_2\widetilde{\theta}\|_{L^2}\underbrace{\|\partial_2\widetilde{\theta}\|_{L^2}^{\frac12}}_{\le \|\partial_1\partial_2\widetilde{\theta}\|_{L^2}^{\frac12}}\|\partial_1\partial_2\widetilde{\theta}\|_{L^2}^{\frac12}\|\partial_2\widetilde{u_2}\|_{L^2}^{\frac{1}{2}}\|\partial_2\partial_2\widetilde{u_2}\|_{L^2}^{\frac12}
\nonumber\\&\le c\|\theta\|_{H^2}\|\partial_2\widetilde{u}\|_{H^1}\|\partial_1\widetilde{\theta}\|_{H^1}\nonumber\\&\le c\|\theta\|_{H^2}\Big(\|\partial_2\widetilde{u}\|_{H^1}^{2}+\|\partial_1\widetilde{\theta}\|_{H^1}^{2}\Big).\label{B_{34}}
\end{align}
Combining the estimates (\ref{B_{31}}), (\ref{B_{32}}), (\ref{B33}), (\ref{B_{34}}) and (\ref{B_3}), we obtain
\begin{align}
B_3 \le c\|(u,\theta)\|_{H^2}\Big(\|\partial_2\widetilde{u}\|_{H^1}^{2}+\|\partial_1\widetilde{\theta}\|_{H^1}^{2}\Big).\label{B_33}
\end{align}
To deal with $B_4$, we split it into four pieces,
\begin{align}
B_4&:=-\int \nabla(\widetilde{u_2}\partial_2\overline{\theta})\cdot\nabla\widetilde{\theta}dx\nonumber\\&=-\int \partial_1(\widetilde{u_2}\partial_2\overline{\theta})\cdot\partial_1\widetilde{\theta}dx-\int \partial_2(\widetilde{u_2}\partial_2\overline{\theta})\cdot\partial_2\widetilde{\theta}dx\nonumber\\&=-\int \partial_1\widetilde{u_2}\partial_2\overline{\theta}\partial_1\widetilde{\theta}dx-\int \widetilde{u_2}\partial_1\partial_2\overline{\theta}\partial_1\widetilde{\theta}dx\nonumber\\&\quad-\int \partial_2\widetilde{u_2}\partial_2\overline{\theta}\partial_2\widetilde{\theta}dx-\int \widetilde{u_2}\partial_2\partial_2\overline{\theta}\partial_2\widetilde{\theta}dx\nonumber\\&:=B_{41}+B_{42}+B_{43} + B_{44}.\label{B_4}
\end{align}
The terms above can be bounded as follows. Due to the definition of the horizontal average $\overline{\theta}$,
\begin{align} \label{faracha}
B_{42}:= -\int \widetilde{u_2}\partial_1\partial_2\overline{\theta}\partial_1\widetilde{\theta}dx = 0.
\end{align}
For $B_{41}$, we use integration by parts, Lemma \ref{l1}, Hölder's inequality and then Lemma \ref{delta216}  
\begin{align}
B_{41}&:=-\int \partial_1\widetilde{u_2}\partial_2\overline{\theta}\partial_1\widetilde{\theta}dx\nonumber\\&\nonumber=-\int \widetilde{u_2}\partial_2\overline{\theta}\partial_1\partial_1\widetilde{\theta}dx\nonumber\\&=\int_{\mathbb{R}} \partial_2\overline{\theta}\Big(\int_{\mathbb{T}}  \widetilde{u_2} \partial_1\partial_1\widetilde{\theta}dx_1\Big)dx_2\nonumber\\&\le c\|\partial_2\overline{\theta}\|_{L^{\infty}_{x_2}}\|\widetilde{u_2} \|_{L^2}\|\partial_1\partial_1\widetilde{\theta}\|_{L^2}\nonumber\\&\le c \|\partial_2\overline{\theta}\|_{H^1} \| \widetilde{u_2}\|_{L^2} \| \partial_1\partial_1 \widetilde{\theta}\|_{L^2}\nonumber\\&\le c \|\theta\|_{H^2} \| \widetilde{u_2}\|_{L^2} \| \partial_1 \widetilde{\theta}\|_{H^1}\nonumber \\&\le c\|\theta\|_{H^2}\Big(\|\widetilde{u_2}\|_{L^2}^2+\|\partial_1 \widetilde{\theta}\|_{H^1}^{2}\Big).\label{B_{41}}
\end{align}
The other two terms $B_{43}$, $B_{44}$ can be bounded via Lemmas \ref{l3} and \ref{l4},
\begin{align}
B_{43}&:=-\int \partial_2\widetilde{u_2}\partial_2\overline{\theta}\partial_2\widetilde{\theta}dx\nonumber\\&\le c\|\partial_2\overline{\theta}\|_{L^2}\underbrace{\|\partial_2\widetilde{\theta}\|_{L^2}^{\frac12}}_{\le \|\partial_1\partial_2\widetilde{\theta}\|_{L^2}^{\frac12}}\|\partial_1\partial_2\widetilde{\theta}\|_{L^2}^{\frac12}\|\partial_2\widetilde{u_2}\|_{L^2}\|\partial_2\partial_2\widetilde{u_2}\|_{L^2}^{\frac12}
\nonumber\\&\le c\|\theta\|_{H^2}\|\partial_1\widetilde{\theta}\|_{H^1}\|\partial_2\widetilde{u}\|_{H^1}\nonumber\\&\le c\|\theta\|_{H^2}\Big(\|\partial_1\widetilde{\theta}\|_{H^1}^{2}+\|\partial_2\widetilde{u}\|_{H^1}^{2}\Big),\label{B_{42}}
\end{align}
\begin{align}
B_{44}&:=-\int \widetilde{u_2}\partial_2\partial_2\overline{\theta}\partial_2\widetilde{\theta}dx\nonumber\\&\le c\|\partial_2\partial_2\overline{\theta}\|_{L^2}
\underbrace{\|\partial_2\widetilde{\theta}\|_{L^2}^{\frac12}}_{\le \|\partial_1\partial_2\widetilde{\theta}\|_{L^2}^{\frac12}}\|\partial_1\partial_2\widetilde{\theta}\|_{L^2}^{\frac12}\|\widetilde{u_2}\|_{L^2}^{\frac12}\|\partial_2\widetilde{u_2}\|_{L^2}^{\frac12}\nonumber\\&\le c\|\theta\|_{H^2}\|\widetilde{u_2}\|_{L^2}^{\frac12}\|\partial_2\widetilde{u}\|_{H^1}^{\frac12}\|\partial_1\widetilde{\theta}\|_{H^1}\nonumber\\&\le c\|\theta\|_{H^2}\Big(\|\partial_2\widetilde{u}\|_{H^1}^{2}+\|\widetilde{u_2}\|_{L^2}^2+\|\partial_1\widetilde{\theta}\|_{H^1}^{2}\Big).\label{B_{43}}
\end{align}
Inserting all the bounds obtained above for $B_{41}$ through  $B_{44}$ in (\ref{B_4}) leads to
\begin{align}
B_4 \le c\|(u,\theta)\|_{H^2}\Big(\|\partial_2\widetilde{u}\|_{H^1}^{2}+\|\partial_1\widetilde{\theta}\|_{H^1}^{2}+\|\widetilde{u_2}\|_{L^2}^2\Big).\label{B_44}
\end{align}
Collecting (\ref{B_11}), (\ref{B_22}), (\ref{B_33}) and (\ref{B_44}) gives 
\begin{align}
\frac12\frac{d}{dt}\Big(\|\nabla\widetilde{u}(t)\|_{L^2}^2&+\|\nabla\widetilde{\theta}(t)\|_{L^2}^2\Big)+\nu\|\partial_2\nabla\widetilde{u}\|_{L^2}^2+\eta\|\partial_1\nabla\widetilde{\theta}\|_{L^2}^2\nonumber\\&\le c\|(u,\theta)\|_{H^2}\Big(\|\partial_2\widetilde{u}\|_{H^1}^{2}+\|\partial_1\widetilde{\theta}\|_{H^1}^{2}+\|\widetilde{u_2}\|_{L^2}^2\Big).\label{M1}
\end{align}
Now, to control the norm $\|\widetilde{u_2}\|_{L^2}$ present in (\ref{M2}) and (\ref{M1}), we need to add the following term, 
\begin{align*}
-\frac{d}{dt}\Big(\delta(\widetilde{u_2},\widetilde{\theta})\Big)=-\delta(\partial_t\widetilde{u_2},\widetilde{\theta})-\delta(\widetilde{u_2},\partial_t\widetilde{\theta}), 
\end{align*}
with $\delta>0$ is a small constant to be fixed at the end of the proof. Doing so, we generate an extra regularization term that helps bound $\|\widetilde{u_2}\|_{L^2}$.
Note that, this stabilizing term comes from the interaction between $\widetilde u$ 
and $\widetilde\theta$. Due to Hölder's inequality, we have, for sufficiently small $\delta>0$, $$\|(\widetilde{u},\widetilde{\theta})\|_{H^1}^2-\delta(\widetilde{u_2},\widetilde{\theta})\ge 0.$$
Using the first equation of (\ref{*}) and $\overline{u_2} = 0$, we write
\begin{align}
\partial_t\widetilde{u_2}+\widetilde{u\cdot\nabla\widetilde{u_2}}+\underbrace{\widetilde{u_2}\partial_2\overline{u_2}}_{=0}-\nu\partial_2^2\widetilde{u_2}+\partial_2 \widetilde{p}=g_0\widetilde{\theta}\label{n_2}.
\end{align}
Applying $\nabla\cdot$ to the first equation of (\ref{*}), we obtain
\begin{align}
\nabla\cdot(\widetilde{u\cdot\nabla\widetilde{u}})+\nabla\cdot(\widetilde{u_2}\partial_2\overline{u})+\Delta \widetilde{p}=g_0\partial_2\widetilde{\theta}.\label{n_1}
\end{align}
Making use of (\ref{n_1}), we have 
\begin{align*}
\widetilde{p}=-\Delta^{-1}\nabla\cdot(\widetilde{u\cdot\nabla\widetilde{u}})-\Delta^{-1}\nabla\cdot(\widetilde{u_2}\partial_2\overline{u})+g_0\Delta^{-1}\partial_2\widetilde{\theta}.
\end{align*}
Then,
\begin{align}
\partial_2\widetilde{p}=-\partial_2\Delta^{-1}\nabla\cdot(\widetilde{u\cdot\nabla\widetilde{u}})-\partial_2\Delta^{-1}\nabla\cdot(\widetilde{u_2}\partial_2\overline{u})+g_0\partial_2\partial_2\Delta^{-1}\widetilde{\theta}.\label{p-2}
\end{align}
By (\ref{n_2}) and the second equation of (\ref{*}), we write
\begin{align}
-\delta\frac{d}{dt}(\widetilde{u_2},\widetilde{\theta})&=-\delta(\partial_t\widetilde{u_2},\widetilde{\theta})-\delta(\widetilde{u_2},\partial_t\widetilde{\theta})\nonumber\\&=-\delta(g_0\widetilde{\theta}-\partial_2 \widetilde{p}+\nu\partial_2^2\widetilde{u_2}-\widetilde{u\cdot\nabla\widetilde{u_2}},\widetilde{\theta})
\notag\\
&\quad -\delta(\widetilde{u_2},-g_0\widetilde{u_2}+\eta\partial_1^2\widetilde{\theta}-\widetilde{u_2}\partial_2\overline{\theta}-\widetilde{u\cdot\nabla\widetilde{\theta}})\nonumber\\&=-g_0\delta\|\widetilde{\theta}\|_{L^2}^2+\int\partial_2 \widetilde{p}\widetilde{\theta}dx-\delta\nu\int\partial_2^2\widetilde{u_2}\widetilde{\theta}dx+\delta\int\widetilde{u\cdot\nabla\widetilde{u_2}}\widetilde{\theta}dx\nonumber\\&\quad+g_0\delta\|\widetilde{u_2}\|_{L^2}^2-\delta\eta\int\partial_1^2\widetilde{\theta}\widetilde{u_2}dx+\delta\int\widetilde{u_2}\widetilde{u_2}\partial_2\overline{\theta}dx+\delta\int\widetilde{u\cdot\nabla\widetilde{\theta}}\widetilde{u_2}dx\nonumber\\&:=N_1+\cdots+N_8.\label{A}\end{align}
The terms $N_1$ through $N_8$ obey the following bounds. For $N_2$, we use (\ref{p-2}) to rewrite it as,
\begin{align}
N_2&:=\delta\int\partial_2 \widetilde{p}\widetilde{\theta}dx\nonumber\\&=-\delta\int\partial_2\Delta^{-1}\nabla\cdot(\widetilde{u\cdot\nabla\widetilde{u}})\cdot\widetilde{\theta}dx-\delta\int\partial_2\Delta^{-1}\nabla\cdot(\widetilde{u_2}\partial_2\overline{u})\cdot\widetilde{\theta}dx\nonumber\\&\quad+g_0\delta\int\partial_2\partial_2\Delta^{-1}\widetilde{\theta}\cdot\widetilde{\theta}dx\nonumber\\&:=N_{21}+N_{22}+N_{23}.\label{N-2}
\end{align}
By Lemma \ref{l1} and integration by parts we split $N_{21}$ into three pieces
\begin{align}
N_{21}&:=-\delta\int\partial_2\Delta^{-1}\nabla\cdot(\widetilde{u\cdot\nabla\widetilde{u}})\cdot\widetilde{\theta}dx\nonumber\\&=-\delta\int\partial_2\Delta^{-1}\nabla\cdot(u\cdot\nabla\widetilde{u})\cdot\widetilde{\theta}dx+\underbrace{\delta\int\partial_2\Delta^{-1}\nabla\cdot(\overline{u\cdot\nabla\widetilde{u}})\cdot\widetilde{\theta}dx}_{=0}\nonumber\\&=-\delta\int\partial_2\Delta^{-1}\partial_1(u_1\partial_1\widetilde{u})\cdot\widetilde{\theta}dx-\delta\int\partial_2\Delta^{-1}\partial_2(u_2\partial_2\widetilde{u})\cdot\widetilde{\theta}dx\nonumber\\&=-\delta\int(u_1\partial_1\widetilde{u})\cdot\partial_2\Delta^{-1}\partial_1\widetilde{\theta}dx-\delta\int(u_2\partial_2\widetilde{u})\cdot\partial_2\Delta^{-1}\partial_2\widetilde{\theta}dx\\&\nonumber=-\delta\int\partial_1\widetilde{u_1}\widetilde{u}\cdot\partial_2\Delta^{-1}\partial_1\widetilde{\theta}dx-\delta\int u_1\widetilde{u}\cdot\partial_1\partial_2\Delta^{-1}\partial_1\widetilde{\theta}dx\\&\nonumber\quad\quad-\delta\int(u_2\partial_2\widetilde{u})\cdot\partial_2\Delta^{-1}\partial_2\widetilde{\theta}dx\nonumber\\&=N_{211}+N_{212}+N_{213}.\label{N-21}
\end{align}
Due to $\nabla\cdot u=0$, Lemma \ref{l3} and the boundedness of the Riesz transform,
\begin{align}
N_{211}&=-\delta\int\partial_1\widetilde{u_1}\widetilde{u}\cdot\partial_2\Delta^{-1}\partial_1\widetilde{\theta}dx\nonumber\\&=\delta\int\partial_2\widetilde{u_2}\widetilde{u}\cdot\partial_2\Delta^{-1}\partial_1\widetilde{\theta}dx\nonumber\\&\le c\|\widetilde{u}\|_{L^2}\|\partial_2\widetilde{u_2}\|_{L^2}^{\frac12}\|\partial_2\partial_2\widetilde{u_2}\|_{L^2}^{\frac12}\|\partial_2\Delta^{-1}\partial_1\widetilde{\theta}\|_{L^2}^{\frac12}\|\partial_1\partial_2\Delta^{-1}\partial_1\widetilde{\theta}\|_{L^2}^{\frac12}\nonumber\\&\le c\|u\|_{H^2}\|\partial_2\widetilde{u}\|_{H^1}\|\widetilde{\theta}\|_{L^2}^{\frac12}\|\partial_1\widetilde{\theta}\|_{L^2}^{\frac12}\nonumber\\&\le c\|u\|_{H^2}\|\partial_2\widetilde{u}\|_{H^1}\|\partial_1\widetilde{\theta}\|_{H^1}\nonumber\\&\le c\|u\|_{H^2}\Big(\|\partial_2\widetilde{u}\|_{H^1}^2+\|\partial_1\widetilde{\theta}\|_{H^1}^2\Big).\label{N211}
\end{align}
According to Lemma \ref{l3}, the boundedness of the Riesz transform, Lemma \ref{l4} and $\nabla\cdot u=0$,
\begin{align}
N_{212}&=-\delta\int u_1\widetilde{u}\cdot\partial_1\partial_2\Delta^{-1}\partial_1\widetilde{\theta}dx\nonumber\\&\le c\|u_1\|_{L^2}\|\widetilde{u}\|_{L^2}^{\frac12}\|\partial_2\widetilde{u}\|_{L^2}^{\frac12}\|\partial_1\partial_2\Delta^{-1}\partial_1\widetilde{\theta}\|_{L^2}^{\frac12}\|\partial_1\partial_1\partial_2\Delta^{-1}\partial_1\widetilde{\theta}\|_{L^2}^{\frac12}\nonumber\\&\le c\|u\|_{H^2}\|\widetilde{u}\|_{L^2}^{\frac12}\|\partial_2\widetilde{u}\|_{L^2}^{\frac12}\|\partial_1\widetilde{\theta}\|_{L^2}^{\frac12}\|\partial_1\partial_1\widetilde{\theta}\|_{L^2}^{\frac12}\nonumber\\&\le c\|u\|_{H^2}(\|\widetilde{u_1}\|_{L^2}+\|\widetilde{u_2}\|_{L^2})^{\frac12}\|\partial_2\widetilde{u}\|_{H^1}^{\frac12}\|\partial_1\widetilde{\theta}\|_{H^1}\nonumber\\&\le c\|u\|_{H^2}\Big(\underbrace{\|\widetilde{u_1}\|_{L^2}^2}_{\le \|\partial_1\widetilde{u_1}\|_{L^2}^2=\|\partial_2\widetilde{u_2}\|_{L^2}^2}+\|\widetilde{u_2}\|_{L^2}^2+\|\partial_2\widetilde{u}\|_{H^1}^2+\|\partial_1\widetilde{\theta}\|_{H^1}^2\Big)\nonumber\\&\le c\|u\|_{H^2}\Big(\|\widetilde{u_2}\|_{L^2}^2+\|\partial_2\widetilde{u}\|_{H^1}^2+\|\partial_1\widetilde{\theta}\|_{H^1}^2\Big).\label{N212}
\end{align}
Applying Lemma \ref{l3}, the boundedness of the Riesz transform  and then Lemma \ref{l4},
\begin{align}
N_{213}&=-\delta\int(u_2\partial_2\widetilde{u})\cdot\partial_2\Delta^{-1}\partial_2\widetilde{\theta}dx\nonumber\\&\le c\|u_2\|_{L^2}\|\partial_2\widetilde{u}\|_{L^2}^{\frac12}\|\partial_2\partial_2\widetilde{u}\|_{L^2}^{\frac12}\|\partial_2\Delta^{-1}\partial_2\widetilde{\theta}\|_{L^2}^{\frac12}\|\partial_1\partial_2\Delta^{-1}\partial_2\widetilde{\theta}\|_{L^2}^{\frac12}\nonumber\\&\le c\|u\|_{H^2}\|\partial_2\widetilde{u}\|_{H^1}\|\widetilde{\theta}\|_{L^2}^{\frac12}\|\partial_1\widetilde{\theta}\|_{L^2}^{\frac12}\nonumber\\&\le c\|u\|_{H^2}\|\partial_2\widetilde{u}\|_{H^1}\|\partial_1\widetilde{\theta}\|_{H^1}\nonumber\\&\le c\|u\|_{H^2}\Big(\|\partial_2\widetilde{u}\|_{H^1}^2+\|\partial_1\widetilde{\theta}\|_{H^1}^2\Big).\label{N213}
\end{align}
The bounds in (\ref{N211}), (\ref{N212}) and (\ref{N213}) lead to
\begin{align}
N_{21} \le c\|u\|_{H^2}\Big(\|\widetilde{u_2}\|_{L^2}^2+\|\partial_2\widetilde{u}\|_{H^1}^2+\|\partial_1\widetilde{\theta}\|_{H^1}^2\Big).
\end{align}
Now we turn to the next term $N_{22}$. Using Hölder's inequality, the boundedness of the Riesz transform and Lemmas \ref{l1}, \ref{delta216} and \ref{l4}
\begin{align}
N_{22}&:=-\delta\int\partial_2\Delta^{-1}\nabla\cdot(\widetilde{u_2}\partial_2\overline{u})\cdot\widetilde{\theta}dx\nonumber\\&\le c\delta\|\partial_2\Delta^{-1}\nabla\cdot(\widetilde{u_2}\partial_2\overline{u})\|_{L^2}\|\widetilde{\theta}\|_{L^2}\nonumber\\&\le c\delta\|\widetilde{u_2}\partial_2\overline{u}\|_{L^2}\|\widetilde{\theta}\|_{L^2}\nonumber\\&\le c\delta\|\partial_2\overline{u}\|_{L^{\infty}_{x_2}}\|\widetilde{u_2}\|_{L^2}\|\widetilde{\theta}\|_{L^2}\nonumber\\&\le c\delta\|\partial_2\overline{u}\|_{H^1}\|\widetilde{u_2}\|_{L^2}\|\widetilde{\theta}\|_{L^2}\nonumber\\&\le c\delta\|u\|_{H^2}\|\widetilde{u_2}\|_{L^2}\|\partial_1\widetilde{\theta}\|_{L^2}\nonumber\\&\le c\delta\|u\|_{H^2}\Big(\|\widetilde{u_2}\|_{L^2}^2+\|\partial_1\widetilde{\theta}\|_{H^1}^2\Big).\label{N-22}
\end{align}
To deal with $N_{23}$, we integrate by parts, use Plancherel's theorem and then Lemma \ref{l4},
\begin{align}
N_{23}&:=g_0\delta\int\partial_2\partial_2\Delta^{-1}\widetilde{\theta}\cdot\widetilde{\theta}dx\nonumber\\&=g_0\delta\int\partial_2\Delta^{-\frac12}\widetilde{\theta}\cdot\partial_2\Delta^{-\frac12}\widetilde{\theta}dx\nonumber\\&= g_0\delta \|\p_2 \Lambda^{-1} \widetilde \theta\|^2_{L^2}\nonumber\\&= g_0\delta\sum\limits_{\underset{k\not= 0}{k\in\mathbb{Z}}} \int_{\mathbb R}  \frac{\xi_2^2}{k^2 + \xi_2^2} |\widehat{\widetilde \theta}(k, \xi_2)|^2 d\xi_2\nonumber\\&\le c\delta\sum\limits_{\underset{k\not= 0}{k\in\mathbb{Z}}} \int_{\mathbb R}  \xi_2^2|\widehat{\widetilde \theta}(k, \xi_2)|^2 d\xi_2= c\delta\|\p_2 \widetilde \theta\|_{L^2} ^2\le c\delta\|\partial_1\p_2 \widetilde \theta\|_{L^2} ^2\le c\delta\|\partial_1 \widetilde \theta\|_{H^1} ^2,\label{N-23}
\end{align}
where we denote $\Lambda=(-\Delta)^{\frac12}$ and we have used the fact that the oscillation part $\widehat{\theta}(0, \xi_2)$ has the horizontal mode equal to $0$, namely $\widehat{\widetilde{\theta}}(0, \xi_2)=0$. 
\\
Collecting (\ref{N-21}), (\ref{N-22}), (\ref{N-23}) and (\ref{N-2}), we find
\begin{align}
N_2\le c\delta\|(u,\theta)\|_{H^2}\Big(\|\partial_2\widetilde{u}\|_{H^1}^2+\|\widetilde{u_2}\|_{L^2}^2+\|\partial_1\widetilde{\theta}\|_{H^1}^2\Big)+c\delta\|\partial_1\widetilde{\theta}\|_{H^1}^2.\label{kk2}
\end{align}
To deal with $N_3$ we use $\nabla\cdot u=0$, integration by parts, H\"{o}lder's inequality and Lemma \ref{l4}, 
\begin{align}
N_3&:=-\delta\nu\int\partial_2^2\widetilde{u_2}\widetilde{\theta}dx=\delta\nu\int\partial_2\partial_1\widetilde{u_1}\widetilde{\theta}dx\nonumber\\&=-\delta\nu\int\widetilde{u_1}\partial_2\partial_1\widetilde{\theta}dx\nonumber\\&  \le \delta\nu\|\widetilde{u_1}\|_{L^2}\|\partial_2\partial_1\widetilde{\theta}\|_{L^2}\nonumber\\& \le c\delta\Big(\underbrace{\|\widetilde{u_1}\|_{L^2}^2}_{\le \|\partial_1\widetilde{u_1}\|_{L^2}^2= \|\partial_2\widetilde{u_2}\|_{L^2}^2}+\|\partial_2\partial_1\widetilde{\theta}\|_{L^2}^2\Big)\nonumber\\& \le c\delta\Big(\|\partial_2\widetilde{u}\|_{H^1}^2+\|\partial_1\widetilde{\theta}\|_{H^1}^2\Big).\label{kk3}
\end{align}
To estimate $N_4$, we make use of Lemma \ref{l1} and integration by parts, to write it as
\begin{align}
N_4&:=\delta\int\widetilde{u\cdot\nabla\widetilde{u_2}}\widetilde{\theta}dx\nonumber\\&=\delta\int u\cdot\nabla\widetilde{u_2}\widetilde{\theta}dx-\underbrace{\delta\int\overline{u\cdot\nabla\widetilde{u_2}}\widetilde{\theta}dx}_{=0}\nonumber\\&=\delta\int u\partial_1\widetilde{u_2}\widetilde{\theta}dx+\delta\int u\partial_2\widetilde{u_2}\widetilde{\theta}dx\nonumber\\&=-\delta\int \partial_1\widetilde{u}\widetilde{u_2}\widetilde{\theta}dx+\delta\int u\partial_2\widetilde{u_2}\widetilde{\theta}dx\nonumber\\&=N_{41}+N_{42}.\label{kk4}
\end{align}
By Lemmas \ref{l3} and \ref{l4}
\begin{align}
N_{41}&:=-\delta\int \partial_1\widetilde{u}\widetilde{u_2}\widetilde{\theta}dx\nonumber\\&\le c\|\partial_1\widetilde{u}\|_{L^2}\|\widetilde{u_2}\|_{L^2}^{\frac12}\|\partial_2\widetilde{u_2}\|_{L^2}^{\frac12}\|\widetilde{\theta}\|_{L^2}^{\frac12}\|\partial_1\widetilde{\theta}\|_{L^2}^{\frac12}\nonumber\\&\le c\|u\|_{H^2}\|\widetilde{u_2}\|_{L^2}^{\frac12}\|\partial_2\widetilde{u}\|_{L^2}^{\frac12}\|\partial_1\widetilde{\theta}\|_{H^1}\nonumber\\&\le c\|u\|_{H^2}\Big(\|\partial_2\widetilde{u}\|_{H^1}^2+\|\partial_1\widetilde{\theta}\|_{H^1}^2+\|\widetilde{u_2}\|_{L^2}^2\Big).\label{N41}
\end{align}
Similarly,
\begin{align}
N_{42}&=\delta\int u\partial_2\widetilde{u_2}\widetilde{\theta}dx\nonumber\\&\le c\|u\|_{L^2}\|\partial_2\widetilde{u_2}\|_{L^2}^{\frac12}\|\partial_2\partial_2\widetilde{u_2}\|_{L^2}^{\frac12}\|\widetilde{\theta}\|_{L^2}^{\frac12}\|\partial_1\widetilde{\theta}\|_{L^2}^{\frac12}\nonumber\\&\le c\|u\|_{H^2}\|\partial_2\widetilde{u}\|_{H^1}\|\partial_1\widetilde{\theta}\|_{H^1}\nonumber\\&\le c\|u\|_{H^2}\Big(\|\partial_2\widetilde{u}\|_{H^1}^2+\|\partial_1\widetilde{\theta}\|_{H^1}^2\Big).\label{N42}
\end{align}
Inserting (\ref{N41}) and (\ref{N42}) in (\ref{kk4}) we find
\begin{align}
N_4\le c\delta\|u\|_{H^2}\Big(\|\partial_2\widetilde{u}\|_{H^1}^2+\|\partial_1\widetilde{\theta}\|_{H^1}^2+\|\widetilde{u_2}\|_{L^2}^2\Big).
\end{align}
Clearly, the term $N_5$ can be bounded via Lemma \ref{l4},
\begin{align}
N_5&:=-g_0\delta\|\widetilde{\theta}\|_{L^2}^2\le c\delta\|\partial_1\widetilde{\theta}\|_{L^2}^2\le c\delta\|\partial_1\widetilde{\theta}\|_{H^1}^2.\label{kk5}
\end{align}
Applying H\"{o}lder's inequality and Young's inequality,
\begin{align}
N_6&:=-\delta\eta\int\partial_1^2\widetilde{\theta}\widetilde{u_2}dx\nonumber\\&\le c\delta\|\partial_1^2\widetilde{\theta}\|_{L^2}\|\widetilde{u_2}\|_{L^2}\nonumber\\&\le c\delta\|\partial_1\widetilde{\theta}\|_{H^1}\|\widetilde{u_2}\|_{L^2}\nonumber\\&\le c\delta\|\partial_1\widetilde{\theta}\|_{H^1}^2-g_0\frac{\delta}{4}\|\widetilde{u_2}\|_{L^2}^2.\label{kk6}
\end{align}
Using integration by parts and Lemma \ref{l3}, we obtain
\begin{align}
N_7&:=\delta\int\widetilde{u_2}\widetilde{u_2}\partial_2\overline{\theta}dx=2\delta\int\partial_2\widetilde{u_2}\widetilde{u_2}\overline{\theta}dx\nonumber\\&\le c\delta\|\partial_2\widetilde{u_2}\|_{L^2}^{\frac12}\|\partial_1\partial_2\widetilde{u_2}\|_{L^2}^{\frac12}\|\widetilde{u_2}\|_{L^2}^{\frac12}\|\partial_2\widetilde{u_2}\|_{L^2}^{\frac12}\|\overline{\theta}\|_{L^2}\nonumber\\&\le c\delta\|\partial_2\widetilde{u}\|_{L^2}^{\frac32}\|\widetilde{u_2}\|_{L^2}^{\frac12}\|\theta\|_{H^2}\nonumber\\&\le c\delta\|\theta\|_{H^2}(\|\partial_2\widetilde{u}\|_{H^1}^2+\|\widetilde{u_2}\|_{L^2}^2).\label{kk7}
\end{align}
It remain to bound the last term $N_8$. Making use of Lemma \ref{l1}, we divide it into three parts
\begin{align}
N_8&:=\delta\int\widetilde{u\cdot\nabla\widetilde{\theta}}\widetilde{u_2}dx\nonumber\\&=\delta\int u\cdot\nabla\widetilde{\theta}\widetilde{u_2}dx-\underbrace{\delta\int\overline{u\cdot\nabla\widetilde{\theta}}\widetilde{u_2}dx}_{=0}\nonumber\\&=\delta\int \widetilde{u_1}\partial_1\widetilde{\theta}\widetilde{u_2}dx+ \delta\int \overline{u_1}\partial_1\widetilde{\theta}\widetilde{u_2}dx +\delta\int u_2\partial_2\widetilde{\theta}\widetilde{u_2}dx\nonumber\\&:=N_{81}+N_{82} + N_{83}.\label{N-8}
\end{align}
Due to Lemmas \ref{l3}, \ref{l4} and divergence-free condition of $u$, we have
\begin{align}
N_{81}&:=\delta\int \widetilde{u_1}\partial_1\widetilde{\theta}\widetilde{u_2}dx\nonumber\\& \le c\delta\|\widetilde{u_1}\|_{L^2}^{\frac12}\|\partial_1\widetilde{u_1}\|_{L^2}^{\frac12}\|\widetilde{u_2}\|_{L^2}^{\frac12}\|\partial_2\widetilde{u_2}\|_{L^2}^{\frac12}\|\partial_1\widetilde{\theta}\|_{L^2}\nonumber\\& \le c\delta\|\partial_2\widetilde{u}\|_{L^2}^{\frac32}\|\widetilde{u_2}\|_{L^2}^{\frac12}\|\partial_1\widetilde{\theta}\|_{L^2}\nonumber\\&\le c\delta\|\theta\|_{H^2}(\|\partial_2\widetilde{u}\|_{H^1}^2+\|\widetilde{u_2}\|_{L^2}^2).\label{N-811}
\end{align}
By Lemma \ref{l1}, Hölder's inequality and Lemma \ref{delta216}, 
\begin{align}
N_{82}&:=\delta\int \overline{u_1}\partial_1\widetilde{\theta}\widetilde{u_2}dx \nonumber\\&\le \delta \|\overline{u_1}\|_{L^{\infty}_{x_2}}\|\partial_1\widetilde{\theta}\widetilde{u_2}\|_{L^1}\nonumber\\&\le c\delta \|\overline{u_1}\|_{L^{\infty}_{x_2}}\|\partial_1\widetilde{\theta}\|_{L^2}\|\widetilde{u_2}\|_{L^2}\nonumber\\&\le c\delta \|u\|_{H^1}\|\partial_1\widetilde{\theta}\|_{L^2}\|\widetilde{u_2}\|_{L^2}\nonumber\\&\le c\delta \|u\|_{H^2}\Big(\|\partial_1\widetilde{\theta}\|_{H^1}^2+\|\widetilde{u_2}\|_{L^2}^2\Big).\label{N-812}
\end{align}
Due to $\overline{u_2}=0$, integration by parts, Lemma \ref{l3} and Young's inequality
\begin{align}
N_{83}&:=\delta\int u_2\partial_2\widetilde{\theta}\widetilde{u_2}dx=\delta\int \widetilde{u_2}\partial_2\widetilde{\theta}\widetilde{u_2}dx\nonumber\\&=2\delta\int \partial_2\widetilde{u_2}\widetilde{\theta}\widetilde{u_2}dx\nonumber\\&\le c\delta\|\partial_2\widetilde{u_2}\|_{L^2}^{\frac12}\|\partial_1\partial_2\widetilde{u_2}\|_{L^2}^{\frac12}\|\widetilde{u_2}\|^{\frac12}_{L^2}\|\partial_2\widetilde{u_2}\|^{\frac12}_{L^2}\|\partial_2\widetilde{\theta}\|_{L^2}\nonumber\\&\le c\delta\|\partial_2\widetilde{u}\|^{\frac32}_{H^1}\|\widetilde{u_2}\|^{\frac12}_{L^2}\|\theta\|_{H^2}\nonumber\\&\le c\delta\|\theta\|_{H^2}\Big(\|\partial_2\widetilde{u}\|^{2}_{H^1}+\|\widetilde{u_2}\|^{2}_{L^2}\Big).\label{N-82}
\end{align}
Inserting (\ref{N-811}), (\ref{N-812}) and (\ref{N-82}) in (\ref{N-8}) leads to
\begin{align}
N_8\le c\delta \|(u,\theta)\|_{H^2}\Big(\|\partial_2\widetilde{u}\|_{H^1}^2+\|\widetilde{u_2}\|_{L^2}^2+\|\partial_1\widetilde{\theta}\|_{H^1}^2\Big).\label{kk8}
\end{align}
Considering (\ref{A}) and collecting (\ref{kk2}), (\ref{kk3}), (\ref{kk4}), (\ref{kk5}), (\ref{kk6}), (\ref{kk7}) and (\ref{kk8}), we obtain
\begin{align}
-\delta\frac{d}{dt}(\widetilde{u_2},\widetilde{\theta})&\le g_0\delta\|\widetilde{u_2}\|_{L^2}^2+c\delta \|(u,\theta)\|_{H^2}\Big(\|\partial_2\widetilde{u}\|_{H^1}^2+\|\widetilde{u_2}\|_{L^2}^2\Big)\nonumber\\&\quad-g_0\frac{\delta}{4}\|\widetilde{u_2}\|_{L^2}^2+c\delta\Big(\|\partial_2\widetilde{u}\|_{H^1}^2+\|\partial_1\widetilde{\theta}\|_{H^1}^2\Big).\label{r}
\end{align}
It then follows from (\ref{M2}), (\ref{M1}) and (\ref{r}) that
\begin{align*}
\frac{d}{dt}\Big(\|\widetilde{u}\|_{H^1}^2&+\|\widetilde{\theta}\|_{H^1}^2-\delta(\widetilde{u_2},\widetilde{\theta})\Big)+2\nu\|\partial_2\widetilde{u}\|_{H^1}^2+2\eta\|\partial_1\widetilde{\theta}\|_{H^1}^2\nonumber\\&\le c\|(u,\theta)\|_{H^2}\Big(\|\partial_2\widetilde{u}\|_{H^1}^2+\|\partial_1\widetilde{\theta}\|_{H^1}^2+\|\widetilde{u_2}\|_{L^2}^2\Big)\nonumber\\&\quad+g_0\frac{3\delta}{4}\|\widetilde{u_2}\|_{L^2}^2+c\delta \|(u,\theta)\|_{H^2}\Big(\|\partial_2\widetilde{u}\|_{H^1}^2+\|\widetilde{u_2}\|_{L^2}^2\Big)\nonumber\\&\quad\quad\quad\quad\quad\;+c\delta\Big(\|\partial_2\widetilde{u}\|_{H^1}^2+\|\partial_1\widetilde{\theta}\|_{H^1}^2\Big).
\end{align*}
Using Theorem \ref{TH}, if $\varepsilon>0$ is sufficiently small and $\|u_0\|_{L^2}+\|\theta_0\|_{L^2}\le \varepsilon$, then $\|(u(t),\theta(t))\|_{H^2}\le c\varepsilon$ and so,
\begin{align*}
\frac{d}{dt}\Big(\|\widetilde{u}\|_{H^1}^2&+\|\widetilde{\theta}\|_{H^1}^2-\delta(\widetilde{u_2},\widetilde{\theta})\Big)+2\nu\|\partial_2\widetilde{u}\|_{H^1}^2+2\eta\|\partial_1\widetilde{\theta}\|_{H^1}^2\nonumber\\&\le c\epsilon\Big(\|\partial_2\widetilde{u}\|_{H^1}^2+\|\partial_1\widetilde{\theta}\|_{H^1}^2+\|\widetilde{u_2}\|_{L^2}^2\Big)\nonumber\\&\quad+g_0\frac{3\delta}{4}\|\widetilde{u_2}\|_{L^2}^2+c\delta \epsilon\Big(\|\partial_2\widetilde{u}\|_{H^1}^2+\|\widetilde{u_2}\|_{L^2}^2\Big)\nonumber\\
&\quad+c\delta\Big(\|\partial_2\widetilde{u}\|_{H^1}^2+\|\partial_1\widetilde{\theta}\|_{H^1}^2\Big).
\end{align*}
Choosing $\epsilon>0$ such that $c\epsilon\le -g_0\min(\frac{1}{4},\frac{\delta}{4})$, we obtain 
\begin{align*}
\frac{d}{dt}\Big(\|\widetilde{u}\|_{H^1}^2&+\|\widetilde{\theta}\|_{H^1}^2-\delta(\widetilde{u_2},\widetilde{\theta})\Big)+2\nu\|\partial_2\widetilde{u}\|_{H^1}^2+2\eta\|\partial_1\widetilde{\theta}\|_{H^1}^2\nonumber\\&\le \frac{\delta}{4}\Big(\|\partial_2\widetilde{u}\|_{H^1}^2+\|\partial_1\widetilde{\theta}\|_{H^1}^2\Big)-g_0\frac{\delta}{4}\|\widetilde{u_2}\|_{L^2}^2\nonumber\\&\quad+g_0\frac{3\delta}{4}\|\widetilde{u_2}\|_{L^2}^2-g_0\frac{\delta}{4}\Big(\|\partial_2\widetilde{u}\|_{H^1}^2+\|\widetilde{u_2}\|_{L^2}^2\Big)\nonumber\\&\quad\quad\quad\quad\quad\quad+c\delta\Big(\|\partial_2\widetilde{u}\|_{H^1}^2+\|\partial_1\widetilde{\theta}\|_{H^1}^2\Big)
\nonumber\\&\le g_0\frac{\delta}{4}\|\widetilde{u_2}\|_{L^2}^2+c\delta\Big(\|\partial_2\widetilde{u}\|_{H^1}^2+\|\partial_1\widetilde{\theta}\|_{H^1}^2\Big).
\end{align*}
Choosing $\delta>0$ such that $c\delta\le\min(\nu,\eta, \frac{c}{2})$, we get
\begin{align}
\frac{d}{dt}\Big(\|\widetilde{u}\|_{H^1}^2&+\|\widetilde{\theta}\|_{H^1}^2-\delta(\widetilde{u_2},\widetilde{\theta})\Big)+\nu\|\partial_2\widetilde{u}\|_{H^1}^2+\eta\|\partial_1\widetilde{\theta}\|_{H^1}^2-g_0\frac{\delta}{4}\|\widetilde{u_2}\|_{L^2}^2\le 0.\label{:}
\end{align}
Due to the above choice of $\delta$, we obtain 
\begin{align*}
	\frac{1}{2} \Big(\|\widetilde{u}\|_{H^1}^2 + \|\widetilde{\theta}\|_{H^1}^2\Big) - \delta (\widetilde{u_2},\widetilde{\theta}) \ge 0.
\end{align*}
or 
$$
\frac12 (\|\widetilde{u}\|_{H^1}^2+\|\widetilde{\theta}\|_{H^1}^2) \le \|\widetilde{u}\|_{H^1}^2+\|\widetilde{\theta}\|_{H^1}^2-\delta(\widetilde{u_2},\widetilde{\theta}) \le \frac32 (\|\widetilde{u}\|_{H^1}^2+\|\widetilde{\theta}\|_{H^1}^2).
$$
For any $0 \le s \le t$, integrating (\ref{:}) in time leads to
\begin{align*}
	&\frac12 (\|\widetilde{u}(t)\|_{H^1}^2+\|\widetilde{\theta}(t)\|_{H^1}^2) + \int_{s}^t (\nu\|\partial_2\widetilde{u}\|_{H^1}^2+\eta\|\partial_1\widetilde{\theta}\|_{H^1}^2-g_0\frac{\delta}{4}\|\widetilde{u_2}\|_{L^2}^2)\, d\tau \\
	&\le \frac32 (\|\widetilde{u}(s)\|_{H^1}^2+\|\widetilde{\theta}(s)\|_{H^1}^2). 
\end{align*}
Then, for any $0 \le s \le t$, we have
\beq\label{bi}
\|\widetilde{u}(t)\|_{H^1}^2+\|\widetilde{\theta}(t)\|_{H^1}^2 \le 3(\|\widetilde{u}(s)\|_{H^1}^2+\|\widetilde{\theta}(s)\|_{H^1}^2)
\eeq
and 
$$
\int_{0}^\infty (\nu\|\partial_2\widetilde{u}\|_{H^1}^2+\eta\|\partial_1\widetilde{\theta}\|_{H^1}^2-g_0\frac{\delta}{4}\|\widetilde{u_2}\|_{L^2}^2)\, d\tau \le C <\infty. 
$$
Combining with the time integral bounds from Theorem \ref{TH},
\begin{align*}
	\int_0^{\infty} \|\partial_2 u\|_{H^2}^2\,dt < \infty, \quad \int_0^{\infty} \|\partial_1 u_2\|_{L^2}^2\,dt < \infty \quad \text{and} \quad \int_0^{\infty} \|\partial_1 \theta\|_{H^2}^2\,dt < \infty,
\end{align*} 
we get
\beq\label{bi2}
\int_0^\infty (\|\widetilde{u}(t)\|_{H^1}^2+\|\widetilde{\theta}(t)\|_{H^1}^2)\,dt < \infty. 
\eeq
Finally, applying Lemma \ref{special5} to (\ref{bi}) and (\ref{bi2}) leads to 
\begin{align*}
\|\widetilde{u}(t)\|_{H^1}^2+\|\widetilde{\theta}(t)\|_{H^1}^2 \le c (1 +t)^{-1},
\end{align*}
and the asymptotic behavior, as $t\to \infty$, 
$$
t\, (\|\widetilde{u}(t)\|_{H^1}^2+\|\widetilde{\theta}(t)\|_{H^1}^2) \to 0. 
$$
This completes the proof of Theorem~\ref{TH1}.
\end{proof}

\vskip .3in

\vskip .4in


\begin{thebibliography}{89}
	
\bibitem{BPW} O. Ben Said, U. Pandey and J. Wu. \emph{The stabilizing effect of the temperature on buoyancy-
driven fluids}. Indiana Univ. Math. J., 71 (2022), 2605-2645. doi: 10.48550/arXiv.2005.11661. 
 
\bibitem{ABPW} D. Adhikari, O. Ben Said, U. Pandey and J. Wu. \emph{Stability and large-time behavior for the $2$D Boussinesq system with horizontal dissipation and vertical thermal diffusion.} Nonlinear Differential Equations and
Applications (NoDEA), 29 (2022), No.4, Paper No. 42. 
\bibitem{BrS} L. Brandolese and M.E. Schonbek, \emph{Large time decay and growth for solutions of a viscous Boussinesq system}, Trans. Amer. Math. Soc. 364 (2012), 5057-5090.


\bibitem{Blu}H. Bluestein, \emph{Severe Convective Storms and Tornadoes: Observations and Dynamics}, 
Springer, (2013).

\bibitem{CCL} A. Castro, D. C\'ordoba and D. Lear, \emph{On the asymptotic stability of stratified solutions for the 2D Boussinesq equations with a velocity damping term}, Math. Models Methods Appl. Sci. 29 (2019), 1227-1277.

\bibitem{CaoWu} C. Cao and J. Wu, \emph{Global regularity for the 2D MHD equations with mixed partial dissipation and magnetic diffusion}, Adv.  Math.  226 (2011), 1803-1822.

\bibitem{ConW}P. Constantin, J. Wu,  J. Zhao and Y. Zhu, \emph{High Reynolds number and high 
	Weissenberg number Oldroyd-B model with dissipation}, J. Evolution Equations,  special issue in honor of the 60th birthday of Professor
	Matthias Hieber, https://doi.org/10.1007/s00028-020-00616-8, in press.

\bibitem{Den} S. Denisov, \emph{Double-exponential growth of the vorticity gradient for the two-dimensional Euler equation}, Proc. Amer. Math. Soc. 143 (2015), 1199--1210.


\bibitem{DWZZ} C. R. Doering, J. Wu, K. Zhao and X. Zheng, \emph{Long time behavior of the
two-dimensional Boussinesq equations
without buoyancy diffusion}, Physica D  376/377 (2018), 144-159.

\bibitem{DWXZ} B. Q. Dong, J. Wu, X. Xu and N. Zhu, \emph{Stability and exponential decay for the 2D anisotropic Boussinesq
equations with horizontal dissipation}, Calculus of Variations and Partial Differential Equations, 60 (2021), No.3,
Paper No. 116, 21 pp.

	\bibitem{ER}
	T.M. Elgindi and F. Rousset, \emph{Global regularity for some Oldroyd-B type models}, Comm. Pure Appl. Math. 68 (2015), 2005-2021.
	

\bibitem{Hol} J.R. Holton and G.J. Hakim, \emph{An Introduction to Dynamic Meteorology}, Academic press, Oxford, UK, (2013).

\bibitem{Kis} A. Kiselev and V. Sverak, \emph{Small scale creation for solutions of the incompressible two-dimensional Euler equation}, Ann. Math. 180 (2014),  1205-1220.

\bibitem{Luk1} G. Lukaszewicz, \emph{On nonstationary flows of asymmetric fluids}, Rend. Accad. Naz. Sci. XL Mem. Mat. (5) 12 (1988): 83–97.

\bibitem{Luk2} G. Lukaszewicz, \emph{On the existence, uniqueness and asymptotic properties for solutions of flows of asymmetric fluids}, Rend. Accad. Naz. Sci. XL Mem. Mat. (5) 13 (1989): 105–120.

\bibitem{Luk3} G. Lukaszewicz, \emph{Micropolar Fluids Theory and Applications}, Model. Simul. Sci. Eng. Technol. Birkhäuser, Boston., (1999).

\bibitem{Lad} O.A. Ladyzhenskaya, \emph{Solution “in the large” of the nonstationary boundary value problem for the Navier-Stokes system with two space variables}, Comm Pure Appl Math. 12(4) (1959): 427–433.

\bibitem{Li} Y. Li, \emph{Global regularity for the viscous Boussinesq equations}, Math. Meth. Appl. Sci. 27 (2004): 363-369.

\bibitem{LL} A. Larios, E. Lunasin and E.S. Titi, \emph{Global well-posedness for the 2D Boussinesq system with anisotropic viscosity and without heat diffusion}, J. Differential Equations 255 (2013): 2636–2654.

\bibitem{LWXZZ} S. Lai, J. Wu, X. Xu, J. Zhang and Y Zhong, \emph{Optimal decay estimates for the 2D Boussinesq
equations with partial dissipation}, Journal of Nonlinear Science, 31 (2021), No.1, 16.

\bibitem{LWZ} S. Lai, J. Wu and Y. Zhong, \emph{Stability and large-time behavior of the 2D Boussinesq equations with partial
dissipation}, Journal of Differential Equations, 271 (2021), 764-796.

\bibitem{Maj} A. Majda, \emph{ Introduction to PDEs and Waves for the Atmosphere and Ocean}, Courant Lecture Notes 9, Courant Institute of Mathematical Sciences and American Mathematical Society, (2003).

\bibitem{MaBe} A. Majda and A. Bertozzi,  \emph{ Vorticity and Incompressible Flow}, Cambridge University Press, (2002).

\bibitem{Ped} J.  Pedlosky \emph{ Geophysical Fluid Dynamics}, 2nd Edition, Springer-Verlag,
Berlin Heidelberg-New York, (1987).

\bibitem{Sch0} M. Schonbek,  \emph{$L^2$ decay for weak solutions of the Navier-Stokes equations}, Arch. Ration. Mech. Anal. 88 (1985), 209--222.
	
	\bibitem{Sch} M. Schonbek and M. Wiegner, \emph{On the decay of higher-order norms of the solutions of Navier-Stokes equations}, Proc. Roy. Soc. Edinburgh Sect. A 126 (1996), 677--685.
	
	\bibitem{Stein} E.M. Stein, 
	\emph{Singular Integrals and Differentiability Properties of Functions},  Princeton Univ. Press, Princeton, New Jersey, (1970).

\bibitem{TWZZ} L. Tao, J. Wu,  K. Zhao and  X. Zheng, \emph{Stability near hydrostatic equilibrium to the 2D Boussinesq equations without thermal diffusion}, Arch. Ration. Mech. Anal. 237 (2020), 585-630.


\bibitem{Tao} T. Tao, \emph{Nonlinear Dispersive Equations: Local and Global Analysis},
CBMS regional conference series in mathematics, (2006).

\bibitem{WuZhu} J. Wu and Y. Zhu, \emph{Global solutions of 3D incompressible MHD system with mixed partial dissipation and magnetic diffusion near an equilibrium},  Adv. Math. 377 (2021), 107466.

\bibitem{Zla}
	A. Zlatos, \emph{Exponential growth of the vorticity gradient for the Euler equation on the torus}, Adv. Math. 268 (2015), 396-403.

\end{thebibliography}
\end{document}